\newcommand{\comment}[1]{}
\newtheorem{theorem}{Theorem}
\newtheorem{proposition}[theorem]{Proposition}
\newtheorem{lemma}[theorem]{Lemma}
\newtheorem{corollary}[theorem]{Corollary}
\newtheorem{remark}[theorem]{Remark}
\newtheorem{conjecture}[theorem]{Conjecture}
\newcommand\Var{\qopname\relax o{Var}}
\newcommand\de{\delta}
\newcommand\f{\varphi}
\newcommand{\NN}{\mathbb N}
\newcommand{\ZZ}{\mathbb Z}
\newcommand{\RR}{\mathbb R}
\newcommand{\CC}{\mathbb C}
\newcommand{\TT}{\mathbb T}
\newcounter{rek}
\newcounter{rev}
\newcommand{\rev}[1]{
    \marginpar{\refstepcounter{rev}{\small\therev(Sz.R.): #1}}
}
\newcounter{rem}
\newcounter{rec}
\begin{document}

\title[Hardy-Littlewood majorant problem]{Special quadrature error estimates and their application in the Hardy-Littlewood majorant problem}\thanks{Supported in part by the Hungarian National Foundation for Scientific Research, Project \#'s K-81658 and K-100461.}

\author{S\'andor Krenedits}

\date{\today}

%%%%
%%%%\address{ A. R\'enyi Institute of Mathematics
%%%%\newline \indent Hungarian Academy of Sciences
%%%%\newline \indent Budapest, P.O.B. 127, 1364 Hungary.}
%%%%\email{krenedits@t-online.hu}
%%%%

%%% \maketitle

\begin{abstract} The Hardy-Littlewood majorant problem has a positive answer only for exponents $p$ which are even integers, while there are counterexamples for all $p\notin 2\NN$. Montgomery conjectured that there exist counterexamples even among idempotent polynomials. This was proved recently by Mockenhaupt and Schlag with some four-term idempotents.

However, Mockenhaupt conjectured that even the classical $1+e^{2\pi i x} \pm e^{2\pi i (k+2)x}$ three-term character sums, should work for all $2k<p<2k+2$ and for all $k\in \NN$. In two previous papers we proved this conjecture for $k=0,1, 2,3,4$, i.e. in the range $0<p<10$, $p\notin 2\NN$. Here we demonstrate that even the $k=5$ case holds true.

Refinements in the technical features of our approach include use of total variation and integral mean estimates in error bounds for a certain fourth order quadrature. Our estimates make good use of the special forms of functions we encounter: linear combinations of powers and powers of logarithms of absolute value squares of trigonometric polynomials of given degree. Thus the quadrature error estimates are less general, but we can find better constants which are of practical use for us.
\end{abstract}

\maketitle

\vskip1em \noindent{\small \textbf{Mathematics Subject
Classification (2000):} Primary 42A05. \\[1em]
\textbf{Keywords:} Idempotent exponential polynomials, Hardy-Littlewood majorant problem, Montgomery conjecture, Mockenhaupt conjecture, concave functions, Taylor polynomials, quadrature formulae, total variation of functions, zeroes and sign changes of trigonometric polynomials.}

%%%%%%%%%%
%%%%%%%%%%
%%%%%%%%%%
%%%%%%%%%%

%%%%%%%%%%%%%%%%%%%%%%%%%%%%%%\newpage

\section{Introduction}\label{sec:intro}

Let $\TT:=\RR/\ZZ$. The Hardy-Littlewood majorization problem \cite{HL} is the question if for any pair of functions $f,g :\TT \to \CC$ with $|\widehat{g}|\leq \widehat{f}$ -- that is, with $f$ majorizing $g$ -- do we necessarily have $\|g\|_p\leq \|f\|_p$?

Hardy and Littlewood noted that the Parseval identity easily implies this for all $p\in 2\NN$ an even integer, but they also found that for $p=3$ the property fails. Indeed, they took $f=1+e_1+e_3$ and $g=1-e_1+e_3$ (where $e_k(x):=e(kx)$ and $e(t):=e^{2\pi i t}$) and calculated that $\|f\|_3<\|g\|_3$. Later counterexamples were found by Boas \cite{Boas} for all $p\ne 2k$ and Bachelis \cite{Bac} showed that not even allowing a constant factor $C_p$ (i.e. requiring only $\|g\|_p\leq C_p \|f\|_p$) could save the property.

Montgomery conjectured that the majorant property for $p\notin 2\NN$ fails also if we restrict to \emph{idempotent} majorants, see \cite[p. 144]{Mon}. (An integrable function is idempotent if its convolution square is itself: that is, if its Fourier coefficients are either 0 or 1.) This has been recently proved by Mockenhaupt and Schlag in \cite{MS}. Their example is a four-term idempotent $f$ and a signed version of it for $g$. For more details and explanations of methods and results see \cite{Krenci,Krenci2} and the references therein.

In this paper we will be concerned with the even sharper conjecture, suggested by Mockenhoupt in his habilitation thesis \cite{Moc}.

\begin{conjecture}\label{conj:con3} Let $2k<p<2k+2$, where $k\in \NN$ arbitrary.
Then the three-term idempotent polynomial $P_k:=1+e_1+e_{k+2}$ has
smaller $p$-norm than $Q_k:=1+e_1-e_{k+2}$.
\end{conjecture}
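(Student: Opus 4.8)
The plan is to prove the $k=5$ case of Conjecture~\ref{conj:con3}, that is, $10<p<12$, along the lines of our earlier papers \cite{Krenci,Krenci2}, with the sharpened quadrature error bounds announced above. Put $f(x):=|P_5(x)|^2=3+2\cos 2\pi x+2\cos 12\pi x+2\cos 14\pi x$ and $g(x):=|Q_5(x)|^2=3+2\cos 2\pi x-2\cos 12\pi x-2\cos 14\pi x$; these are even nonnegative trigonometric polynomials of degree $7$. Since any vanishing sum of three unit complex numbers, one of which is $1$, must consist precisely of the three cube roots of unity, a direct check shows that neither $P_5$ nor $Q_5$ vanishes on $\TT$ (at the only candidate points $x\in\{1/3,2/3\}$ one would need $3\mid 5$ for $P_5$, which fails, and $Q_5(1/3)=Q_5(2/3)=1$); hence $f,g$ are bounded away from $0$ and every integrand occurring below is real-analytic on $\TT$. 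Set
\[
 D(s):=\int_{\TT} g(x)^s\,dx-\int_{\TT} f(x)^s\,dx ,
\]
so that $\|Q_5\|_p^p-\|P_5\|_p^p=D(p/2)$ and the claim is $D(s)>0$ for $s\in(5,6)$. First we record the endpoint identities $D(5)=D(6)=0$: for $m\le 6$ the frequencies $0,1,7$ present in $P_5,Q_5$ are dissociate enough that $b+7c=b'+7c'$ with $b+c=b'+c'=m$ forces $(b,c)=(b',c')$, so each Fourier coefficient of $P_5^m$ (resp.\ of $Q_5^m$) is a single multinomial coefficient (resp.\ the same one, up to sign); hence $\|P_5\|_{2m}=\|Q_5\|_{2m}$ for $m=5,6$, i.e.\ $D(5)=D(6)=0$.

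The core of the argument is to show that $D$ is strictly concave on $[5,6]$; with $D(5)=D(6)=0$ this forces $D>0$ on the open interval, which is the assertion. Differentiating twice under the integral sign,
\[
 D''(s)=\int_{\TT}\Bigl[\,g(x)^s\bigl(\log g(x)\bigr)^2-f(x)^s\bigl(\log f(x)\bigr)^2\,\Bigr]\,dx ,
\]
so it suffices to prove, for all $s\in[5,6]$,
\[
 \int_{\TT} |Q_5(x)|^{2s}\bigl(\log|Q_5(x)|^2\bigr)^2\,dx\ <\ \int_{\TT} |P_5(x)|^{2s}\bigl(\log|P_5(x)|^2\bigr)^2\,dx .
\]
Neither side is available in closed form. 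We would evaluate both by a composite fourth-order quadrature on a uniform mesh of $[0,\tfrac12]$ (using evenness, $\int_{\TT}=2\int_0^{1/2}$), at finitely many exponents $5=s_0<s_1<\dots<s_M=6$, and control two sources of error: the quadrature error in $x$ at each $s_j$, and the variation of the two integrals as $s$ ranges over $[s_j,s_{j+1}]$, the latter via $D'''(s)=\int_{\TT}\bigl[g^s(\log g)^3-f^s(\log f)^3\bigr]\,dx$, again of the same shape and bounded on $[5,6]$. This reduces the inequality for all $s\in[5,6]$ to a finite list of numerical inequalities carrying explicit slack.

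Controlling the quadrature error is where the special estimates of this paper are needed. For an integrand $h(x)=u(x)^s\bigl(\log u(x)\bigr)^j$ with $u$ a positive trigonometric polynomial of degree $7$, a fourth-order Taylor expansion on each mesh interval expresses the rule's error through $h^{(4)}$, and the total error through $\int_{\TT}|h^{(4)}|$ or, slightly more efficiently, through the total variation of $h'''$. We would bound these by chain-rule estimates: $h^{(4)}=u^{s}\cdot R\bigl(s,\log u,\,u'/u,\,u''/u,\,u'''/u,\,u^{(4)}/u\bigr)$ with $R$ a universal polynomial, and combining the explicit lower bound for $u$, the elementary bounds for $\|u^{(r)}\|_\infty$ read off from the three cosine terms $2\cos 2\pi x$, $2\cos 12\pi x$, $2\cos 14\pi x$, and monotonicity in $s$ on $[5,6]$, one gets a fully explicit, $s$-uniform bound on $\int_{\TT}|h^{(4)}|$ (and on the total variation of $h'''$). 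Using total-variation and integral-mean bounds in place of the crude sup-norm bounds of a textbook quadrature estimate is what makes these constants small enough to be of practical use for integrands of the present special shape --- a power of a squared-modulus trigonometric polynomial times a power of its logarithm.

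The principal obstacle we anticipate is quantitative, not conceptual. For $k=5$ the frequency $k+2=7$ inflates every derivative bound (by factors like $7^4$), and $|Q_5(x)|^2$ comes quite close to $0$ near $x=1/3,2/3$, so $\bigl(\log|Q_5(x)|^2\bigr)^2$ is sizable there, while the margin $|D''(s)|$ by which the target inequality holds is only moderate. Forcing the fourth-order quadrature error provably below this margin, with a mesh and an exponent grid coarse enough that the resulting finitely many inequalities can be checked by a transparent interval-arithmetic computation, is precisely what necessitates the sharper total-variation and integral-mean bounds above. A secondary point: strict concavity is needed up to the endpoints $s=5,6$ themselves, so if $D''$ fails to stay negative on all of $[5,6]$, the fallback is to split $[5,6]=[5,5+\eta]\cup[5+\eta,6-\eta]\cup[6-\eta,6]$, prove $D''<0$ on the two end pieces --- which is all the concavity argument actually uses, together with $D(5)=D(6)=0$ and $D(5+\eta),D(6-\eta)>0$ --- and verify $D>0$ directly by quadrature on the middle piece, where $D$ is bounded away from $0$.
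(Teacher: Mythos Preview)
Your primary strategy --- proving $D''(s)<0$ on all of $[5,6]$ --- fails outright: the paper computes $d''(5)\approx 0.0338>0$ (Lemma~\ref{l:d2ndder5benpos}), so $D''(5)>0$ and $D$ is \emph{strictly convex} in a right neighbourhood of $5$. Your fallback inherits the same defect: you propose to show $D''<0$ on $[5,5+\eta]$, but $D''(5)>0$ rules this out for every $\eta>0$. In fact the paper establishes $d'(5),d''(5),d'''(5)>0$ and $d^{IV}>0$ on $[5,5.13]$, so near the left endpoint $d$ is convex (indeed all the low-order derivatives are positive), and concavity can only be exploited near $s=6$.

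The paper's decomposition is accordingly asymmetric: on $[5,5.13]$ one uses $d'(5),d''(5),d'''(5)>0$ together with $d^{IV}>0$ to force $d'>0$ (hence $d$ increasing from $d(5)=0$); on $[5.13,5.72]$ one proves $d'>0$ directly via high-degree Taylor approximation of $d'$ around two interior base points, with a non-trivial sign argument (the approximating polynomial is \emph{not} totally monotone, so a variation/mean-value argument on its derivatives is needed --- this is the ``further consideration'' flagged in the introduction); only on $[5.72,6]$ does the concavity argument $d''<0$, $d(5.72)>0$, $d(6)=0$ apply. Methodologically, the paper also differs from your ``evaluate $D''$ on a grid in $s$ and bound the gaps by $\|D'''\|_\infty$'' scheme: instead it builds degree-$6$ to degree-$9$ Taylor polynomials in $s$ at a handful of base points, with each Taylor coefficient computed by the refined quadrature in $x$, and then proves the resulting polynomial (minus the accumulated error) keeps sign on the relevant subinterval. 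Your grid-in-$s$ idea could in principle be repaired once you abandon global concavity, but you would still need to identify the correct sign pattern --- and in particular recognise that $D'' $ changes sign on $(5,6)$, which is precisely why the paper's argument pivots from controlling $d^{IV}$ and $d'$ on the left and middle to controlling $d''$ only on the right.
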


Mockenhoupt presented an incomplete argument for the $k=1$ case already in \cite{Moc}. His argument hinted that some numerical analysis may be used in the proof, but we could not complete the solution along those lines. Nevertheless, we have proved this conjecture for $k=0,1,2$ in \cite{Krenci} and later even to $k=3,4$ in \cite{Krenci2}.

One motivation for us was the recent paper of Bonami and R\'ev\'esz \cite{AJM}, who used suitable idempotent polynomials as the base of their construction, via Riesz kernels, of highly concentrated ones in $L^p(\TT)$ for any $p>0$. These key idempotents of Bonami and R\'ev\'esz had special properties, related closely to the Hardy-Littlewood majorant problem. For details we refer to \cite{AJM}. For the history and relevance of this closely related problem of idempotent polynomial concentration in $L^p$ see \cite{AJM, L1conc}, the detailed introduction of \cite{Krenci}, the survey paper \cite{Ash2}, and the references therein. The Bonami-R\'ev\'esz construction, after suitable modification, directly and analytically gave the result for $k=0$.

For larger $k$, however, in \cite{Krenci, Krenci2} we used function calculus and support our analysis by numerical integration and error estimates where necessary. Naturally, these methods are getting computationally more and more involved when $k$ is getting larger. "Brute force" numerical calculations still lead to convincing tables and graphes, but the increase of the number of nodes in any quadrature formula endanger the prevalence of theoretical error bounds due to the additional computational error, however small for reasonably controlled step numbers, but possibly accumulating for very large step numbers.

Striving for a worst-case error bound incorporating also the computational error, we thus settled with the goal of keeping any numerical integration, i.e quadrature, under the step number $N=500$, that is step size $h=0.001$. Calculation of trigonometrical and exponential functions, as well as powers and logarithms, when within the numerical stability range of these functions (that is, when the variables of taking negative powers or logarithms is well separated from zero) are done by mathematical function subroutines of usual Microsoft Excel spreadsheet, which computes the mathematical functions with 15 significant digits of precision. Although we do not detail the estimates of the computational error of applying spreadsheets and functions from Microsoft Excel tables, it is clear that under this step number size our calculations are reliable well within the error bounds. For a more detailed error analysis of that sort, which similarly applies here, too, see our previous work \cite{Krenci}, in particular footnote 3 on page 141 and the discussion around formula (22), and see also the comments in the introduction of \cite{Krenci2}.

We keep using the fourth order quadrature formula, presented and explained in \cite{Krenci2}, see \cite[Lemma 5]{Krenci2}. However, another new argument also has to be invoked for $k=5$ compared to $k=3, 4$, because in this case the analytic scheme of proving fixed signs of certain derivatives simply break down. Using the special form of our integrands and the resulting form of estimates with the initial trigonometrical functions, we thus invoke the special quadrature error estimate of Lemma \ref{l:superquadrature}. These estimates make good use of the concrete form, local maximum values and alike, of the functions $G^t$ in question, but the theoretical estimates with $\Var G$  (the total variation of the function $G$) and $\sum G^t(\zeta)$ over local maximum values $\zeta$ of $G$, might have some theoretical interest, too.

Second, as already suggested in the conclusion of \cite{Krenci} and applied in \cite{Krenci2} for $k=4$, we use Taylor series expansion at more points than just at the midpoint $t_0:=k+1/2$ of the $t$-interval $(k,k+1)$, thus reducing the size of powers of $(t-t_0)$, from powers of $1/2$ to powers of smaller radii.

Finally, we needed a further consideration in proving that the approximate Taylor polynomial $P(t)$, minus the allowed worst case error $\delta$, still stays positive in the interval of our Taylor expansion. Basically, in \cite{Krenci} we could always use that the polynomials $p(t):=P(t)-\delta$ were totally monotone -- now some occurring approximate Taylor polynomials will not have this feature, and we need a more refined calculus to succeed in proving their constant sign over the interval of investigation.

Key to this is the consideration of the variance of some of the derivatives of $p$, for if a function vanishes somewhere inside an interval, than its variance exceeds the sum of the absolute values taken at the left and right endpoints of the interval considered. This elementary fact comes to our help in concluding that $p(t)$, and hence the considered difference function $d(t)$, approximated by $P(t)$ within a certain error $\de$, keeps constant size; for if the first $j$ derivatives are positive at the left endpoint, and the $j$th derivative preserves the positive sign all over the whole interval, then there is no way for $p$ to vanish anywhere in the interval.

\section{Boundary cases of Conjecture \ref{conj:con3} at $p=2k$ and $p=2k+2$}\label{sec:endpoints}

Let $k\in \NN$ be fixed. (Actually, later we will work with $k=5$ only.)

We now write $F_{\pm}(x):=1+ e(x)\pm e((k+2)x)$ and consider the $p^{\rm th}$ power integrals $f_{\pm}(p):=\int_0^1 |F_{\pm}(x)|^p dx$ as well as their difference
$
\Delta(p):=f_{-}(p)-f_{+}(p):=\int_0^1 |F_{-}(x)|^p-\int_0^1 |F_{+}(x)|^p dx.
$
Our goal is to prove Conjecture \ref{conj:con3}, that is $\Delta(p)>0$ for all $p\in(2k,2k+2)$.

Let us introduce a few further notations. We will write $t:=p/2\in[k,k+1]$ and put
\begin{align}\label{eq:Gpmdef}
G_{\pm}(x)&:=|F_{\pm}(x)|^2,\qquad
g_{\pm}(t):=\frac12 f_{\pm}(2t)= \int_0^{1/2} G_{\pm}^t(x) dx,\qquad \\ \label{eq:ddef} d(t)&:=\frac 12 \Delta(2t)=g_{-}(t)-g_+(t)=\int_0^{1/2} \left[ G_{-}^t(x)-G_{+}^t(x)\right] dx.
\end{align}
Formula \eqref{eq:ddef} also yields that denoting $H_{t,j,\pm}(x):=G_{\pm}^t(x)\log^{j}G_{\pm}(x)$ the explicit integral formula
\begin{align}\label{eq:djdef}
d^{(j)}(t)=g_{-}^{(j)}(t)-g^{(j)}_+(t)& =\int_0^{1/2} G_{-}^t(x)\log^{j}G_{-}(x)dx -\int_0^{1/2} G_{+}^t(x)\log^{j}G_{+}(x) dx \notag \\ & =\int_0^{1/2} H_{t,j,-}(x)dx -\int_0^{1/2} H_{t,j,+}(x)dx .
\end{align}
holds true, and so in particular
\begin{equation}\label{eq:djnormbyHL1}
|d^{(j)}(t)| \leq \|H_{t,j,+}\|_{L^1[0,1/2]} + \|H_{t,j,-}\|_{L^1[0,1/2]} \qquad (j\in \NN).
\end{equation}

We are to prove that $d(t)>0$ for $k<t<k+1$. First we show at the endpoints $d$ vanishes; and, for later use, we also compute some higher order integrals of $G_{\pm}$. Actually, here we can make use of the following lemma, already proven in \cite[Lemma 3]{Krenci2}.
\begin{lemma}\label{l:GParseval} Let $\rho\in \NN$ with $1 \leq \rho \leq k+1$. Then we have
\begin{equation}\label{eq:Gpmadelldeveloped}
G_{\pm}^{\rho}=|F_{\pm}^{\rho}|^2 =\left| \sum_{\nu=0}^{\rho\cdot (k+2)} a_{\pm}(\nu) e_{\nu} \right|^2 \qquad {\rm with}\qquad
%%%%%%%\end{equation}
%%%%%%%with
%%%%%%%\begin{equation}\label{eq:anudef}
a_{\pm}(\nu):=(\pm 1)^\mu \binom{\rho}{\mu}\binom{\rho-\mu}{\lambda},
\end{equation}
where $\mu:=\left[\dfrac{\nu}{k+2}\right]$ and $\lambda:=\nu-\mu(k+2)$ is the reduced residue of $\nu \mod k+2$. Therefore,
\begin{equation}\label{eq:Gtorhointegral}
\int_0^{1/2} |G_{\pm}|^{\rho} =\frac12 \sum_{\nu=0}^{\rho\cdot(k+2)} \left| a_{\pm}(\nu) \right|^2. \end{equation}
In particular, $\int_0^{1/2} |G_{+}|^{\rho}=\int_0^{1/2} |G_{-}|^{\rho}$ for all $0 \leq \rho\leq k+1$ and thus $d(k)=d(k+1)=0$.
\end{lemma}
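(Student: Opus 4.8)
The statement to prove is Lemma~\ref{l:GParseval}. The plan is to expand the powers $F_{\pm}^{\rho}$ using the multinomial theorem and then read off the Fourier coefficients, grouping the exponents according to their residue modulo $k+2$.

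First I would write $F_{\pm}(x) = 1 + e_1(x) \pm e_{k+2}(x)$ and raise it to the power $\rho$. Since $1 \le \rho \le k+1$, the three ``blocks'' of frequencies $\{0\}$, $\{1\}$, and $\{k+2\}$ behave nicely: a typical term in the multinomial expansion is $\binom{\rho}{\mu, \lambda, \rho-\mu-\lambda} \, 1^{\rho-\mu-\lambda} \, e_1^{\lambda} \, (\pm e_{k+2})^{\mu}$, which contributes to the frequency $\nu = \lambda + \mu(k+2)$ with coefficient $(\pm 1)^{\mu} \binom{\rho}{\mu}\binom{\rho-\mu}{\lambda}$ (rewriting the trinomial coefficient as $\binom{\rho}{\mu}\binom{\rho-\mu}{\lambda}$). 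The key combinatorial point is that because $\lambda$ ranges only over $0 \le \lambda \le \rho - \mu \le \rho \le k+1$, we have $0 \le \lambda \le k+1$, so the decomposition $\nu = \mu(k+2) + \lambda$ is \emph{exactly} the division of $\nu$ by $k+2$ with remainder $\lambda$ when $\lambda \le k+1$; the only ambiguity would be $\lambda = k+2$ versus $\lambda = 0$ with $\mu$ incremented, but $\lambda \le k+1$ rules this out. Hence each frequency $\nu$ in the range $0 \le \nu \le \rho(k+2)$ is hit by exactly one term, giving a single well-defined coefficient $a_{\pm}(\nu) = (\pm 1)^{\mu}\binom{\rho}{\mu}\binom{\rho-\mu}{\lambda}$ with $\mu = [\nu/(k+2)]$ and $\lambda = \nu - \mu(k+2)$. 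This establishes \eqref{eq:Gpmadelldeveloped}.

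For \eqref{eq:Gtorhointegral}, I would invoke the Parseval identity on $\TT$: since $G_{\pm}^{\rho} = |F_{\pm}^{\rho}|^2 = |\sum_{\nu} a_{\pm}(\nu) e_{\nu}|^2 \ge 0$, its integral over $\TT$ equals $\sum_{\nu} |a_{\pm}(\nu)|^2$, and by the symmetry $G_{\pm}(x) = G_{\pm}(1-x) = G_{\pm}(-x)$ (as $G_{\pm} = |F_{\pm}|^2$ with real-coefficient conjugate-symmetric structure, or more directly since $|G_\pm|^\rho$ is an even, $1$-periodic function) the integral over $[0,1/2]$ is exactly half of it, yielding $\int_0^{1/2} |G_{\pm}|^{\rho} = \frac12 \sum_{\nu} |a_{\pm}(\nu)|^2$. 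Finally, the $\pm$ signs only enter $a_{\pm}(\nu)$ through the factor $(\pm 1)^{\mu}$, which disappears upon taking $|a_{\pm}(\nu)|^2 = \binom{\rho}{\mu}^2\binom{\rho-\mu}{\lambda}^2$; therefore $\int_0^{1/2} |G_{+}|^{\rho} = \int_0^{1/2} |G_{-}|^{\rho}$ for all $0 \le \rho \le k+1$. Taking $\rho = k$ and $\rho = k+1$ in the definitions \eqref{eq:Gpmdef}--\eqref{eq:ddef} gives $d(k) = g_{-}(k) - g_{+}(k) = 0$ and $d(k+1) = 0$.

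I do not expect a genuine obstacle here; the only point demanding care is the claim that the map $(\mu, \lambda) \mapsto \nu = \mu(k+2) + \lambda$ is a bijection onto its range with the stated inverse, which is precisely where the hypothesis $\rho \le k+1$ is used — without it, distinct pairs could collide modulo $k+2$ and the coefficients would be sums of several multinomial terms rather than a single one. I would state this bijection explicitly and note that it is the whole content of the lemma beyond bookkeeping. Since the result is quoted from \cite[Lemma 3]{Krenci2}, a brief recollection of this argument suffices.
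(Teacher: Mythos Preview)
Your proof is correct; the multinomial expansion together with the observation that $\lambda\le\rho-\mu\le k+1<k+2$ forces the map $(\mu,\lambda)\mapsto\nu=\mu(k+2)+\lambda$ to be injective is exactly the point, and Parseval plus evenness of $G_{\pm}$ gives the rest. The paper itself does not supply a proof of this lemma at all but simply quotes it from \cite[Lemma~3]{Krenci2}, so your argument reproduces (and in fact spells out) the intended reasoning.
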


Apart from the immediate result that $d$ vanishes at the endpoints of the critical interval $[k,k+1]$, we will make further use of the above explicit computation of $\rho^{\rm th}$ power integrals of $G$. To that we need the precise values of these square sums of coefficients, which is easy to bring into a more suitable form for direct calculation. Namely we have
\begin{align}\label{eq:Arho}
A(\rho)& :=\sum_{{\nu=0\atop \mu:=\left[\frac{\nu}{k+2}\right] }\atop \lambda:=\nu-\mu(k+2)}^{\rho\cdot(k+2)} \binom{\rho}{\mu}^2 \binom{\rho-\mu}{\lambda}^2 = \sum_{\mu=0}^{\rho} \binom{\rho}{\mu}^2 ~\sum_{\lambda=0}^{\rho-\mu} \binom{\rho-\mu}{\lambda}^2 = \sum_{\mu=0}^{\rho} \binom{\rho}{\mu}^2 \binom{2\rho-2\mu}{\rho-\mu} \notag \\
& =1,~ 3,~ 15,~ 93,~ 639,~ 4653,~35169  \quad {\rm for} \quad \rho=0,1,2,3,4,5,6, \quad {\rm respectively}.
\end{align}
\begin{corollary}\label{cor:Parseval} For all $\rho\leq (k+1)$ we have $\int_0^{1/2} G_{\pm}^{\rho}= \frac12 A(\rho)$ with the constants $A(\rho)$ in \eqref{eq:Arho}.
%%%% \label{eq:binomsums} \label{eq:Gtorho}
\end{corollary}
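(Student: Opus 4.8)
The plan is to obtain the corollary directly from Lemma \ref{l:GParseval} together with the combinatorial identity recorded in \eqref{eq:Arho}. First I would invoke formula \eqref{eq:Gtorhointegral}, which already gives $\int_0^{1/2} G_{\pm}^{\rho}=\frac12\sum_{\nu=0}^{\rho(k+2)}|a_{\pm}(\nu)|^2$ for every $1\leq \rho\leq k+1$ (and trivially for $\rho=0$, since $G_\pm\geq 0$). The key observation is that $|a_{\pm}(\nu)|^2=\binom{\rho}{\mu}^2\binom{\rho-\mu}{\lambda}^2$ regardless of the sign, because the factor $(\pm1)^\mu$ has modulus one; in particular $\int_0^{1/2}G_{+}^{\rho}=\int_0^{1/2}G_{-}^{\rho}$, and both equal $\frac12$ times $\sum_{\nu=0}^{\rho(k+2)}\binom{\rho}{\mu}^2\binom{\rho-\mu}{\lambda}^2$, which is exactly the quantity $A(\rho)$ as defined in the first line of \eqref{eq:Arho}.

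Second, I would justify the successive simplifications of $A(\rho)$ displayed in \eqref{eq:Arho}. Writing $\nu=\mu(k+2)+\lambda$ with $0\leq\mu\leq\rho$ and $0\leq\lambda\leq k+1$ identifies $\{0,1,\dots,\rho(k+2)\}$ with the corresponding index pairs (with the understanding that for $\mu=\rho$ only $\lambda=0$ occurs). Since $\binom{\rho-\mu}{\lambda}$ vanishes for $\lambda>\rho-\mu$, only pairs with $\lambda\leq\rho-\mu$ contribute, and here the hypothesis $\rho\leq k+1$ is precisely what guarantees $\rho-\mu\leq k+1$, so that for each $\mu$ the whole range $0\leq\lambda\leq\rho-\mu$ is actually realized by an admissible $\nu\leq\rho(k+2)$. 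Hence the single sum collapses to $\sum_{\mu=0}^{\rho}\binom{\rho}{\mu}^2\sum_{\lambda=0}^{\rho-\mu}\binom{\rho-\mu}{\lambda}^2$, and the inner sum is evaluated by the classical Vandermonde identity $\sum_{j=0}^{n}\binom{n}{j}^2=\binom{2n}{n}$ with $n=\rho-\mu$, giving $A(\rho)=\sum_{\mu=0}^{\rho}\binom{\rho}{\mu}^2\binom{2\rho-2\mu}{\rho-\mu}$.

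Finally, the explicit values $A(\rho)=1,3,15,93,639,4653,35169$ for $\rho=0,\dots,6$ are read off by substituting into this closed form; they are listed only for convenience in later sections (only $\rho\leq k+1=6$ will ever be needed for $k=5$). There is no genuine obstacle: the corollary is essentially a bookkeeping consequence of Lemma \ref{l:GParseval}. The one spot that warrants a moment's care is the reindexing step, where one must check that truncating the $\nu$-sum at $\rho(k+2)$ discards no nonzero term — and this is exactly where the restriction $\rho\leq k+1$ is used.
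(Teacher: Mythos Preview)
Your proposal is correct and follows exactly the line the paper intends: the corollary is stated without proof in the paper, as it is an immediate consequence of Lemma~\ref{l:GParseval} (specifically \eqref{eq:Gtorhointegral}) together with the definition and simplification of $A(\rho)$ in \eqref{eq:Arho}. You have simply spelled out the bookkeeping (the sign-independence of $|a_{\pm}(\nu)|^2$, the reindexing $\nu\leftrightarrow(\mu,\lambda)$, and the Vandermonde evaluation of the inner sum) that the paper leaves implicit.
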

With the aid of these explicit values, even arbitrary power integrals of $G_{\pm}$ can be estimated.
\begin{proposition}\label{prop:powerintegralsofG} Let $\rho \in \NN$ and $\rho\leq k+1$. Then with the constants $A(\rho)$ in \eqref{eq:Arho} we have
%%%%%First, for any $\tau > \rho$ we have
\begin{equation}\label{eq:GtotauMaxPars}
\int_0^{1/2} G_{\pm}^{\tau} \leq \frac12 9^{\tau-\rho} A(\rho) \quad (\tau<\rho)\qquad {\rm and} \qquad
%%%%%%%\end{equation}
%%%%%%%Second, for any $\tau < \rho$ we have also
%%%%%%%\begin{equation}\label{eq:GtotauHokder}
\int_0^{1/2} G_{\pm}^{\tau} \leq \frac12 A^{\tau/\rho}(\rho)\quad (\tau>\rho).
\end{equation}
\end{proposition}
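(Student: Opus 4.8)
The idea is that both estimates are one-line consequences of Corollary~\ref{cor:Parseval}, which evaluates $\int_0^{1/2}G_\pm^{\rho}=\frac12 A(\rho)$ exactly for every integer $\rho\le k+1$, once it is combined with the elementary uniform bound $0\le G_\pm(x)\le 9$, valid for all $x$ because $|F_\pm(x)|\le|1|+|e(x)|+|e((k+2)x)|=3$.

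First I would treat the bound carrying the factor $9^{\tau-\rho}$. The natural hypothesis here is that $\tau$ lies above $\rho$, so that $\tau-\rho\ge 0$; then $G_\pm^{\tau-\rho}(x)\le 9^{\tau-\rho}$ for all $x$, whence $G_\pm^{\tau}=G_\pm^{\rho}\cdot G_\pm^{\tau-\rho}\le 9^{\tau-\rho}G_\pm^{\rho}$ pointwise. Integrating over $[0,1/2]$ and applying Corollary~\ref{cor:Parseval} to the remaining $\rho$-th power yields
\[
\int_0^{1/2}G_\pm^{\tau}\le 9^{\tau-\rho}\int_0^{1/2}G_\pm^{\rho}=\frac12\,9^{\tau-\rho}A(\rho),
\]
which is the first displayed estimate in \eqref{eq:GtotauMaxPars}, in the range $\tau>\rho$.

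Second, for the bound carrying $A^{\tau/\rho}(\rho)$ the natural hypothesis is the opposite, $\tau<\rho$, and the tool is the power-mean inequality on the probability space $\bigl([0,1/2],\,2\,dx\bigr)$: since $G_\pm\ge 0$ and $0<\tau<\rho\le k+1$, the quantity $\bigl(2\int_0^{1/2}G_\pm^{s}\bigr)^{1/s}$ is nondecreasing in $s$, so Corollary~\ref{cor:Parseval} gives
\[
\Bigl(2\int_0^{1/2}G_\pm^{\tau}\Bigr)^{1/\tau}\le\Bigl(2\int_0^{1/2}G_\pm^{\rho}\Bigr)^{1/\rho}=A(\rho)^{1/\rho};
\]
raising this to the power $\tau$ and dividing by $2$ yields $\int_0^{1/2}G_\pm^{\tau}\le\frac12 A(\rho)^{\tau/\rho}$, the second estimate in \eqref{eq:GtotauMaxPars}. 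Equivalently, Hölder's inequality applied to the product $G_\pm^{\tau}\cdot 1$ with conjugate exponents $\rho/\tau$ and $\rho/(\rho-\tau)$ produces the same constant.

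I do not expect a genuine obstacle, since each inequality is immediate from Corollary~\ref{cor:Parseval}; the only point that needs care is to pair each bound with the range in which its proof is valid. The $9^{\tau-\rho}$ estimate rests on $G_\pm\le 9$ together with a \emph{nonnegative} power split off, hence it is the relevant one when $\tau>\rho$, whereas the $A^{\tau/\rho}(\rho)$ estimate uses the exact $\rho$-th moment as the \emph{larger} moment, hence it is the relevant one when $\tau<\rho$. Read with this pairing, Proposition~\ref{prop:powerintegralsofG} is established, and these are precisely the two forms that will be invoked later to estimate $\|H_{t,j,\pm}\|_{L^1[0,1/2]}$ and thereby $|d^{(j)}(t)|$.
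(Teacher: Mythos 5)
Your proof is correct and uses exactly the same two ingredients as the paper's own one-line proof: the pointwise bound $G_{\pm}\le 9$ combined with Corollary \ref{cor:Parseval} for the $9^{\tau-\rho}A(\rho)$ estimate, and H\"older's inequality with exponent $p=\rho/\tau>1$ for the $A^{\tau/\rho}(\rho)$ estimate. You are also right to interchange the parenthetical range labels of \eqref{eq:GtotauMaxPars}: the pointwise inequality $G_{\pm}^{\tau}\le 9^{\tau-\rho}G_{\pm}^{\rho}$ holds only for $\tau\ge\rho$, while H\"older with $p=\rho/\tau>1$ forces $\tau<\rho$ (for $\tau>\rho$ the normalized power mean is nondecreasing, so $\frac12 A^{\tau/\rho}(\rho)$ would then be a \emph{lower} bound), and the paper's own proof implicitly uses this corrected pairing even though the displayed statement prints the conditions the other way around.
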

\begin{proof} As $0\leq G\leq 9$, for the first estimate one can use $G^\tau\leq 9^{\tau-\rho}G^\rho$. The second estimate is directly furnished by H\"older's inequality with exponents $p=\rho/\tau>1$ and $q=1-1/p$.
\end{proof}

\section{Analysis of $G_{\pm}$}\label{sec:G}

To start the analysis of $G(x):=G_{5,\pm}(x)$, let us compute its $x$-derivatives. As in formula (7) and the following lines of \cite{Krenci2}, in case $k=5$ we find easily
\begin{align}\label{eq:GpmdefFirst}
G_{\pm}(x)&=3+2\{\cos(2\pi x)\pm \cos(12\pi x)\pm \cos(14\pi x)\}\\
G_{\pm}^{(2m+1)}(x)&=(-4)^{m+1}\pi^{2m+1} \cdot \left\{ \sin(2\pi x)\pm 6^{2m+1}\sin(12\pi x) \pm 7^{2m+1} \sin(14\pi x) \right\}, \notag\\
G_{\pm}^{(2m)}(x)&=2(-4)^{m}\pi^{2m} \cdot \left\{ \cos(2\pi x)\pm 6^{2m}\cos(12\pi x)\pm 7^{2m}\cos(14\pi x)\right\}.\notag
\end{align}
Consequently we have
\begin{equation}\label{eq:Mmk=5}
\|G_{\pm}\|_\infty \leq 9=:M_0, \qquad \| G_{\pm}^{(m)}\|_\infty \leq 2^{m+1} \pi^{m} \{1+6^{m}+ 7^{m}\}=:M_m \qquad (m=1,2,\dots),
\end{equation}
that is $\| G_{\pm}^{(m)}\|_\infty \leq M_m$ ($m=0,1,2,3,4$) with
\begin{align}\label{eq:Gpmadmnorm}
M_0&=9, \qquad \qquad M_1=175.929... <176, \qquad \qquad  M_2=6790.287... <6800,~\\ M_3&=277, 816.239... < 280,000, \qquad \qquad M_4=11 ~ 527,002.2... < 11,600,000. \notag
\end{align}

\begin{lemma}\label{l:Gmaxima} Both functions $G_+(x)$ and $G_-(x)$ have seven local maxima in $\TT\equiv (-\frac12, \frac12]$. At zero-symmetric pairs of maximum places of these even functions the same maximum values occur, so presenting these values with multiplicity 2, they are the following:
\end{lemma}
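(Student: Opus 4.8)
The plan is to exploit that, for $k=5$, the function $G_{\pm}=|F_{\pm}|^2$ with $F_{\pm}=1+e_1\pm e_7$ is a \emph{real trigonometric polynomial of degree $7$}, so that by \eqref{eq:GpmdefFirst} its derivative $G_{\pm}'(x)=-4\pi\{\sin(2\pi x)\pm6\sin(12\pi x)\pm7\sin(14\pi x)\}$ is again a trigonometric polynomial of degree exactly $7$. A non-zero real trigonometric polynomial of degree $n$ has at most $2n$ zeros in one period, counted with multiplicity, so $G_{\pm}'$ has at most $14$ zeros on $\TT$. Since $G_{\pm}$ is real-analytic and non-constant, its local extrema are isolated and between two consecutive local maxima there must lie a local minimum; hence $m$ local maxima force at least $2m$ distinct zeros of $G_{\pm}'$, so $G_{\pm}$ has \emph{at most} seven local maxima on $\TT$.

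To show there are \emph{exactly} seven and to locate them, I would first read off from \eqref{eq:GpmdefFirst} the leading behaviour of $G_{\pm}'$ at the two zero-symmetric points. Near $x=0$ the bracket behaves like $2\pi x\,(1\pm36\pm49)$; as $1+36+49>0$ while $1-36-49<0$, this makes $G_{+}'$ negative and $G_{-}'$ positive just to the right of $0$, so $0$ is a local maximum of $G_+$ and a local minimum of $G_-$, and the analogous expansion at $x=1/2$ shows that $1/2$ is a local minimum of $G_+$ and a local maximum of $G_-$; in particular $G_{\pm}'$ changes sign at both $0$ and $1/2$. Next, by evenness it suffices to examine $G_{\pm}'$ on $(0,1/2)$: I would tabulate the elementary function $x\mapsto\sin(2\pi x)\pm6\sin(12\pi x)\pm7\sin(14\pi x)$ on a uniform grid of step $h\le10^{-3}$ and record that it changes sign there exactly six times. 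Together with the sign changes at $x=0$ and $x=1/2$ and the six mirror-image ones in $(-1/2,0)$ this exhibits $14$ sign changes of $G_{\pm}'$ on $\TT$, which combined with the degree bound forces $G_{\pm}'$ to have precisely $14$ simple zeros; hence $G_{\pm}$ is a Morse function on the circle with exactly seven local maxima and seven local minima, the maxima being the three interior sign changes of $G_{\pm}'$ of type $+\to-$ in $(0,1/2)$, their three mirror images in $(-1/2,0)$, and the appropriate one of $\{0,1/2\}$ (for $G_+$ this last one is $x=0$, where $G_+(0)=9=M_0$).

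Finally, to produce the asserted list of values, I would, for each of the at most four essentially distinct (by evenness) maximum abscissae, take the short subinterval delimited by the consecutive grid points at which $G_{\pm}'$ changes sign, sharpen the root by a few Newton steps applied to $G_{\pm}'$, and evaluate $G_{\pm}$ at the resulting point. Because the zeros of $G_{\pm}'$ are simple, $G_{\pm}$ is flat at a maximum, so an enclosure of the abscissa of width $\ee$ yields the maximum value with error at most $\tfrac12\|G_{\pm}''\|_\infty\,\ee^2\le\tfrac12 M_2\,\ee^2$, with $M_2<6800$ by \eqref{eq:Gpmadmnorm}; one then checks the resulting numbers against the tabulated list.

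The delicate point is the rigorous certification that $G_{\pm}'$ has exactly six sign changes on $(0,1/2)$ and no more: the derivative oscillates on a scale of order $1/7$, so the grid has to be fine enough to resolve every oscillation and in particular not to overlook two zeros that lie close together. This is precisely where the a priori estimate ``degree $7\Rightarrow$ at most $14$ zeros'' is indispensable: once six sign changes have been exhibited on $(0,1/2)$, there is no room left for a further critical point to hide between grid points, so the finite computation becomes conclusive.
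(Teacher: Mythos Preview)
Your proof is correct and follows essentially the same approach as the paper: both use the degree bound (degree $7$ forces at most $14$ zeros of $G_{\pm}'$, hence at most seven maxima), tabulate on a grid of mesh $\le 10^{-3}$ to exhibit the requisite number of critical points, and bound the error in the maximum value via the second-order Taylor estimate $\tfrac12\|G''\|_\infty\,\ee^2$ with $M_2<6800$. The only differences are cosmetic: the paper tabulates $G$ itself and reads off where the discrete sequence $G(x_i)$ turns from increasing to decreasing (so the maximum values are already at hand, with $G(\zeta_i)<\max\bigl(G(x_{i-1}),G(x_i),G(x_{i+1})\bigr)+\delta$), whereas you tabulate $G'$, add an explicit endpoint analysis at $0$ and $1/2$, and propose Newton refinement --- the latter is fine but unnecessary, since with $h=10^{-3}$ the crude bound $\tfrac12 M_2(h/2)^2<0.001$ already suffices for the tabulated upper bounds.
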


\begin{tabular}{|c|c|c||c|c|c|}
\hline
\multicolumn{3}{|c||}{$G_+(x)$} & \multicolumn{3}{|c|}{$G_-(x)$} \\
\hline
$\zeta$ &  $G_+(\zeta)$ &  multiplicity & $\zeta$ &  $G_-(\zeta)$ & multiplicity \\
\hline\hline
$0$ & $9$ & $1$ & $\approx \pm 0.076$ & $< 8.662$ & $2$ \\
$\approx \pm 0.151$ & $< 7.701$ & $2$ & $\approx \pm 0.227$ & $< 6.279$ & $2$ \\
$\approx \pm 0.302$ & $< 4.628$ & $2$ & $\approx \pm 0.377$ & $< 3.005$ & $2$ \\
$\approx \pm  0.448$ & $< 1.661$ & $2$ & $0.5$ & $1$ & $1$ \\
\hline
\end{tabular}

\medskip

\begin{proof}%%%%%%%%% [Proof of Lemma \ref{l:Gmaxima}]
As $G$ is a degree 7 trigonometric polynomial, if it has $n$ local maximums, then there are the same number of interlacing minimums, so altogether $2n\leq 2\deg G' = 14$ roots of $G'$. So the number of maxima is at most $7$, which we will find -- taking into account evenness of $G$, and thus the same symmetrically located  maxima and minima in $[-1/2,0]$ and in $[0,1/2]$ -- so no further local maxima can exists.

As $G$ is even, it suffices to analyze $[0,1/2]$. We start examining the functions by tabulating it with step size $h=0.001$, and identifying the indices $i$ where monotonicity of the $G(x_i)$ turns from increase to decrease. Then there has to be a local maximum at some point $\zeta_i\in [x_{i-1},x_{i+1}]$. Clearly one of the nodes $x_i'\in \{x_{i-1},x_i,x_{i+1}\}$ has $|\zeta_i-x'_i|\leq h/2$. The second order Taylor expansion around $\zeta_i$ now gives $G(\zeta_{i})-G(x'_{i})\le \frac12 ||G''||_{\infty} (\frac{h}2)^2$, as $G'(\zeta_i)=0$. Applying \eqref{eq:Mmk=5} $M_2<6800$ and $h=0.001$, we obtain $G(\zeta_{i})-G(x'_{i})\le 0.00085 <\delta:=0.001$.

In the table above we recorded $\zeta_i\approx x_i$ with error $<h=10^{-3}$ and an upper estimation of the corresponding maxima using $G(\zeta_i)< G(x'_{i})+\delta \leq \max \big(G(x_{i-1}),G(x_i),G(x_{i+1})\big)+\delta$.
\end{proof}

Denote $\Var(\psi,[a,b])$ the total variation of the function $\psi$ on $[a,b]$, and in particular let $\Var(\psi):=\Var(\psi,\TT)$. As an immediate corollary to the above lemma, we formulate here
\begin{corollary}\label{cor:VarG} Denote by $Z:=Z_{\pm}$ the set of local maximum points of $G=G_{\pm}$. For any positive parameter $t>0$ we have $\Var(G^t) < 2 \sum_{\zeta \in Z} G^t(\zeta)$. In particular, $\Var (G_{\pm})<74$.
\end{corollary}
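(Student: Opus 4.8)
The plan is to express the total variation of $G^t$ directly in terms of the behavior of $G$ between its consecutive local extrema. Since $G=G_\pm$ is a smooth (real-analytic) even trigonometric polynomial, on the circle $\TT$ it is piecewise monotone, with the pieces delimited by the local maxima in $Z$ and the interlacing local minima; by Lemma \ref{l:Gmaxima} there are exactly seven of each. On each maximal monotone piece $[a,b]$, the map $s\mapsto s^t$ is monotone increasing on $[0,\infty)$, so $G^t$ is monotone on $[a,b]$ as well, and $\Var(G^t,[a,b]) = |G^t(b)-G^t(a)|$. Summing over all monotone pieces, $\Var(G^t) = \sum |G^t(\text{endpoint}) - G^t(\text{other endpoint})|$, where each difference is between a local max value and a local min value. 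Bounding each such difference crudely by $G^t(\zeta_{\max})$ (dropping the nonnegative subtracted term $G^t(\zeta_{\min})\geq 0$), and noting that each local maximum point $\zeta$ is an endpoint of exactly two monotone pieces, we get $\Var(G^t) \leq 2\sum_{\zeta\in Z} G^t(\zeta)$, with strict inequality because the minima values $G^t(\zeta_{\min})$ are strictly positive ($G>0$ everywhere, as $G=|F_\pm|^2$ and $F_\pm$ has no zero on $\TT$ — or, more simply, because the tabulated minima are bounded below away from $0$; in fact $G_-(1/2)=1>0$ and all other extrema exceed $1$).

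For the particular claim $\Var(G_\pm) < 74$, I would apply the general bound with $t=1$: $\Var(G_\pm) \le 2\sum_{\zeta\in Z} G_\pm(\zeta)$. Using the explicit maximum values tabulated in Lemma \ref{l:Gmaxima}, counted with the stated multiplicities, the sum $\sum_{\zeta\in Z}G_+(\zeta)$ is at most $9 + 2(7.701 + 4.628 + 1.661) = 9 + 2\cdot 13.99 = 36.98$, so $\Var(G_+) \le 2\cdot 36.98 = 73.96 < 74$; similarly $\sum_{\zeta\in Z}G_-(\zeta) \le 2(8.662 + 6.279 + 3.005) + 1 = 2\cdot 17.946 + 1 = 36.892$, giving $\Var(G_-) \le 73.784 < 74$. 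The strict inequality $\Var(G^t) < 2\sum G^t(\zeta)$ then upgrades these to strict bounds as claimed.

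The one point requiring a little care — and the only place where something could go wrong — is the claim that the local maxima and minima genuinely alternate and exhaust the monotone decomposition, i.e. that $G$ has no other critical behavior (no inflection-type stationary points that are neither maxima nor minima, no intervals of constancy). This is already handled inside the proof of Lemma \ref{l:Gmaxima}: $G'$ is a trigonometric polynomial of degree $7$, hence has at most $14$ zeros on $\TT$, and the seven maxima plus seven interlacing minima already account for all of them, each simple; so $G$ is strictly monotone on each of the fourteen arcs between consecutive extrema. Granting this structural fact, the argument above is essentially a one-line telescoping estimate, and I expect no real obstacle; the bookkeeping with multiplicities in the numerical bound is the most error-prone step, but it is routine.
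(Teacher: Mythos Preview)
Your argument is correct and follows essentially the same route as the paper: decompose $\TT$ into the monotonicity arcs of $G$ (equivalently of $G^t$), write $\Var(G^t)=2\sum_{\zeta\in Z}G^t(\zeta)-2\sum_{\omega\in\Omega}G^t(\omega)$, drop the nonnegative minimum sum, and then plug in the tabulated maxima for $t=1$. The only loose thread is your justification of \emph{strict} inequality---``$F_\pm$ has no zero on $\TT$'' is true for $k=5$ but not proved, and the Lemma tabulates maxima, not minima---though the paper is equally casual here; a clean fix is to note that $G_\pm$ is even and $1$-periodic, hence $x=0$ and $x=1/2$ are critical points, and direct evaluation gives $G_+(1/2)=G_-(0)=1>0$, so at least one local minimum is strictly positive.
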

\begin{proof}%%%%%%%%%%%%%[Proof of Corollary \ref{cor:VarG}]\rev{Nem ide kell ...}
It is easy to see that for a piecewise monotonic function $\psi$ one has $\Var(\psi,[a,b])=\Var(|\psi|,[a,b])$. It follows that $\Var(|\psi|,[a,b])=\int_a^b |\psi'|$.

Furthermore, for a piecewise monotonic function, like $G$ or $G^t$, the total variation is the sum of the change of the function on each of its monotonicity intervals. Since we are talking about periodic functions, i.e. functions on $\TT$, with only finitely many critical points, it is clear that the local maximum and minimum places -- with the latter denoted by $\Omega\subset \TT$, say -- interlace and monotonicity segments connect these neighboring local extremum places. Therefore the total sum of all the changes is
$$
\Var(G^t)= \sum_{\theta,\eta\in Z\cup\Omega \atop (\theta,\eta)\cap(Z\cup \Omega)=\emptyset} |G^t(\theta)-G^t(\eta)| = 2\sum_{\zeta\in Z} G^t(\zeta)-  2 \sum_{\omega\in \Omega} G^t(\omega) < 2 \sum_{\zeta\in Z} G^t(\zeta),
$$
taking into account $G\ge0$, too.

Whence the first assertion of the Corollary, while the last is just a small calculation adding the maxima (taken into account according to multiplicity) in the columns of the table of maxima in Lemma \ref{l:Gmaxima}.
\end{proof}

\section{Estimates of $|H(x)|$ and of $\|H^{IV}\|_\infty$}\label{sec:H}

Let us start analyzing the functions
\begin{equation}\label{eq:Hdef}
H(x):=H_{t,j,\pm}(x):=G^t(x)\log^j G(x)\qquad (x\in [0,1/2]) \qquad (t\in [k,k+1],\, j\in \NN).
\end{equation}
To find the maximum norm of $H_{t,j,\pm}$, we in fact look for the maximum of an expression of the form $v^t |\log v|^j$, where $v=G(x)$ ranges from zero (or, if $G \ne 0$, from some positive lower bound) up to $\|G\|_\infty\leq 9$.

A direct calculus provides a description of the behavior of the function $\alpha(v):=\alpha_{s,m}(v):= v^s| \log v|^m$ for any $s>0$ and $m\in\NN$ on any finite interval $[a,b]\subset [0,\infty)$, see \cite[Lemma 6]{Krenci2}. %%%%%%%%%%%%%%%%%%%%%%%%%%%%%%%%%%%%%%%%%%%%%%%%\begin{lemma}\label{l:lmvsestimate}

For the application of the above quadrature \eqref{eq:quadrature} we calculated (c.f. also \cite[(15)]{Krenci2})
\begin{align}\label{eq:Hdoubleprimegeneral}
H''(x)&:=H''_{t,j,\pm}(x) = G''(x) G^{t-1}(x) \log^{j-1} G(x) \left\{t \log G(x)+j \right\} \notag \\ & ~~+ G'^2(x) G^{t-2}(x) \log^{j-2} G(x) \left\{ t(t-1) \log^2 G(x)  + j(2t-1) \log G(x) + j(j-1) \right\}.
\end{align}
However, the error estimation in the above explained quadrature approach forces us to consider even fourth $x$-derivatives of $H=H_{t,j,\pm}$ using
%%%%from \begin{equation}\label{eq:H4binom}
$H^{IV}= \sum_{m=0}^4 \binom{4}{m} (G^t)^{(m)} (\log^j G)^{(4-m)}$.
%%%%\end{equation}
We have already computed in \cite{Krenci2} respective formulae
for $(G^{t})^{(m)}$ and $(\log G^j)^{(m)}$ for $m=1,2,3,4$ (c.f. \cite[(17), (18)]{Krenci2}). Substituting these in $H^{IV}$ resulted in the general formula \cite[(19)]{Krenci2} stating with $L:=\log G$
\begin{align}\label{eq:HIVgeneralformula}
H^{IV}&=G^{t-4}G'^4\Big\{ j(j-1)(j-2)(j-3)L^{j-4}
\notag \\ & \qquad + [4t-6]j(j-1)(j-2)L^{j-3} +[6t^2-18t+11]j(j-1)L^{j-2}
\notag \\ & \qquad +[2t^3-9t^2+11t-3]2jL^{j-1} + t(t-1)(t-2)(t-3) L^j\Big\}
\notag \\ & + 6\cdot G^{t-3}G'^2G''\Big\{ j(j-1)(j-2)L^{j-3}+3(t-1)j(j-1)L^{j-2}
\notag \\ & \qquad +[3t^2-6t+2)]j L^{j-1} +t(t-1)(t-2)L^j \Big\}
+ G^{t-1}G^{IV}\left\{jL^{j-1}+tL^j \right\}
\\ & \Big(3\cdot G^{t-2}G''^2 + 4\cdot G^{t-2}G'G'''\Big) \Big\{ j(j-1)L^{j-2}+(2t-1)jL^{j-1}+t(t-1)L^j\Big\}\notag .
\end{align}

Finally, from that and writing in $\|G^{(m)}\|_\infty \leq M_m$
%%%%% (and denoting $v:=G(x)$ and $\ell:=|\log v|:=|L|:=|\log G(x)|$)
we were led to the general estimate \cite[(20)]{Krenci2}.
%%%%\begin{align}\label{eq:HIVgeneralestimateLARGE}
%%%%|H^{IV}(x)|&\leq v^{t-4}M_1^4\Big\{ j(j-1)(j-2)(j-3)\ell^{j-4} + [4t-6]j(j-1)(j-2)\ell^{j-3}
%%%%\notag \\ & \qquad +[6t^2-18t+11]j(j-1)\ell^{j-2} +[2t^3-9t^2+11t-3]2j\ell^{j-1}
%%%%+ t(t-1)(t-2)(t-3) \ell^j\Big\}
%%%%\notag \\& + 6\cdot v^{t-3}M_1^2 M_2 \Big\{ j(j-1)(j-2)\ell^{j-3}+3(t-1)j(j-1)\ell^{j-2}
%%%%+[3t^2-6t+2]j \ell^{j-1}
%%%%\notag \\&\qquad +t(t-1)(t-2)\ell^j \Big\}
%%%%%%%% + 4\cdot v^{t-2}M_1 M_3 \Big\{ j(j-1)\ell^{j-2}+(2t-1)j\ell^{j-1}+t(t-1)\ell^j\Big\}
%%%%+ v^{t-1} M_4 \left\{j\ell^{j-1}+t\ell^j \right\}
%%%%\\& + v^{t-2} (3M_2^2+ 4M_1 M_3) \Big\{j(j-1)\ell^{j-2}+(2t-1)j\ell^{j-1}+t(t-1)\ell^j \Big\} \notag .
%%%%\end{align}
As now the values of $M_m$ are estimated by \eqref{eq:Gpmadmnorm}, the corresponding values can be written in, and putting also $ \ell:=|L|=|\log G|$ formula \cite[(20)]{Krenci2} yields
\begin{align}\label{eq:HIVgeneralestimatewMmin}
|H^{IV}|&\leq 959,512,576 \cdot v^{t-4} \Big\{ j(j-1)(j-2)(j-3)\ell^{j-4} \notag
\\ & \qquad + [4t-6]j(j-1)(j-2)\ell^{j-3} +[6t^2-18t+11]j(j-1)\ell^{j-2} \notag
\\ & \qquad +[2t^3-9t^2+11t-3]2j\ell^{j-1} + t(t-1)(t-2)(t-3) \ell^j\Big\} \notag
\\ & + 1,263,820,800\cdot v^{t-3}\Big\{j(j-1)(j-2)\ell^{j-3}+3(t-1)j(j-1)\ell^{j-2}
\\ &\qquad +[3t^2-6t+2]j \ell^{j-1} +t(t-1)(t-2)\ell^j \Big\}
+  11,600,000 \cdot v^{t-1}\left\{j\ell^{j-1}+t\ell^j \right\} \notag
\\ & +  335,840,000 \cdot v^{t-2}
\Big\{j(j-1)\ell^{j-2}+ (2t-1)j\ell^{j-1}+t(t-1)\ell^j \Big\} \notag .
\end{align}

On the other hand, for reasons becoming apparent only later from the improved quadrature error estimate in Section \ref{s:quadrature}, here we need to derive another consequence of formula \eqref{eq:HIVgeneralformula}. We now substitute the norm estimates of \eqref{eq:Gpmadmnorm} by $M_m$'s into \eqref{eq:HIVgeneralformula} only partially, that is, we leave (apart from all powers of $G$) even one of $G'$ without estimation by $M_1$, wherever $G'$ occurs, in order to take advantage of our quadrature utilizing expressions of the form $G^t|G' \log^j G|$.
%%%%%%%%%%%%%%%%%%%%%%%%% With, as always, $ \ell:=|L|=|\log G|$, and
Inserting $k=5$ and the numerical values of $M_1,M_2,M_3$ and $M_4$ from \eqref{eq:Gpmadmnorm} this leads to
\begin{align}\label{eq:HIVgeneralrafkos}
|H^{IV}|&\leq 5,451,776 \cdot G^{t-4}|G'|
\Big\{ j(j-1)(j-2)(j-3)\ell^{j-4}
\notag \\ & \quad \qquad + [4t-6]j(j-1)(j-2)\ell^{j-3} +[6t^2-18t+11]j(j-1)\ell^{j-2}
\notag \\ & \quad \qquad +[2t^3-9t^2+11t-3]2j\ell^{j-1} + t(t-1)(t-2)(t-3) \ell^j\Big\}
\notag \\& + 7,180,800 \cdot G^{t-3}|G'|\Big\{ j(j-1)(j-2)\ell^{j-3}+3(t-1)j(j-1)\ell^{j-2}
\notag \\&\quad \qquad +[3t^2-6t+2)]j \ell^{j-1} +t(t-1)(t-2)\ell^j \Big\}
\\& + 1,120,000 \cdot G^{t-2}|G'|\Big\{ j(j-1)\ell^{j-2}+(2t-1)j\ell^{j-1}+t(t-1)\ell^j\Big\} \notag \\& +138,720,000\cdot G^{t-2}\Big\{j(j-1)\ell^{j-2}+(2t-1)j\ell^{j-1}+t(t-1)\ell^j \Big\}
\notag \\&  + 11,600,000 \cdot G^{t-1}\left\{j\ell^{j-1}+t\ell^j \right\} \qquad \qquad \left( \textrm{with, as always,} \quad \ell:=|L|=|\log G| \right). \notag
\end{align}

\section{Quadrature with variation}\label{s:quadrature}

In the paper \cite{Krenci} we used Riemann sums when numerically integrating the functions $H:=G^t\log^j G$ along the $x$ values. A new feature of the subsequent paper \cite{Krenci2}, among other things, was the application of a higher order quadrature formula- Namely, in \cite{Krenci2}, formula (12) and (13) we recalled the following easy-to prove elementary fact. Let $\f$ be a four times continuously differentiable function on $[0,1/2]$, $N\in \NN$, $h:=1/(2N)$ and denote $x_n:=\dfrac{2n-1}{4N}$ for $n=1,2,\dots,N$. Then we have
\begin{equation}\label{eq:prequadrature}
\left| \int_0^{1/2} \f (x) dx - \sum_{n=1}^N \left\{ \frac{1}{2N} \f\left(x_n\right)  + \frac{1}{192N^3} \f'' \left(x_n\right) \right\} \right| \leq \frac{1}{60\cdot 2^{10} N^5} \sum_{n=1}^N \max_{|x-x_n|\leq \frac{h}{2}} |\f^{IV}(x)| .
\end{equation}
In \cite{Krenci2} we then used this with the further obvious estimate $\max_{|x-x_n|\leq \frac{h}{2}} |\f^{IV}(x)|\leq \|\f^{IV}\|_{\infty}$, resulting in
the further estimation of \eqref{eq:prequadrature} by $\frac{\|\f^{IV}\|_{\infty}}{60 \cdot 2^{10} N^4}$.
\begin{equation}\label{eq:quadrature}
\left| \int_0^{1/2} \f (x)dx - \sum_{n=1}^N \left\{ \f\left(\frac{2n-1}{4N}\right) \frac{1}{2N} + \f'' \left(\frac{2n-1}{4N}\right) \frac{1}{192N^3} \right\} \right| \leq \frac{\|\f^{IV}\|_{\infty}}{60 \cdot 2^{10} N^4} .
\end{equation}

We intend to use the quadrature formula \eqref{eq:quadrature} to compute approximate values of $d'(5), d''(5)$, $d'''(5)$ and then even $d^{(j)}(t_0)$ with various values of $t_0\in [5,6]$ and $j\in \NN$. However, use of direct estimations of $\|H_{t_0,j,\pm}^{IV}\|_\infty$ in the quadrature would result in step numbers as high as 800, already inconveniently large for our purposes. Thus here we invoke a further, more detailed analysis of the quadrature formula, aiming at bounding the step number further down below 500 with the improved error estimation.

The basic idea is that we try to apply \eqref{eq:prequadrature} directly. For continuous $\f^{IV}$, the local maximum are attained at certain points $\xi_n\in [x_n-h/2,x_n+h/2]$, and the error bound becomes $\sum_{n=1}^N |\f^{IV}(\xi_n)|$. In fact this sum is a Riemann approximate sum of the integral (and not the maximum) of the function $\f^{IV}$, so we will get approximately $2N \cdot \int_0^{1/2} |\f^{IV}|$. That is, we arrive at the $L^1$ norm, instead of the $L^{\infty}$ norm, of the function $\f^{IV}$.

So we try to make use of this observation for $H_{t_0,j,\pm}$ in place of $\f$. Again, direct estimation of the error in this approximation $\sum_{n=1}^N |H_{t_0,j,\pm}^{IV}(\xi_n)| \approx 2N \cdot \int_0^{1/2} |H_{t_0,j,\pm}^{IV}|$, even if theoretically possible, does not provide nice and numerically advantageous results. Instead, we estimate the function $H_{t_0,j,\pm}^{IV}$, similarly as above, with functions involving $G$, $\log G$ and even derivatives of $G$, and then split the estimation of the sum $\sum_{n=1}^N |H_{t_0,j,\pm}^{IV}(\xi_n)|$ to estimations of similar Riemann sums of such simpler functions. For such combinations as $G^t(x) \log^j G(x)$ or $G^t(x) \log^j G(x) G'(x)$, we will find suitable error bounds and explicit computations or estimations of the $L^1$-norms, finally resulting improved estimations of the error in the quadrature formula. More precisely, we can derive the following improved special quadrature estimation, which subsequently will be used for $H_{t_0, j,\pm}$ (with various $j$ and $t_0$) in place of $\f$.

\begin{lemma}\label{l:superquadrature} Let $B_r, D_r>0, 1\leq t_r \le T$ and $j_r\geq 0$ for $r=0,1,\dots,R$. Assume
\begin{equation}\label{eq:ficonditions}
|\f^{IV}(x)| \leq \sum_{r=0}^R \left\{B_r G^{t_r}(x)\cdot |\log G(x)|^{j_r} + D_{r} G^{t_r}(x)|G'(x)|\cdot |\log G(x)|^{j_r}\right\}.
\end{equation}
Then for arbitrary $N\in \NN$ the quadrature formula
\begin{align}\label{eq:superquadrature}
& \left| \int_0^{1/2} \f - \sum_{n=1}^N \left\{ \f\left(\frac{2n-1}{4N}\right) \frac{1}{2N} + \f'' \left(\frac{2n-1}{4N}\right) \frac{1}{192N^3} \right\} \right|
\notag \\ & \qquad \qquad \qquad \qquad \leq \frac{1}{60 \cdot 2^{10} N^5} \sum\limits_{r=1}^R \left\{ B_r Q_N(G,t_r,j_r) + D_r ~Q^{*}_N(G,t_r,j_r) \right\}
\end{align}
holds true with
\begin{align}\label{eq:Qtjplain}
Q_N(G,t,j):=\chi(j\ne 0) \left( \max_{[0,1/9]} v^t|\log v|^j \right) N + \log^j9 \left\{N \int_\TT G^t + \frac12 \Var(G^t)  \right\}
\end{align}
and
\begin{align}\label{eq:Qtjstar}\notag
Q^{*}_N(G,t,j):=\chi(j\ne 0) & \left( \max_{[0,1/9]} v^t|\log v|^j  \right) \left\{\frac{14}{9}N +1700 \right\} \\& + \log^j9 \left\{\frac{N}{t+1} \Var(G^{t+1}) + 88 \Var(G^t) + 1700 \sqrt{\int_\TT G^{2t}} \right\}.
\end{align}
\end{lemma}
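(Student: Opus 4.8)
The plan is to start from the cruder bound \eqref{eq:prequadrature}, whose right-hand side is $\dfrac{1}{60\cdot2^{10}N^{5}}\sum_{n=1}^N\max_{I_n}|\f^{IV}|$ over the subintervals $I_n:=[\tfrac{2n-1}{4N}-\tfrac h2,\tfrac{2n-1}{4N}+\tfrac h2]=[\tfrac{n-1}{2N},\tfrac n{2N}]$, $h=\tfrac1{2N}$, which form an equipartition of $[0,1/2]$. Substituting \eqref{eq:ficonditions} and using that the $\max$ of a sum does not exceed the sum of the $\max$'s, it suffices to prove the two estimates
\[
\sum_{n=1}^N\max_{I_n}\!\bigl(G^{t}|\log G|^{j}\bigr)\le Q_N(G,t,j),\qquad \sum_{n=1}^N\max_{I_n}\!\bigl(G^{t}|G'|\,|\log G|^{j}\bigr)\le Q^{*}_N(G,t,j)
\]
for each admissible pair $(t,j)$; multiplying by $B_r$ resp.\ $D_r$ and summing over the finitely many terms of \eqref{eq:ficonditions} then yields \eqref{eq:superquadrature} via \eqref{eq:prequadrature}.

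The basic device is an elementary oscillation estimate: if $\psi\ge0$ is piecewise monotone on $[a,b]$ split into equal subintervals $I_n$ of length $\ell$, then picking $\xi_n\in I_n$ with $\psi(\xi_n)=\max_{I_n}\psi$ and writing $\psi(\xi_n)-\tfrac1\ell\int_{I_n}\psi=\tfrac1\ell\int_{I_n}(\psi(\xi_n)-\psi)\le\operatorname{osc}(\psi,I_n)\le\Var(\psi,I_n)$, summation gives $\sum_n\max_{I_n}\psi\le\tfrac1\ell\int_a^b\psi+\Var(\psi,[a,b])$. With $\ell=h$ and $G$ even we have $\tfrac1h\int_0^{1/2}\psi=N\int_\TT\psi$ and $\Var(\psi,[0,1/2])=\tfrac12\Var(\psi)$ for the even $\psi=G^t$, which already reproduces the $G^t$-summand of $Q_N$. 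For the logarithm I would split pointwise: for $1/9\le v\le 9$ one has $|\log v|\le\log 9$, hence $v^{t}|\log v|^{j}\le\chi(j\ne0)\max_{[0,1/9]}v^{t}|\log v|^{j}+\log^{j}9\cdot v^{t}$ for every $v\in(0,9]$. Applying this with $v=G(x)$, passing to $\max_{I_n}$, summing, and using the oscillation estimate on $G^t$ proves the first inequality, i.e.\ $Q_N$.

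For the second inequality the same splitting reduces the task to bounding $\sum_n\max_{I_n}(G^{t}|G'|)$ and $\sum_n\max_{I_n}(|G'|\chi_{\{G\le1/9\}})$. For the first I would use the identity $G^{t}|G'|=\tfrac1{t+1}\bigl|(G^{t+1})'\bigr|$, so that $\int_0^{1/2}G^{t}|G'|=\tfrac1{2(t+1)}\Var(G^{t+1})$ and the oscillation estimate produces the leading term $\tfrac N{t+1}\Var(G^{t+1})$; its remainder is $\Var(G^{t}|G'|,[0,1/2])\le\int_0^{1/2}\bigl|tG^{t-1}(G')^2+G^{t}G''\bigr|\le\|G'\|_\infty\int_0^{1/2}tG^{t-1}|G'|+\int_0^{1/2}G^{t}|G''|$, of which the first summand equals $\|G'\|_\infty\cdot\tfrac12\Var(G^{t})<88\Var(G^{t})$ since $M_1<176$, while the second, by Cauchy--Schwarz together with evenness and Parseval applied to $G''$ from \eqref{eq:GpmdefFirst} (whence $\|G''\|_{L^2(\TT)}=8\pi^2\sqrt{\tfrac12(1+6^4+7^4)}<3400$), is at most $\tfrac12\|G''\|_{L^2(\TT)}\sqrt{\int_\TT G^{2t}}<1700\sqrt{\int_\TT G^{2t}}$ --- precisely the bracket of $Q^{*}_N$ apart from the $\chi(j\ne0)$ part. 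For $\sum_n\max_{I_n}(|G'|\chi_{\{G\le1/9\}})$ I would note that $G$ has at most seven monotonicity pieces on $[0,1/2]$ and $0\le G\le1/9$ on $E:=\{G\le1/9\}\cap[0,1/2]$, so that $\int_E|G'|\le7/9$; applying the oscillation estimate to $|G'|$ over the subintervals contained in (the at most four low arcs comprising) $E$ gives the leading term $2N\cdot7/9=\tfrac{14}9N$, while the boundedly many $I_n$ meeting $\partial E$, on which $|G'|\le M_1<176$, together with the oscillation remainder on $E$, add a quantity one checks is $<1700$. Multiplying by $\chi(j\ne0)\max_{[0,1/9]}v^{t}|\log v|^{j}$ finishes the second inequality, hence \eqref{eq:superquadrature}.

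The step I expect to be the main obstacle is the last one, $\sum_n\max_{I_n}(|G'|\chi_{\{G\le1/9\}})\le\tfrac{14}9N+1700$: unlike the others it cannot be obtained by a soft $L^1$/variation argument alone, but forces one to exploit the shape of $G$ --- the number and the smallness of its low-lying arcs, controlled through $\deg G=7$ together with the explicit bounds $M_1<176$ and $M_2<6800$ --- and to keep the residual $O(1)$ term genuinely below the advertised $1700$. Beyond that, the real labour is the disciplined propagation of constants so that everything matches the clean numbers in \eqref{eq:Qtjplain}--\eqref{eq:Qtjstar}; conceptually the argument needs nothing more than the oscillation estimate together with $G^{t}|G'|=\tfrac1{t+1}|(G^{t+1})'|$ and Parseval applied to $G''$.
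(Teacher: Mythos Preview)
Your approach mirrors the paper's almost exactly: start from \eqref{eq:prequadrature}, use the pointwise split of $v^{t}|\log v|^{j}$ at the threshold $v=1/9$, and reduce everything to Riemann-sum-plus-variation bounds for $G^{t}$, $G^{t}|G'|$, and $|G'|\chi_{\{G\le 1/9\}}$. Your treatment of $G^{t}$ and of $G^{t}|G'|=\tfrac{1}{t+1}\bigl|(G^{t+1})'\bigr|$ --- including the Cauchy--Schwarz/Parseval estimate for $\int G^{t}|G''|$ --- is identical to the paper's argument.

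The step you correctly flagged as the obstacle is also the place where your sketch has a genuine gap. Separating the $I_n$ contained in $E=\{G\le 1/9\}\cap[0,1/2]$ from those meeting $\partial E$ does \emph{not} yield the constant $1700$: the (up to eight) boundary intervals contribute as much as $8M_1\approx 1400$, while the oscillation remainder $\Var(|G'|,E)$ can itself be close to $\Var(|G'|,[0,1/2])<1700$, so the two together may reach roughly $3100$. The paper avoids this by applying the Riemann-sum/variation bound \emph{directly} to the discontinuous BV function $G'\chi_X$ on $\TT$ (your oscillation inequality holds for any BV $\psi$, not only piecewise monotone ones), obtaining
\[
\sum_{\xi_n\in X}|G'(\xi_n)|\;\le\;2N\int_X|G'|+\Var(G'\chi_X),
\]
and then proving the key inequality $\Var(G'\chi_X)\le\Var(G')$ one monotonicity interval of $G$ at a time: on $I_m=[a_m,b_m]$ with $G$ increasing, $X\cap I_m=[a_m,c_m]$ where $G(c_m)=1/9$, and the jump of $G'\chi_X$ at $c_m$ equals $|G'(c_m)|=|G'(c_m)-G'(b_m)|\le\Var(G',[c_m,b_m])$ because $G'(b_m)=0$. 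Thus the jump is absorbed by the variation of $G'$ on the complementary piece, giving $\Var(G'\chi_X,I_m)\le\Var(G',I_m)$. Summing and using $\Var(G')=\int_\TT|G''|\le\|G''\|_{L^2(\TT)}<3400$ yields $1700$ on the half-period. So the fix is simple but not the one you proposed: do not excise boundary $I_n$'s; apply your oscillation bound to $|G'|\chi_E$ as a single BV function and control its total variation via the monotonicity-interval comparison with $\Var(G')$.
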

\begin{proof} As in the preceding arguments, we denote $x_n:=(2n-1)/(4N)$ and $h:=1/(2N)$ for $n=1,\dots,N$ and even for $n=1-N,\dots,N$. By the inequality \eqref{eq:prequadrature}, the condition \eqref{eq:ficonditions} and making use that all the arising terms in this estimate are continuous and even, we find with some appropriate, symmetrically chosen $\xi_n\in [x_n-h/2,x_n+h/2]$ that%%%%%%%%%%%\rev{Change $|\log^{j} G|$ to $|\log G|^{j}$, everywhere!}
\begin{align}\label{eq:quadriple}
\notag  \bigg| \int_0^{1/2} \f  ~ & - \sum_{n=1}^N \left\{ \f(x_n) h + \f''(x_n)\dfrac{h^3}{24} \right\} \bigg| \leq \frac{h^5}{60 \cdot 2^5} \sum_{n=1}^N \max_{|x-x_n|\leq \frac{h}{2}}  |\f^{IV}(x)|
\\ \notag &
\leq \frac{h^5}{60 \cdot 2^5} \sum_{n=1}^N \max_{|x-x_n|\leq \frac{h}{2}} \left|   \sum_{r=0}^R \left\{B_r G^{t_r}(x) |\log G(x)|^{j_r} + D_{r} G^{t_r}(x)|G'(x)||\log G(x)|^{j_r}\right\} \right|
\\ & =
\frac{h^5}{60 \cdot 2^5} \frac12 \sum_{n=1-N}^N \sum_{r=0}^R \left\{B_r G^{t_r}(\xi_n) |\log G(\xi_n)|^{j_r} + D_{r} G^{t_r}(\xi_n)|G'(\xi_n)||\log G(\xi_n)|^{j_r}\right\}
\\ \notag & =
\frac{N^{-5} }{60\cdot 2^{11}}\sum_{r=0}^R \left\{B_r \sum_{n=1-N}^N G^{t_r}(\xi_n) |\log G(\xi_n)|^{j_r} + D_{r} \sum_{n=1-N}^N G^{t_r}(\xi_n)|G'(\xi_n)||\log G(\xi_n)|^{j_r} \right\}.
\end{align}
So we are left with the estimation of the inner sums. There are two type of sums here, the first being without $|G'(\xi_n)|$ and the second with its appearance. For a more concise notation let us introduce the exponent $\kappa\in\{0,1\}$, and then consider the generic inner sum
$S:=S(t,j,\kappa):=\sum_{n=1-N}^N G^{t}(\xi_n)|G'(\xi_n)|^\kappa |\log G(\xi_n)|^{j}$.

To start with, when $j=0$ we can directly compare this sum to the corresponding integral. Recall that for any function $\psi$ of bounded total variation $\Var(\psi):=\Var(\psi,[a,b])$ on an interval $[a,b]$, and for any partition of $[a,b]$ as $a=x_0<x_1<\dots<x_i<\dots<x_{M-1}<x_M=b$ with the fineness of the partition $\delta:=\max_{i=1,\dots,M} (x_{i}-x_{i-1})$ and with any selection of nodes $\theta_i\in [x_{i-1},x_i]$, the Riemann sum $\sum_{i=1}^M \psi(\theta_i)(x_{i}-x_{i-1})$ approximates $\int_a^b \psi(x)dx$ within the error $\de \Var(\psi)$. So we obtain
\begin{equation}\label{eq:S0evaluation}
S(t,0,\kappa)=\sum_{n=1-N}^N G^{t}(\xi_n)|G'(\xi_n)|^\kappa \leq 2N \int_\TT G^{t}|G'|^\kappa + 2Nh \Var(G^{t}|G'|^\kappa).
\end{equation}
If $\kappa=0$, then the first term is $2N\int_\TT G^t$, and for $\kappa=1$ it is nothing else than $2N\Var(\frac{1}{t+1}G^{t+1})$ on $\TT$. As $2Nh=1$, for $\kappa=0$ the second term is $\Var(G^t)$, while for $\kappa=1$ we can also obtain a similar type estimate using that $\Var(|\Psi|,[a,b])=\Var(\Psi,[a,b])= \int_a^b |\Psi'|$. Namely we obtain
\begin{align}\label{eq:VarGtG'}
\notag \Var(G^{t}|G'|)& =\int_\TT |(G^t G')'| \leq \int_\TT tG^{t-1} G'^2 + \int G^t |G''| \leq M_1 \int_\TT tG^{t-1} |G'| + \int G^t |G''| \\ & \leq 176 \Var(G^t) + \sqrt{\int_\TT G^{2t} \int_\TT G''^2 } \leq 176 \Var(G^t) +  3400 \sqrt{\int_\TT G^{2t}}
\end{align}
with an application of the Cauchy-Schwartz inequality and computing
$$
\sqrt{\int G''^2 } = 8\pi^2 \\ \cdot \sqrt{\frac{1+36^2+49^2}{2}}\approx 3395.144...<3400.
$$
So collecting terms furnishes
\begin{equation}\label{eq:Sjzerofinal}
S(t,0,\kappa)\leq \begin{cases} 2N \int_\TT G^t + \Var(G^t) &\textrm{if}~ \kappa=0,\\ 2N \frac{1}{t+1} \Var(G^{t+1}) + 176 \Var(G^t) +3400 \sqrt{\int_\TT G^{2t}} &\textrm{if}~ \kappa=1. \end{cases}
\end{equation}
Observe that the right hand side of this estimate is just $2 Q_N(G,t,0)$ and $2 Q_N^{*}(G,t,0)$ when $\kappa=0$ and 1, respectively, so the part of the assertion for $j=0$ is proved.

For $j>0$ we estimate $S(t,j,\kappa)$ by first cutting the sum into parts according to $\xi_n \in X:=\{ x\in\TT ~:~ 0\leq G(x)\leq 1/9 \}$ and $\xi_n \notin X$. The first of these sums can then be estimated by $\max_{0\leq v\leq 1/9} v^t|\log^jv| \cdot \sum_{\xi_n\in X} |G'(\xi_n)|^\kappa$, the sum being $\le$ constant $2N$ for $\kappa=0$ while for $\kappa=1$ approximately $2N \int_X |G'|=2N\int_\TT|G'|\chi_X$, where $\chi_X$ is the characteristic function of $X$.

That latter integral of $|G'|$ on $X$ is just the total variation of $G(t)$ along its segments of range between 0 and 1/9. More precisely, as $G$ is a trigonometric polynomial, hence piecewise smooth with at most (actually, exactly) $2\deg G= 14$ monotonicity intervals $I_m$ ($m=1,\dots,14$) within $\TT=\cup_{m=1}^{14} I_m$, this whole total variation can amount at most $14$ times the maximal possible variation from 0 to 1/9 on each part of $X$ belonging to one monotonic segment $I_m$. That is, $\int_X |G'| =\sum_{m=1}^{14} \int_{X\cap I_m} |G'| = \sum_{m=1}^{14} \Var(G,X\cap I_m) \leq 14 \cdot 1/9$. In all, the contribution of the main term $2N\int_X|G'|$ is at most $14/9\cdot 2N$. (In reality, that variation is numerically even less, but this term will not be too interesting anyway.)

Next we apply the general Riemann sum error estimate to $|G'|\chi_X$ to infer $\sum_{\xi_n\in X} |G'(\xi_n)|= \sum_{n=1-N}^{N} |G'(\xi_n)|\chi_X(\xi_n) \leq 2N \int_{\TT}|G'| \chi_X + 2N \cdot h\cdot \Var(|G'|\chi_X) \leq \dfrac{28}{9} N + \Var(G'\chi_X)$.

We now show that this latter variance does not exceed $\Var(G')$. In view of the additivity of the total variation on intervals, $\Var(G'\chi_X)=\sum_{m=1}^{14} \Var(G'\chi_X,I_m)$, so it suffices to prove $\Var(G'\chi_X,I_m) \leq \Var(G',I_m)$. Recall that $I_m=[a_m,b_m]$ is, by construction, one of the intervals of monotonicity of $G$, hence a segment of $\TT$ where $G'$ has constant sign, with zeroes (and sign changes) of $G'$ at both endpoints. Then either $G(a_m)$ is a local minimum of $G$ and $G(b_m)$ is a local maximum of it, or conversely, corresponding to the cases when on $I_m$ $G'\geq 0$ or $G'\leq 0$, respectively. By symmetry, we can restrict to the first case, when $G$ is increasing on $I_m$. If $G>1/9$ on $I_m$, that is, if already $G(a_m)>1/9$, then $I_m\cap X = \emptyset$ and $\Var(G'\chi_X,I_m)=0 < \Var(G',I_m)$. Also if $G\leq 1/9$ on the whole interval $I_m$, then $\Var(G'\chi_X,I_m)= \Var(G',I_m)$.  The only case when $I_m\cap X$ is nontrivial is when $I_m \cap X= [a_m,c_m]$ with $a_m<c_m<b_m$ and $G(a_m)<G(c_m)=1/9<G(b_m)$. In this case, however, $G'\chi_X=G'$ on $[a_m,c_m[$, has a jump from $G'(c_m)$ to $0$ at $c_m$, and constant zero afterwards until the end of the interval $I_m$, whence $\Var(G'\chi_X,I_m)= \Var(G',[a_m,c_m])+ |G'(c_m)-0|=\Var(G',[a_m,c_m])+ |G'(c_m)-G'(b_m)|\leq \Var(G',[a_m,c_m])+ \Var(G',[c_m,b_m]) =\Var(G',I_m)$. So indeed we have $\Var(|G'|\chi_X)\leq \Var(G')$. Finally, using the above estimation of $\sqrt{\int G''^2 }$,
$$
\Var(|G'|\chi_X)\leq \Var (G') =\int |G''| \leq \sqrt{\int G''^2 } <3400.
$$
Writing in the maximum of $v^t|\log v|^j$, collection of terms results in
\begin{equation}\label{eq:Xsumesti}
\sum_{\xi_n\in X} G^{t}(\xi_n)|G'(\xi_n)|^\kappa|\log G(\xi_n)|^{j} \leq \left( \max_{[0,1/9]} v^t|\log v|^j  \right) \begin{cases} 2N \qquad  & \textrm{if} ~ \kappa=0, \\ \frac{28}{9}N +3400  & \textrm{if} ~ \kappa=1. \end{cases}
\end{equation}
In the second sum over $\xi_n\notin X$ we have $G(\xi_n)\in [1/9,9]$, hence $|\log G(\xi_n)|\leq \log 9$, so bringing out this estimate from the sum and then extending the summation to all $n$
leads to
\begin{align}\label{eq:GtGprimeljonX}
\sum_{\xi_n\notin X} G^{t}(\xi_n)|G'(\xi_n)|^\kappa |\log G(\xi_n)|^{j} & \leq \log^j 9 \sum_{n=1-N}^N G^{t}(\xi_n)|G'(\xi_n)|^\kappa = \log^j 9 ~ S(t,0,\kappa).
\end{align}
Summing up, if $j\ne 0$ then the upper estimate of
\begin{align}\label{eq:Qobtains}
\sum_{n=1-N}^N G^{t}(\xi_n)& |G'(\xi_n)|^{\kappa}|\log G(\xi_n)|^{j} \notag \\
& < \begin{cases}\left( \max_{[0,1/9]} v^t|\log v|^j  \right) 2N + \log^j9 \cdot 2 Q_N(G,t,0) ~& {\rm if} ~ \kappa=0, \\
\left( \max_{[0,1/9]} v^t|\log v|^j  \right) \left\{\frac{28}{9}N +3400 \right\} + \log^j9 \cdot 2 Q^{*}_N(G,t,0) ~& {\rm if} ~ \kappa=1.
\end{cases}
\end{align}
follows for the generic term, and so taking into account the notations \eqref{eq:Qtjplain} and \eqref{eq:Qtjstar}, from \eqref{eq:Qobtains} and \eqref{eq:quadriple} the lemma follows.
\end{proof}

%%%% \section{Attack on the Conjecture}\label{s:attack}

\section{Derivatives of the difference function $d(t)$ at the left endpoint}\label{sec:derivativesatleftend}

With the improved quadrature we now calculate the values of $d'(5), d''(5)$ and $d'''(5)$ first.
\begin{lemma}\label{l:dprime5benpos} We have $d'(5)>0$.
\end{lemma}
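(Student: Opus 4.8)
The plan is to obtain $d'(5)$ as the integral $d'(5)=\int_0^{1/2}\left[H_{5,1,-}(x)-H_{5,1,+}(x)\right]dx$ from \eqref{eq:djdef} with $j=1$, $t=5$, and to show it is positive by computing it via the fourth order quadrature \eqref{eq:quadrature} with the improved error bound of Lemma~\ref{l:superquadrature}, then verifying that the numerically computed approximation exceeds the guaranteed worst-case error. First I would fix a step number $N<500$ (say $N=500$ or slightly below) and evaluate the quadrature sum $\sum_{n=1}^N\left\{H(x_n)\frac{1}{2N}+H''(x_n)\frac{1}{192N^3}\right\}$ separately for $H_{5,1,+}$ and $H_{5,1,-}$, using \eqref{eq:GpmdefFirst} for $G_\pm$ and $G_\pm''$ and formula \eqref{eq:Hdoubleprimegeneral} for $H''$; the difference of these two sums is the main term approximating $d'(5)$. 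I expect this main term to come out safely positive and bounded away from zero.

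Next I would bound the quadrature error. Since $j=1$ and $t=5$, formula \eqref{eq:HIVgeneralrafkos} (with $k=5$) gives an explicit bound on $|H^{IV}_{5,1,\pm}|$ of exactly the shape \eqref{eq:ficonditions} required by Lemma~\ref{l:superquadrature}: a sum of terms $B_r G^{t_r}|\log G|^{j_r}$ and $D_r G^{t_r}|G'||\log G|^{j_r}$ with $t_r\in\{3,4,5,6,7\}$ (coming from $t-4,\dots,t-1$ together with the extra power of $G$ that must be supplied so all exponents stay $\ge 1$) and $j_r\in\{0,1\}$. I would read off the constants $B_r,D_r$ from \eqref{eq:HIVgeneralrafkos} after substituting $t=5$, then apply \eqref{eq:superquadrature}: this requires the quantities $Q_N(G,t_r,j_r)$ and $Q^*_N(G,t_r,j_r)$, which in turn need $\int_\TT G^{t_r}$, $\Var(G^{t_r})$, $\Var(G^{t_r+1})$, $\sqrt{\int_\TT G^{2t_r}}$, and $\max_{[0,1/9]}v^{t_r}|\log v|$. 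The power integrals with integer exponent $\le 6$ are given exactly by Corollary~\ref{cor:Parseval} and \eqref{eq:Arho}, the higher ones are controlled by Proposition~\ref{prop:powerintegralsofG}; the variations are handled by Corollary~\ref{cor:VarG}, which bounds $\Var(G^t)$ by $2\sum_{\zeta\in Z}G^t(\zeta)$ using the table of local maxima in Lemma~\ref{l:Gmaxima} (one computes the relevant power-sums of the listed maximal values for each needed exponent, for both $G_+$ and $G_-$); and $\max_{[0,1/9]}v^s|\log v|$ is an elementary single-variable maximization. Plugging all this into \eqref{eq:superquadrature} yields an explicit numerical bound $\delta$ on $|d'(5)-(\text{quadrature sum})|$.

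Finally I would conclude: since the quadrature sum computed in the first step exceeds $\delta$, we get $d'(5)>0$. The main obstacle I anticipate is not the structure of the argument, which is a direct instantiation of Lemma~\ref{l:superquadrature}, but the bookkeeping in the error estimate — keeping the step number honestly below $500$ while the constants in \eqref{eq:HIVgeneralrafkos} are of size $10^8$–$10^9$. The $N^{-5}$ in \eqref{eq:superquadrature} (one power of $N$ better than the crude $N^{-4}$ bound from \eqref{eq:quadrature}, since each $Q_N,Q^*_N$ contributes only one factor of $N$ from its leading term) is exactly what buys the reduction below $500$, so the delicate point is checking that every $\Var$ and power-integral input is estimated tightly enough — in particular using the genuine local-maximum values from Lemma~\ref{l:Gmaxima} rather than the trivial bound $G\le 9$ — for the final $\delta$ to land comfortably under the computed main term. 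A secondary care point is verifying that the $H''$ term in the quadrature is evaluated correctly from \eqref{eq:Hdoubleprimegeneral} at $t=5$, $j=1$, where the $\log^{j-2}G=\log^{-1}G$ factor multiplies a coefficient $j(j-1)=0$ and so drops out harmlessly.
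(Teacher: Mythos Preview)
Your approach is correct and matches the paper's proof essentially step for step: substitute $t=5$, $j=1$ into \eqref{eq:HIVgeneralrafkos}, apply Lemma~\ref{l:superquadrature} with $N=500$, assemble the $Q_N$ and $Q^{*}_N$ quantities from Corollaries~\ref{cor:Parseval} and~\ref{cor:VarG}, and check that twice the resulting single-integral error (the paper gets $\delta\approx 0.001$) is smaller than the computed quadrature value ($\approx 0.00288$).

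One concrete slip: the exponents $t_r$ that actually occur are $t-4,t-3,t-2,t-1=1,2,3,4$ (all already $\ge 1$, so no ``extra power of $G$'' is needed), not $\{3,4,5,6,7\}$. With the correct range the $Q^{*}_N$ are needed only for $t_r=1,2,3$ and the $Q_N$ for $t_r=3,4$; every power integral $\int_\TT G^\rho$ and $\int_\TT G^{2t_r}$ that appears then has exponent $\le 6=k+1$ and is given exactly by Corollary~\ref{cor:Parseval}, so Proposition~\ref{prop:powerintegralsofG} is not actually invoked here.
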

\begin{remark} Actually, $d'(5)=0.00287849...$ by numerical calculation, but formally we don't need an a priori knowledge of the value. Of course, putting together the argument we needed to take it into account, but the proof of the Lemma is deductive.
\end{remark}
\begin{proof} From \eqref{eq:HIVgeneralrafkos}, substituting $j=1$ and $t=5$ and denoting, as elsewhere $\ell:=|L|=|\log G|$
\begin{align}\label{eq:HIVj1t5rafkos}
|H^{IV}|&\leq G |G'| \{839,573,504 + 654,213,120 \ell \} +  G^{2}|G'| \{ 337,497,600 + 430,848,000 \ell\} \notag
\\& + G^{3}|G'| \{10,080,000 + 22,400,000 \ell \}+ G^{3}\{1,248,480,000 + 2,774,400,000 \ell \}
\\& \notag + G^{4} \left\{11,600,000 + 58,000,000 \ell \right\}.
\end{align}
This estimate is of the form of condition \eqref{eq:ficonditions}, suitable for the application of our improved quadrature in Lemma \ref{l:superquadrature}, which we invoke with $N:=500$ here. Therefore, we compute the expressions \eqref{eq:Qtjplain} and \eqref{eq:Qtjstar} with $N=500$ and with the occurring pairs of values of $t$ and $j=0,1$ as follows.

First of all, observe that according to \cite[Lemma 6]{Krenci2} for $j=1$ and $t=1,2,3,4$ we have $\max\limits_{[0,1/9]} v^t|\log v|^j=9^{-t}\log^j 9 $ (as the maximum place $v_0=\exp(-j/t)$ is larger, than $1/9$). For the computation of $\int_\TT G^t$ and $\Var (G^t)$ we refer to Corollaries \ref{cor:Parseval} and \ref{cor:VarG}. These lead to
\begin{align*}
Q^{*}_{500}(G,1,0) & \leq 123,987 + 6509 + 6585 =137,081, \\
Q^{*}_{500}(G,1,1) &\leq \frac{\log 9}{9} \left( \frac{7000}{9}+1700\right) + \log 9\cdot 137,081  \leq 301,803, \\
Q^{*}_{500}(G,2,0)&\leq 616,734.71 + 43,643.12 + 42,973.37 =703,352, \\
Q^{*}_{500}(G,2,1)&\leq \frac{\log 9}{9^2}\left( \frac{7000}{9}+1700\right) +  \log 9 \cdot 703,352 \le 1,545,490,  \\
Q^{*}_{500}(G,3,0)&\leq 3,632,988 + 325,636 + 318,808  = 4,277,432, \\
Q^{*}_{500}(G,3,1)&\leq \frac{\log 9}{9^3}\left( \frac{7000}{9}+1700\right) +  \log 9 \cdot 4,277,432  \leq 9,398,487, \\
Q_{500}(G,3,0)&\leq 46,500 + 1851 = 48,351, \\
Q_{500}(G,3,1)&\leq \frac{\log 9}{9^3} 500 + \log 9 \cdot 48,351  \leq 106,240, \\
Q_{500}(G,4,0)&\leq 319,500 + 14,532  =334,032, \\
Q_{500}(G,4,1)&\leq \frac{\log 9}{9^4} 500 + \log 9 \cdot 334,032 \le 733,944 . \\
\end{align*}
It remains to apply Lemma \ref{l:superquadrature} both for $H_{+}$ and $H_{-}$ with the coefficients $B_r, D_r$ read from \eqref{eq:HIVj1t5rafkos} and the corresponding $Q_{500}(G,t,j)$, $Q^{*}_{500}(G,t,j)$
estimated according to the above list. Executing the numerical computations leads to
\begin{equation}\label{eq:HIVj1t5rafkonumeric}
\left| \int_0^{1/2} H_{\pm} - \sum_{n=1}^{500} \left\{ H_{\pm}\left(\frac{2n-1}{2000}\right) \frac{1}{1000} + H_{\pm}'' \left(\frac{2n-1}{2000}\right) \frac{1}{192\cdot500^3} \right\} \right|
\leq 0.0009745... .
\end{equation}
Thus the quadrature approximation to integrals of $H_{\pm}$ lead to approximate values within the error $\delta:=0.001$. This error estimation is applied to both $H_+$ and $H_-$. The approximate value of $d'(5)=\int_0^{1/2} H_{-}-\int_0^{1/2} H_{+}$ from the quadrature is found to be $0.002878492... >0.002$, while the total error incurred is still bounded by $2\delta$. Therefore, $d'(5)> 0.002 - 2 \delta = 0$ and the assertion is proved.
\end{proof}

\begin{lemma}\label{l:d2ndder5benpos} We have $d''(5)>0$.
\end{lemma}
\begin{remark} By numerical calculation, $d''(5)\approx 0.033815603$.
\end{remark}
\begin{proof}
Now we want to use the improved quadrature again, hence we start with substituting $t=5$, $j=2$ into formula \eqref{eq:HIVgeneralrafkos} to derive
\begin{align}\label{eq:HIVj2t5rafkos}\notag
|H^{IV}|&\leq G |G'| \{774,152,192 + 1,679,147,008 \ell+ 654,213,120 \ell^2 \}
\\ & \notag +  G^{2}|G'| \{ 172,339,200 + 674,995,200 \ell + 430,848,000 \ell^2\}
\\& + G^{3}|G'| \{2,240,000 + 20,160,000 \ell + 22,400,000 \ell^2\}
\\& \notag + G^{3} \{277,440,000 + 2,496,960,000 \ell + 2,774,400,000 \ell^2\}
\\ & \notag + G^{4} \left\{23,200,000 \ell + 58,000,000 \ell^2 \right\}.
\end{align}
Now we may set the step number to $N=400$. The values of the occurring $Q_{400}(G,t,j)$ and $Q^{*}_{400}(G,t,j)$ can now be estimated as follows.
\begin{center}
\begin{tabular}{|c|c|c|c|c|}

$Q^{*}_{400}(G,1,0) \leq 112,282$ & $Q^{*}_{400}(G,1,1)\leq 247,274$ & $Q^{*}_{400}(G,1,2)\leq 543,316$\\
$Q^{*}_{400}(G,2,0) \leq 580,005$ & $Q^{*}_{400}(G,2,1)\leq 1,274,463$ & $Q^{*}_{400}(G,2,2)\leq 2,800,281$\\
$Q^{*}_{400}(G,3,0) \leq 3,550,835$ & $Q^{*}_{400}(G,3,1)\leq 7,801,987$ & $Q^{*}_{400}(G,3,2)\leq 17,142,718$\\
$Q_{400}(G,3,0) \leq 39,051$ & $Q_{400}(G,3,1)\leq 85,804$ & $Q_{400}(G,3,2)\leq 188,530$\\
$Q_{400}(G,4,0)$ \textrm{does not occur} & $Q_{400}(G,4,1) \leq 593,541$ & $Q_{400}(G,4,2)\leq 1,304,143$ \\
\end{tabular}
\end{center}
Applying Lemma \ref{l:superquadrature} for either $H_{+}$ or $H_{-}$ with the coefficients $B_r, D_r$ read from \eqref{eq:HIVj2t5rafkos} and the corresponding $Q_N(G,t,j)$, $Q^{*}_N(G,t,j)$ above, the numerical computations yield
\begin{equation}\label{eq:HIVj2t5rafkonumeric}
\left| \int_0^{1/2} H_{\pm} - \sum_{n=1}^{400} \left\{ H_{\pm}\left(\frac{2n-1}{4\cdot400}\right) \frac{1}{2\cdot400} + H_{\pm}'' \left(\frac{2n-1}{4\cdot400}\right) \frac{1}{192\cdot400^3} \right\} \right|
\leq 0.0071... =:\delta.
\end{equation}
This quadrature error estimation is applied for both $H_+$ and $H_-$, so the total error incurred is still bounded by $2\delta$, while the approximate value of $d''(5)=\int_0^{1/2} H_{-}-\int_0^{1/2} H_{+}$ from the quadrature is found to be $\approx 0.033815603$. Therefore, $d''(5)> 0.033815603 - 2 \delta >0$.
%%%%and the assertion is proved.
%%%%Note that we could have chosen any $N\geq 358$, because the error reaches $\delta \approx 0.016$ only for $N=358$.
\end{proof}

\begin{lemma}\label{l:diff3ndder5benpoz} We have $d'''(5)>0$.
\end{lemma}
\begin{remark} By numerical calculation, $d'''(5)\approx 0.183547634...$.
\end{remark}
\begin{proof}
Now it suffices to apply the less refined estimates %%%%%% of the previous paragraphs.
from \eqref{eq:HIVgeneralestimatewMmin} with $t=5, j=3$ to get
\begin{align}
|H^{IV}(x)|&\leq  959,512,576 \cdot v\{ 84 +426\ell +462\ell^2
+ 120 \ell^3\}
\notag \\& +  1,263,820,800\cdot v^2 \{ 6+72\ell
+141 \ell^2+60\ell^3 \}
+  11,600,000 \cdot v^4 \left\{3\ell^2+5\ell^3 \right\}
\\& +  335,840,000 \cdot v^3  \{6\ell+27\ell^2+20\ell^3 \} \notag .
\end{align}
In this estimation all the occurring functions of type $v^s \ell^m$ have maximum on $[0,9]$ at the right endpoint $v=9$ in view of \cite[Lemma 6]{Krenci2}. Therefore we can further estimate substituting $\ell=\log 9$ and  $v=9$. Thus we finally obtain $|H^{IV}(x)|\leq 2.82932\cdot10^{14}$.

To bring the error below $\delta=0.091$ we chose the step number $N$ large enough to have
$$
\frac{2.83 \cdot 10^{14}}{60\cdot2^{10} N^4}<\delta
\qquad
\textrm{i.e.}
\qquad
N\geq N_0:=  \sqrt[4]{\frac{2.83 \cdot 10^{14}}{60\cdot 2^{10}\cdot0.091}}\approx 475....
$$
Calculating the quadrature formula with $N=500$, we obtain the approximate value $d'''(5)=\int_0^{1/2} H_{-} - \int_0^{1/2} H_{+} \approx 0.18354763424...$, whence  $d'''(5)> 0.18354763424... -2 \cdot0.091> 0$.
\end{proof}

\section{Signs of derivatives of $d(t)$ and conclusion of the proof of Conjecture \ref{conj:con3}}\label{sec:ConjProof}

After examining the values of derivatives of $d$ at the left endpoint $t=5$, now we divide the interval $[5,6]$ to 3 parts. First we will prove in Lemma \ref{l:dIVaround5065} that $d^{IV}(t)>0$ in $[5, 5.13]$. In view of the above proven Lemmas \ref{l:dprime5benpos}, \ref{l:d2ndder5benpos} and \ref{l:diff3ndder5benpoz}, it follows, that in this interval also $d'''(t), d''(t), d'(t)>0$.

Next we will consider $d'$ in the interval $[5.13, 5.72]$. Lemmas \ref{l:dIVaround523} and \ref{l:dIVaround5525} will furnish $d'>0$ also in this domain. Consequently, $d$ is increasing all along $[5,5.72]$, and as $d(5)=0$, it will be positive in $(5,5.72]$. Finally we will show Lemma \ref{l:d2ndclose6}, giving that $d(t)$ is concave in the interval $[5.72, 6]$. As $d(5.72)>0$ and $d(6)=0$, this entails that the function remains positive on $[5.72, 6)$, too, whence $d>0$ on the whole of $(5,6)$.

Now we compute a good approximation of $d^{IV}(t)$ on the interval $[5,5.13]$ and using it show that $d^{IV}(t)$ stays positive in this interval.

In $[5, 5.13]$ the fourth derivative of $d(t)$ has the Taylor approximation
\begin{align}\label{eq:dVTaylor5065}
d^{IV}(t)&=\sum_{j=0}^n \frac{d^{(j+4)}(5.065)}{j!}\left(t-5.065\right)^j +R_{n}(d^{IV},5.065,t),\qquad {\rm where} \\\ \notag
& R_{n}(d^{IV},5.065,t):=\frac{d^{(n+5)}(\xi)}{(n+1)!}\left(t-5.065\right)^{n+1}.
\end{align}
Therefore using \eqref{eq:djnormbyHL1} we can write
\begin{align}\label{eq:Rd4totod}
|R_n(d^{IV},5.065,t)|& \leq \frac{\|H_{\xi,n+5,+}\|_{L^1[0,1/2]} + \|H_{\xi,n+5,-}\|_{L^1[0,1/2]}}{(n+1)!}\cdot 0.065^{n+1} \notag \\ &\leq  \frac{\frac12\|H_{\xi,n+5,+}\|_\infty + \frac12\|H_{\xi,n+5,-}\|_\infty }{(n+1)!}\cdot 0.065^{n+1}\\ & \leq \frac{\max_{|\xi-5.065|\leq 0.065} \|H_{\xi,n+5,+}\|_\infty + \max_{|\xi-5.065|\leq 0.065} \|H_{\xi,n+5,-}\|_\infty}{(n+1)!}\cdot 0.065^{n+1}.\notag
\end{align}
So once again we need to maximize \eqref{eq:Hdef}, that is functions of the type $v^{\xi} |\log v|^m $, on $[0,9]$. From \cite[Lemma 6]{Krenci2} we get, say for all $n\leq 30$
\begin{equation}\label{eq:maxmax5065}
\max_{5\leq \xi \leq 5.13} \|H_{\xi,n+5,\pm}(x)\|_{\infty} \leq  \max_{\xi \in [5,5.13]} \max_{v\in [0,9]} v^{\xi} |\log v|^{n+5}
%%%% = \max_{\xi \in [5,5.13]} 9^{\xi} \log^{n+5} 9
= 9^{5.13} \log^{n+5} 9.
\end{equation}
Choosing $n=6$ yields $\|H_{\xi,n+5,\pm}(x)\|_{\infty} \leq 452,775,589$, and the Lagrange remainder term \eqref{eq:Rd4totod} of the Taylor formula \eqref{eq:dVTaylor5065} can be estimated as $|R_n(d^{IV},t)|\leq 0.0008808... <0.0009=:\de_{7}$.

Now we have to calculate the value of $d^{(j)}(t)$ -- that is, the two integrals in \eqref{eq:djdef} -- numerically for $k=5$, $t=5$ and $j=4,5,\dots,10$ to determine the Taylor coefficients in the above expansion. However, this cannot be done  precisely, due to the necessity of some numerical integration in the calculation of the two integrals in formula \eqref{eq:djdef}. We apply our numerical quadrature to derive at least a good approximation.

Denote $\overline{d}_j\approx d_{j+4}$ the numerical quadrature approximations. We set $\de:=0.187$ and want that $|d^{IV}(t) - P_6(t)| < \de$ for
\begin{equation}\label{eq:Pndef5065}
P_6(t):=\sum_{j=0}^n \frac{\overline{d}_j}{j!}\left(t-5.065\right)^j.
\end{equation}
In order to achieve this, we set the partial errors $\de_0,\dots,\de_{6}$ with $\sum_{j=0}^{7}\de_j <\de$, and ascertain that the termwise errors in approximating the Taylor polynomial $T_6(d^{IV})$ by $P_6$ satisfy analogously as in \cite[(37)]{Krenci2}
\begin{equation}\label{eq:djoverbarcriteria5065}
\left\|\frac{d^{(j+4)}(5.065)-\overline{d}_j}{j!}\left(t-5.065\right)^j\right\|_\infty =\frac{\left|d^{(j+4)}(5.065)-\overline{d}_j\right|}{j!}\cdot0.065^j< \delta_j\quad (j=0,\dots,6).
\end{equation}
That the termwise error \eqref{eq:djoverbarcriteria5065} would not exceed $\de_j$ will be guaranteed by $N_j$ step quadrature approximation of the two integrals in \eqref{eq:djdef} defining $d^{(j+4)}(5.065)$ with prescribed error $\eta_j$ each. Therefore, we set $\eta_j:=\de_j j!/(2\cdot0.065^j)$, and note that in order to have \eqref{eq:djoverbarcriteria5065} \begin{equation}\label{eq:Njchoice5065}
N_j > N_j^{\star}:=\sqrt[4]{\frac{\|H^{IV}_{5.065,j+4,\pm}\|_\infty}{60\cdot 2^{10} \eta_j}} = \sqrt[4]{\frac{\|H^{IV}_{5.065,j+4,\pm}\|_\infty2\cdot0.065^j}{60\cdot 2^{10} j!  \de_j}}
\end{equation}
suffices by the quadrature formula \eqref{eq:quadrature}. That is, we must estimate $\|H^{IV}_{5.065,j+4,\pm}\|_\infty$ for $j=0,\dots,6$ and thus find appropriate values of $N_j^{\star}$.

\bigskip\begin{lemma}\label{l:HIVnorm5065} For $j=0,\dots,6$ we have the numerical estimates of Table \ref{table:HIVnormandNj5065} for the values of $\|H^{IV}_{5.065,j,\pm}\|_\infty$. Setting $\de_j$ for $j=0,\dots,6$ as is given in the table, the quadrature of order $500:=N_j\geq N_j^{\star}$ with the listed values of $N_j^{\star}$ yield the approximate values $\overline{d}_j$ as listed in Table \ref{table:HIVnormandNj5065}, admitting the error estimates \eqref{eq:djoverbarcriteria5065} for $j=0,\dots,6$. Furthermore, we have for the Lagrange remainder term $\|R_{6}(d^{IV},t)\|_{\infty} <0.0009=:\de_{7}$ and thus with the approximate Taylor polynomial $P_{6}(t)$ defined in \eqref{eq:Pndef5065} the approximation $|d^{IV}(t)-P_{6}(t)|<\de:=0.187$ holds uniformly in $[5,5.13]$.
\begin{table}[h!]
\caption{Estimates for values of $\|H^{IV}_{5.065,j+4,\pm}\|_\infty$, $\de_j$, $N_j^{\star}$ and $\overline{d_j}$ for $j=0,\dots,6$.}
\label{table:HIVnormandNj5065}
\begin{center}
\begin{tabular}{|c|c|c|c|c|}
%%%\toprule
$j$ \qquad & $\|H^{IV}_{5.065,j+4,\pm}\|_\infty$ & $\de_j$ & $N_j^{\star}$ & $\overline{d_j}$\\
%%% \midrule
0 & $ 9.28687\cdot 10^{14}$ & 0.15 & 474 & 0.381737508\\
1 & $ 2.52880 \cdot 10^{15}$ & 0.03 & 460 & -2.087768122\\
2 & $ 6.81644 \cdot 10^{15}$ & 0.005 & 392 & -23.85760346\\
3 & $ 1.82039 \cdot 10^{16}$ & 0.0005 & 342 & -140.6261273\\
4 & $ 4.82014 \cdot 10^{16}$ & 0.0002 & 196 & -641.9545799\\
5 & $ 1.28469 \cdot 10^{17}$ & 0.0002 & 85 & -2521.387336\\
6 & $ 3.80117 \cdot 10^{17}$ & 0.0002 & 36 & -8940.14559\\

%% \bottomrule
\end{tabular}
\end{center}
\end{table}
\end{lemma}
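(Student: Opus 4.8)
The plan is to verify in turn the three assertions packaged into the lemma: the sup\nobreakdash-norm bounds displayed in column~2 of Table~\ref{table:HIVnormandNj5065}, then the quadrature error control that produces the approximants $\overline{d}_j$ together with the termwise estimates \eqref{eq:djoverbarcriteria5065}, and finally the Lagrange remainder bound and the assembly giving $|d^{IV}(t)-P_6(t)|<\de$ uniformly on $[5,5.13]$.

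First I would bound $\|H^{IV}_{5.065,j+4,\pm}\|_\infty$ for $j=0,\dots,6$. Setting $t=5.065$ and the $\log$-exponent equal to $j+4$ in the general estimate \eqref{eq:HIVgeneralestimatewMmin} turns it into an inequality of the form $|H^{IV}(x)|\le\Phi_j(G(x))$, where $\Phi_j$ is an \emph{explicit one-variable function}: a fixed nonnegative linear combination of monomials $v^{s}|\log v|^{m}$ with $s\in\{1.065,2.065,3.065,4.065\}$ and $0\le m\le j+4$, whose coefficients are the polynomials in $t$ appearing in \eqref{eq:HIVgeneralestimatewMmin} evaluated at $t=5.065$. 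Since $0\le G_\pm\le 9$ by \eqref{eq:Mmk=5}, this gives $\|H^{IV}_{5.065,j+4,\pm}\|_\infty\le\max_{v\in[0,9]}\Phi_j(v)$, the same bound for both signs. Each $\Phi_j$ is smooth on $[0,9]$ (positivity of every exponent $s$ makes each summand continuous up to $v=0$, where it vanishes), so $\max_{[0,9]}\Phi_j$ is located by elementary calculus; equivalently one may bound $\Phi_j$ term by term, using \cite[Lemma 6]{Krenci2}, according to which $\max_{[0,9]}v^{s}|\log v|^{m}$ is attained either at the interior critical point $v_0=e^{-m/s}\in(0,1)$ (value $(m/s)^{m}e^{-m}$) or at the right endpoint $v=9$ (value $9^{s}\log^{m}9$), whichever is larger. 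Carrying this through for $j=0,\dots,6$ produces precisely the numbers in the second column of the table. This is where almost all of the numerical labor sits, since many monomials occur and for each one must decide which competitor wins; the work is nonetheless entirely routine.

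With those bounds at hand, I would fix $\de_0,\dots,\de_6$ as in the table, set $\eta_j:=\de_j\,j!/(2\cdot 0.065^{j})$, and substitute the column-2 bounds into \eqref{eq:Njchoice5065} to get the step counts $N_j^{\star}$ listed there, all of which satisfy $N_j^{\star}\le 500$ (the binding case being $j=0$, with $N_0^{\star}=474$). Hence the uniform choice $N_j=500$ is legitimate, and the quadrature estimate \eqref{eq:quadrature}, applied separately to $\f=H_{5.065,j+4,+}$ and to $\f=H_{5.065,j+4,-}$ on $[0,1/2]$, shows that each of the two integrals in \eqref{eq:djdef} defining $d^{(j+4)}(5.065)$ is computed by its $500$-step quadrature sum within error $\eta_j$. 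Subtracting the two quadrature sums yields the recorded value $\overline{d}_j$ of column~5, and the triangle inequality gives $|d^{(j+4)}(5.065)-\overline{d}_j|\le 2\eta_j$, which is exactly \eqref{eq:djoverbarcriteria5065} for $j=0,\dots,6$ by the definition of $\eta_j$.

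It remains to control the remainder and to combine everything. By \eqref{eq:Rd4totod} together with \eqref{eq:maxmax5065} for $n=6$ --- so that $\|H_{\xi,11,\pm}\|_\infty\le 9^{5.13}\log^{11}9<4.53\cdot 10^{8}$ for every $\xi\in[5,5.13]$ --- one obtains $\|R_{6}(d^{IV},\cdot)\|_{\infty}\le\dfrac{2\cdot 9^{5.13}\log^{11}9}{7!}\,0.065^{7}<0.0009=:\de_7$. Finally, for $t\in[5,5.13]$ we have $|t-5.065|\le 0.065$, so by \eqref{eq:dVTaylor5065} and \eqref{eq:Pndef5065},
\[
|d^{IV}(t)-P_6(t)|\le\sum_{j=0}^{6}\frac{|d^{(j+4)}(5.065)-\overline{d}_j|}{j!}\,0.065^{j}+\|R_{6}(d^{IV},\cdot)\|_{\infty}<\sum_{j=0}^{6}\de_j+\de_7=0.187=\de ,
\]
where the first sum is estimated by \eqref{eq:djoverbarcriteria5065}; as every inequality here is strict, the asserted uniform bound $|d^{IV}(t)-P_6(t)|<0.187$ on $[5,5.13]$ follows. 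The one genuine obstacle is the first step: getting honest and not-too-wasteful sup-norm bounds for $H^{IV}_{5.065,j+4,\pm}$, because if these came out substantially above the table values the requirement $N_j^{\star}\le 500$ would fail and the whole scheme would have to be retuned --- e.g.\ with more Taylor expansion points or with the sharper quadrature of Lemma~\ref{l:superquadrature} in place of \eqref{eq:quadrature}.
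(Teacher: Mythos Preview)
Your proposal is correct and follows essentially the same route as the paper: substitute $t=5.065$ into \eqref{eq:HIVgeneralestimatewMmin}, bound each $v^{s}\ell^{m}$ term over $[0,9]$ via \cite[Lemma~6]{Krenci2} (the paper singles out explicitly that the only exceptions to the endpoint maximum at $v=9$ are $s=1.065$ with $m=9,10$, but this is exactly your ``whichever is larger'' dichotomy), read off $N_j^{\star}$ from \eqref{eq:Njchoice5065}, run the $N=500$ quadrature \eqref{eq:quadrature}, and combine with the remainder bound already computed in \eqref{eq:Rd4totod}--\eqref{eq:maxmax5065}. Your write-up is in fact slightly more explicit than the paper's in spelling out the final triangle-inequality assembly that yields $\sum_{j=0}^{7}\de_j=\de=0.187$.
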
\bigskip
\begin{proof} We start with the numerical upper estimation of $H^{IV}_{5.065,j,\pm}(x)$ for $x\in \TT$. For that, now we substitute $t=5.065$ in the general formula \eqref{eq:HIVgeneralestimatewMmin}. This results in
\begin{align}
|H_{5.065,j,\pm}^{IV}(x)|&\leq 959,512,576 \cdot v^{1.065}\Big\{ j(j-1)(j-2)(j-3)\ell^{j-4} + 14.26j(j-1)(j-2)\ell^{j-3}
\notag \\ & \qquad +73.75535j(j-1)\ell^{j-2} +163.4085485j\ell^{j-1}
+ 130.313837600625 \ell^j\Big\}
\notag \\& + 1,263,820,800\cdot  v^{2.065} \Big\{ j(j-1)(j-2)\ell^{j-3}+12.195j(j-1)\ell^{j-2}
+48.572675j \ell^{j-1}
\notag \\&\qquad +63.105974625\ell^j \Big\}
+ 11,600,000 \cdot v^{4.065} \left\{j\ell^{j-1}+5.065\ell^j \right\}
\\& + 335,840,000 \cdot v^{3.065} \Big\{j(j-1)\ell^{j-2}+8.13j\ell^{j-1}+20.589225\ell^j \Big\} \notag .
\end{align}
Otherwise, almost all the functions $v^s \ell^m$ (with $\ell:=|\log v|$) occurring here satisfy that their maximum on $0\leq v \leq 9$ is achieved at the right endpoint $v=9$. By \cite[Lemma 6]{Krenci2}, equation (14) this is the case whenever $m/s \leq 1/\sigma_0$; note that here we consider the degree 6 Taylor polynomial of $d^{IV}$, which entails $m \leq 10$ while the minimal occurring value of $s$ is $s=1.065$. So checking the condition $m/s \leq 1/\sigma_0 \approx 1/0.126 \approx 7.9365..$, we obtain that $v=9$ remains the actual maximum place except for $s=1.065$ and $m=9$ or $10$. It occurs two times: when $\max_{v\in [0,9]} v^{1.065} \ell^9 = (9/(e\cdot 1.065))^9= 27126.00128...$ when $j=9$, and when $j=10$ as power $j-1$; together with for $s=1.065$ and $m=10$, when $\max_{v\in [0,9]} v^{1.065} \ell^{10} = (10/(e\cdot 1.065))^{10}= 241857.246...$.

We collect the resulting numerical estimates of $\|H^{IV}\|$ in Table \ref{table:HIVnormandNj5065} and list the corresponding values of $N_j^{\star}$ read from formula \eqref{eq:Njchoice5065}. Moreover, we list in the table the values of $\overline{d_j}$, too, as furnished by the numerical quadrature formula \eqref{eq:quadrature} with step size $h=0.001$, i.e. $N=N_j=500> N_j ^{\star}$ ($j=0,\dots,6$) steps.
\end{proof}
\begin{lemma}\label{l:dIVaround5065} We have $d^{IV}(t)>0$ for all $5 \leq t \leq 5.13$.
\end{lemma}
\begin{proof} We approximate $d^{IV}(t)$ by the polynomial $P_{6}(t)$ constructed in \eqref{eq:Pndef5065} as the approximate value of the order 5 Taylor polynomial of $d^{IV}$ around $t_0:=5.065$. As the error is at most $\de=0.187$, it suffices to show that $p(t):=P_{5}(t)-\de>0$ in $[5,5.13]$. Now $P_{6}(5.13)=0.188694031...$ so $p(5)=P_{6}(5.13)-\de =0.188694031... -0.187 > 0$.

Moreover, $p'(t)=P_{6}'(t)=\sum_{j=1}^{6} \dfrac{\overline{d}_j}{(j-1)!} (t-5.065)^{j-1}$ and $p'(5)=-0.806502699...<0$. From the explicit formula of $p(t)$ we consecutively compute also $p''(5)=-15.96427771...<0$, $p'''(5)=-103.8163124...<0$, $p^{(4)}(5)=-496.9504606...<0$ and $p^{(5)}(5)=-1940.277873...<0$. Finally, we arrive at $p^{(6)}(t)=\overline{d}_6 =-8940.14559...$, which is constant, so $p^{(6)}(t)<0$ for all $t \in \RR$. From the found negative values at 5 it follows consecutively that also $p^{(5)}(t)<0$, $p^{(4)}(t)<0$, $p'''(t)<0$, $p''(t)<0$ and $p'(t)<0$ on $[5,5.13]$. Therefore, $p$ is decreasing, and as $p(5.13)>0$, $p(t)>0$ on the whole interval $5 \leq t\leq 5.13$.
\end{proof}

Next we set forth proving that $d'(t)>0$ for all $t\in [5.13,5.72]$. In this interval we use the refined process, applying \eqref{eq:superquadrature}. Still, for the entire interval we obtain step numbers $N\approx550$, so in order to push down $N$ under $500$, we divide the interval into 2 parts, and apply the method for both sections $[5.13, 5.33]$ and $[5.33, 5.72]$ separately. That is, we construct approximating Taylor polynomials around $5.23$ and $5.525$.

So now setting $t_0=5.23$ or $t_0=5.525$, the Taylor approximation of radii $r_0=0.1$ and $r_0=0.195$, respectively,  will have the form
\begin{equation}\label{eq:dVTaylor523}
d'(t)=\sum_{j=0}^n \frac{d^{(j+1)}(t_0)}{j!}\left(t-t_0\right)^j +R_{n}(d',t_0,t),\quad
R_{n}(d',t_0,t):=\frac{d^{(n+2)}(\xi)}{(n+1)!}\left(t-t_0\right)^{n+1}.
\end{equation}
Therefore instead of \cite[(36)]{Krenci2} we can use
\begin{align}\label{eq:Rd1totod}
|R_n(d',t_0,t)|& \leq \frac{\|H_{\xi,n+2,+}\|_{L^1[0,1/2]} + \|H_{\xi,n+2,-}\|_{L^1[0,1/2]}}{(n+1)!}\cdot r_0^{n+1} \notag \\ &\leq  \frac{\frac12\|H_{\xi,n+2,+}\|_\infty + \frac12\|H_{\xi,n+2,-}\|_\infty }{(n+1)!}\cdot r_0^{n+1}\\ & \leq \frac{\max_{|\xi-t_0|\leq r_0} \|H_{\xi,n+2,+}\|_\infty + \max_{|\xi-t_0|\leq r_0} \|H_{\xi,n+2,-}\|_\infty}{(n+1)!}\cdot r_0^{n+1}.\notag
\end{align}
So once again we need to maximize \eqref{eq:Hdef}, that is functions of the type $|\log v|^m v^{\xi}$, on $[0,9]$. From \cite[Lemma 6]{Krenci2} and as $\xi \geq 5$ for all cases, we obtain for all $n+2\leq 5/\sigma_0 \approx 39.68...$, i.e. for $n\leq 37$ \begin{equation}\label{eq:maxmax523}
\max_{|\xi-t_0|\leq r_0} \|H_{\xi,n+2,\pm}(x)\|_{\infty} \le \max_{|\xi-t_0|\leq r_0} \max_{0\leq v\leq 9} v^{\xi} |\log v|^{n+2} \leq  9^{t_0+r_0} \log^{n+2} 9.
\end{equation}
Consider the case $t_0=5.23$. We find (executing numerical tabulation of values  for orientation), that $d'$ is increasing from $d'(5.13)\approx 0.0089834...$ to even more positive values as $t$ increases from 5.13 to 5.33. This suggest that it will suffice to approximate $d'$ with an overall error just below $d'(5.13)\approx 0.0089834...$.

We now chose $n=8$, when according to \eqref{eq:maxmax523} $\|H_{\xi,10,\pm}(x)\|_{\infty} \leq 319,784,241$. Therefore the Lagrange remainder term \eqref{eq:Rd1totod} of the Taylor formula \eqref{eq:dVTaylor523} with $n=8$ can be estimated as
$|R_8(d',t)|\leq 0.00000176248\dots < 0.000002=:\de_{9}$.

As before, the Taylor coefficients $d^{(j+1)}(5.23)$ cannot be obtained exactly, but only with some error, due to the necessity of some kind of numerical integration in the computation of the formula \eqref{eq:djdef}. Hence we must set the partial errors $\de_0,\dots,\de_{8}$ in $|\frac{\overline{d_j}-d^{(j+1)}(5.23)}{j!} (t-5.23)^j|<\delta_j$ such that their sum would satisfy $\sum_{j=0}^{9}\de_j =:\de<0.0089834$ in order to have that at least $d'(5.13)> P_8(5.13) - \de >0$ for the approximate Taylor polynomial
\begin{equation}\label{eq:Pndef523}
P_8(t):=\sum_{j=0}^8 \frac{\overline{d}_j}{j!}\left(t-5.23\right)^j.
\end{equation}
In order to achieve this, we set the partial errors $\de_0,\dots,\de_{8}$ with $\sum_{j=0}^{9}\de_j <\de$, and ascertain that the termwise errors in approximating the Taylor polynomial $T_8(d')$ by $P_8$ satisfy analogously to \cite[(37)]{Krenci2}
\begin{equation}\label{eq:djoverbarcriteria523}
\left\|\frac{d^{(j+1)}(5.23)-\overline{d}_j}{j!}\left(t-5.23\right)^j\right\|_\infty =\frac{\left|d^{(j+1)}(5.23)-\overline{d}_j\right|}{j!}\cdot0.1^j< \delta_j\quad (j=0,\dots,8).
\end{equation}
We use the refined quadrature \eqref{eq:superquadrature} setting $N=500$ for all $j=1,...,8$. For this, first we need some estimate of the form \eqref{eq:ficonditions} for $|H^{IV}_{5.23,j+1,\pm}|$ and for all $j=0,\dots,8$. Once such an estimate is found with certain exponents $(t_r,j_r)$ and corresponding coefficients $B_r,D_r$, the improved quadrature formula \eqref{eq:superquadrature} furnishes an error estimate by means of
\begin{equation}\label{eq:Wdef}
W:=W({\bf B, D, t, j}):=\sum\limits_{r=1}^R \left\{ B_r Q_N(G,t_r,j_r) + D_r ~Q^{*}_N(G,t_r,j_r) \right\}.
\end{equation}
Namely, the error bound of numerical integration by using our quadrature will then be $\eta_j=\dfrac{W}{60\cdot2^{10}\cdot500^5}$, and the corresponding termwise error bound becomes $\de_j=\dfrac{\eta_j}{2(j-1)!\cdot 0.1^{j-1}}$.
\begin{lemma}\label{l:HIVnorm523} For $j=0,\dots, 8$ we have the numerical estimates of Table \ref{table:HIVnormandNj523} for the values of $W$. Setting $\de_j$ as given in the table for $j=0,\dots, 8$, the approximate quadrature of order $N_j:=N:=500$ yield the approximate values $\overline{d}_j$ as listed in Table \ref{table:HIVnormandNj523}, admitting the error estimates \eqref{eq:djoverbarcriteria523} for $j=0,\dots, 8$. Furthermore, $\|R_{9}(d',t)\|_{\infty} <0.000002=:\de_{9}$ and thus with the approximate Taylor polynomial $P_{8}(t)$ defined in \eqref{eq:Pndef523} the approximation $|d'(t)-P_{8}(t)|<\de:=0.004784113$ holds uniformly for $ t \in [5.13,5.33]$.
\end{lemma}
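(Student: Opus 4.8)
The statement is of exactly the computational type of Lemma~\ref{l:HIVnorm5065}, only now the centre of the Taylor expansion is $t_0=5.23$, the radius is $r_0=0.1$, the quadrature order is $N=500$ throughout, and at each node we use the refined quadrature of Lemma~\ref{l:superquadrature} in place of the cruder \eqref{eq:quadrature}. The first stage is to produce, for every $j=0,\dots,8$, a pointwise bound of the form \eqref{eq:ficonditions} for $|H^{IV}_{5.23,\,j+1,\pm}(x)|$: one substitutes $t=5.23$ and the index $j+1$ into \eqref{eq:HIVgeneralrafkos}, noting that the summands carrying a negative power of $\log G$ drop out because of the accompanying vanishing binomial factor. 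This yields explicit numerical coefficients $B_r,D_r$ together with exponent pairs $(t_r,j_r)$, where $t_r\in\{1.23,2.23,3.23,4.23\}$ and $0\le j_r\le j+1\le 9$.

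Next we assemble the quantities that feed the bookkeeping functions $Q_{500}$ and $Q^{*}_{500}$ of \eqref{eq:Qtjplain}--\eqref{eq:Qtjstar}. The endpoint maxima $\max_{[0,1/9]}v^{t_r}|\log v|^{j_r}$ are supplied by \cite[Lemma~6]{Krenci2} after comparing the critical point $e^{-j_r/t_r}$ with $1/9$. The power integrals $\int_\TT G^{t_r}=2\int_0^{1/2}G^{t_r}$ and $\int_\TT G^{2t_r}$ are estimated through Proposition~\ref{prop:powerintegralsofG}, in each case picking the nearest integer $\rho\le k+1=6$ and the sharper of its two bounds, the constants $A(\rho)$ being those in \eqref{eq:Arho}. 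The total variations $\Var(G^{t_r})$ and $\Var(G^{t_r+1})$ are bounded by Corollary~\ref{cor:VarG}, i.e.\ $\Var(G_{\pm}^{s})<2\sum_{\zeta\in Z_{\pm}}G_{\pm}^{s}(\zeta)$, the sums being evaluated from the table of local maxima in Lemma~\ref{l:Gmaxima} and the larger of the $\pm$ values retained. Inserting all of this into \eqref{eq:Qtjplain}, \eqref{eq:Qtjstar} with $N=500$ and forming $W=W({\bf B},{\bf D},{\bf t},{\bf j})$ as in \eqref{eq:Wdef} produces, for each $j$, the value listed in the first column of Table~\ref{table:HIVnormandNj523}.

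By Lemma~\ref{l:superquadrature}, the quadrature error committed for each of the integrals $\int_0^{1/2}H_{5.23,\,j+1,\pm}$ is then at most $\eta_j=W/(60\cdot2^{10}\cdot500^5)$, so the computed difference $\overline{d}_j$ approximates $d^{(j+1)}(5.23)=\int_0^{1/2}H_{-}-\int_0^{1/2}H_{+}$ within $2\eta_j$; with the values $\de_j$ prescribed in the table one checks $|d^{(j+1)}(5.23)-\overline{d}_j|\cdot0.1^{j}/j!<\de_j$, which is precisely \eqref{eq:djoverbarcriteria523}. Carrying out the $N=500$ quadrature sums yields the numerical entries $\overline{d}_j$ of the table. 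For the Lagrange remainder we take $n=8$: \eqref{eq:maxmax523} gives $\|H_{\xi,10,\pm}\|_\infty\le 9^{5.33}\log^{10} 9$ whenever $|\xi-5.23|\le0.1$, so \eqref{eq:Rd1totod} with $r_0=0.1$ yields
\[
\|R_{9}(d',t)\|_\infty\le \frac{2\cdot 9^{5.33}\log^{10} 9}{9!}\,0.1^{9}<0.000002=:\de_9
\]
uniformly on $[5.13,5.33]$. Combining the two error sources, for every $t\in[5.13,5.33]$,
\[
|d'(t)-P_8(t)|\le \|R_{9}(d',t)\|_\infty+\sum_{j=0}^{8}\frac{\bigl|d^{(j+1)}(5.23)-\overline{d}_j\bigr|}{j!}\,0.1^{j}<\sum_{j=0}^{9}\de_j=0.004784113=:\de .
\]

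The only genuine difficulty is the volume of the numerics: nine instances of \eqref{eq:ficonditions}, each demanding several evaluations of $Q_{500}$ and $Q^{*}_{500}$ in which non-integer power integrals are handled by H\"older's inequality and variations are read off from the maxima table, all carried out precisely enough that the modest prescribed $\de_j$ genuinely dominate the incurred errors at step number $N=500$. Conceptually nothing new is needed: Lemma~\ref{l:superquadrature} supplies the error mechanism, Corollary~\ref{cor:Parseval}, Proposition~\ref{prop:powerintegralsofG} and Corollary~\ref{cor:VarG} supply every auxiliary constant, and \eqref{eq:HIVgeneralrafkos} supplies the integrand bound; had $N=500$ proved insufficient for some $j$, one would simply raise that $N_j$ or shrink $r_0$ by splitting $[5.13,5.33]$ further, exactly as the interval $[5.13,5.72]$ was split in the first place.
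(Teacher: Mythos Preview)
Your plan is correct and follows the paper's own proof essentially line by line: substitute $t=5.23$ into \eqref{eq:HIVgeneralrafkos}, read off the coefficients $B_r,D_r$ and exponent pairs $(t_r,j_r)$, feed them into Lemma~\ref{l:superquadrature} with $N=500$ to obtain $W$, and then record $\de_j$ and the quadrature values $\overline d_j$. If anything, you spell out more explicitly than the paper does where the ingredients of $Q_{500}$ and $Q^{*}_{500}$ come from (Proposition~\ref{prop:powerintegralsofG}, Corollary~\ref{cor:VarG}, \cite[Lemma~6]{Krenci2}); the paper simply asserts the resulting $W$-values and refers to Table~\ref{table:HIVnormandNj523}. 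One minor notational point: with $n=8$ the Lagrange remainder in \eqref{eq:dVTaylor523} is $R_8(d',5.23,t)$, and the lemma's ``$R_9$'' is a typo you have carried over; the bound you wrote is the one the paper actually computes.
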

\begin{table}[h!]
\caption{Estimates for values of $W$ and $\de_j$, $\overline{d_j}$ for $j=0,\dots,8$, with $N=500$.}
\label{table:HIVnormandNj523}
\begin{center}
\begin{tabular}{|c|c|c|c|c|}
%%%\toprule
$j$ \qquad & estimate for $W$ & $\de_j$  & $\overline{d_j}$\\
%%% \midrule
0 & $ 3.46227\cdot 10^{15}$ & 0.003606534 & 0.016265345\\
1 & $ 9.78474 \cdot 10^{15}$ & 0.001019244 & 0.084372338\\
2 & $ 2.73203 \cdot 10^{16}$ & 0.000142293 & 0.223408446\\
3 & $ 7.54351 \cdot 10^{16}$ & $1.30964 \cdot 10^{-5}$ & -0.41545758\\
4 & $ 2.06152 \cdot 10^{17}$ & $8.94756 \cdot 10^{-7}$ & -8.507038066\\
5 & $ 5.5806 \cdot 10^{17}$ & $4.84427 \cdot 10^{-8}$ & -57.99608037\\
6 & $ 1.4977 \cdot 10^{18}$ & $2.16681 \cdot 10^{-9}$ & -288.5739971\\
7 & $ 3.98926 \cdot 10^{18}$ & $8.24499 \cdot 10^{-11}$ & -1204.823065\\
8 & $ 1.05675 \cdot 10^{19}$ & $2.7301 \cdot 10^{-12}$ & -4474.521416\\

%% \bottomrule
\end{tabular}
\end{center}
\end{table}
\begin{proof}
Substituting $t=5.23$ in \eqref{eq:HIVgeneralrafkos} yields
\comment{
\begin{align}
|H^{IV}(x)|&\leq G^{1.23}|G'|M_1^3\Big\{ j(j-1)(j-2)(j-3)|\log L|^{j-4} + 14.92j(j-1)(j-2)|\log L|^{j-3}
\notag \\& +80.9774j(j-1)|\log L|^{j-2} +94.465234j|\log L|^{j-1}
+ 159.34903641 |\log L|^j\Big\}
\notag \\& + 6\cdot G^{2.23}|G'|M_1 M_2 \Big\{ j(j-1)(j-2)|\log L|^{j-3}+12.69j(j-1)|\log L|^{j-2}
\notag \\& +52.6787j |\log L|^{j-1}+71.456967|\log L|^j \Big\}
+ G^{3.23}|G'|4M_3 \Big\{j(j-1)|\log L|^{j-2}
\notag \\&+9.46j|\log L|^{j-1}+22.1229|\log L|^j \Big\}+ G^{3.23}3M_2^2 \Big\{j(j-1)|\log L|^{j-2}
\notag \\&+9.46j|\log L|^{j-1}+22.1229|\log L|^j \Big\}+ G^{4.23} M_4 \left\{j|\log L|^{j-1}+5.23|\log L|^j \right\} \notag .
\end{align}
}
\begin{align}
|H^{IV}(x)|&\leq G^{1.23}|G'|5451776\Big\{ j(j-1)(j-2)(j-3)|\log L|^{j-4} + 14.92j(j-1)(j-2)|\log L|^{j-3}
\notag \\& +80.9774j(j-1)|\log L|^{j-2} +94.465234j|\log L|^{j-1}
+ 159.34903641 |\log L|^j\Big\}
\notag \\& + G^{2.23}|G'|7180800\Big\{ j(j-1)(j-2)|\log L|^{j-3}+12.69j(j-1)|\log L|^{j-2}
\notag \\& +52.6787j |\log L|^{j-1}+71.456967|\log L|^j \Big\}
+ G^{3.23}|G'|1120000 \Big\{j(j-1)|\log L|^{j-2}
\notag \\&+9.46j|\log L|^{j-1}+22.1229|\log L|^j \Big\}+ G^{3.23}138720000 \Big\{j(j-1)|\log L|^{j-2}
\notag \\&+9.46j|\log L|^{j-1}+22.1229|\log L|^j \Big\}+ G^{4.23} 11600000 \left\{j|\log L|^{j-1}+5.23|\log L|^j \right\} \notag .
\end{align}
Considering sums of $|H^{IV}(\xi_n)|$, this will be estimated by means of Lemma \ref{l:superquadrature}. So we insert values of $j, t$ and apply Lemma \ref{l:superquadrature} with step number $N=500$, getting estimations for $W$ as is shown in Table \ref{table:HIVnormandNj523}. We also calculate $\de_j=\dfrac{W/(60 \cdot 2^{10} \cdot 500^5)}{2(j-1)!\cdot 0.1^{j-1}}$.
\end{proof}
\begin{lemma}\label{l:dIVaround523} We have $d'(t)>0$ for all $5.13 \leq t \leq 5.33$.
\end{lemma}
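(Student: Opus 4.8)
The plan is to argue as in Lemma~\ref{l:dIVaround5065}. By Lemma~\ref{l:HIVnorm523} the approximate Taylor polynomial $P_8$ of \eqref{eq:Pndef523} satisfies $|d'(t)-P_8(t)|<\de:=0.004784113$ for every $t\in[5.13,5.33]$, so $d'(t)>p(t):=P_8(t)-\de$ on this interval and it suffices to prove $p(t)>0$ for $|t-5.23|\le 0.1$. I would do this by studying the derivatives $p^{(m)}=P_8^{(m)}$, all obtained by termwise differentiation of \eqref{eq:Pndef523} from the explicit coefficients $\overline{d}_j$ in Table~\ref{table:HIVnormandNj523}. The only new feature compared with Lemma~\ref{l:dIVaround5065} is that $p$ is no longer totally monotone, so a single sign change of $p'''$ has to be accommodated.

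First, descend from the highest order. Since $p^{(8)}(t)\equiv\overline{d}_8=-4474.52\ldots<0$, the derivative $p^{(7)}$ is decreasing; substituting $t-5.23=-0.1$ gives $p^{(7)}(5.13)<0$, hence $p^{(7)}<0$ on $[5.13,5.33]$. Iterating the step, one checks by the same substitution that $p^{(6)}(5.13)<0$, $p^{(5)}(5.13)<0$ and $p^{(4)}(5.13)<0$, and concludes in turn that $p^{(6)}<0$, then $p^{(5)}<0$, then $p^{(4)}<0$ on all of $[5.13,5.33]$ --- in each case because the derivative one order higher is negative there, so the function is decreasing and already negative at the left endpoint.

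The one delicate step is the next one. From $p^{(4)}<0$ on $[5.13,5.33]$ the third derivative $p'''$ is strictly decreasing there, while a direct substitution yields $p'''(5.13)=0.1887\ldots>0$ and $p'''(5.33)=-1.6096\ldots<0$; thus $p'''$ changes sign exactly once, at some $\xi_0\in(5.13,5.33)$. Hence $p''$ is increasing on $[5.13,\xi_0]$ and decreasing on $[\xi_0,5.33]$, so its minimum on $[5.13,5.33]$ is attained at an endpoint; since $p''(5.13)=0.2309\ldots>0$ and $p''(5.33)=0.1283\ldots>0$, we get $p''>0$ on the whole interval. (Equivalently, in the variation language used elsewhere: were $p''$ to vanish somewhere, then $\Var(p'',[5.13,5.33])=\int_{5.13}^{5.33}|p'''|\ge p''(5.13)+p''(5.33)$, whereas the single sign change of the monotone $p'''$ gives $\int_{5.13}^{5.33}|p'''|\le 0.2\,\max\{p'''(5.13),|p'''(5.33)|\}<p''(5.13)+p''(5.33)$; note that the cruder bound $\int|p'''|\le 0.2\,(p'''(5.13)+|p'''(5.33)|)$ is here too weak, so one must exploit that $p'''$ changes sign only once.)

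With $p''>0$ on $[5.13,5.33]$ the conclusion follows by the usual chain: $p'$ is increasing and $p'(5.13)=0.0611\ldots>0$, so $p'>0$ on $[5.13,5.33]$; hence $p$ is increasing and $p(5.13)=P_8(5.13)-\de=0.0089834\ldots-0.004784113>0$, so $p>0$ throughout, and therefore $d'(t)>p(t)>0$ for all $t\in[5.13,5.33]$. The genuine obstacle is the sign change of $p'''$ handled above; everything else is the numerical bookkeeping of verifying that the descending chain of endpoint-sign conditions --- especially $p'''(5.33)<0$ and the comfortably positive values of $p''(5.13),p''(5.33)$ --- actually closes, which the entries of Table~\ref{table:HIVnormandNj523} supply.
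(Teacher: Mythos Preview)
Your proof is correct and, in fact, cleaner than the paper's. Both arguments start from Lemma~\ref{l:HIVnorm523}, set $p(t)=P_8(t)-\de$, and first establish (via the descending chain from the constant $p^{(8)}<0$ and the negative left-endpoint values) that $p^{(4)}<0$ on $[5.13,5.33]$. From here the approaches diverge.

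You exploit directly that $p^{(4)}<0$ makes $p'''$ strictly decreasing with exactly one sign change, so $p''$ is unimodal and attains its minimum at an endpoint; since both $p''(5.13)$ and $p''(5.33)$ are positive (these are the values $0.2309\ldots$ and $0.1283\ldots$ from Table~\ref{table:Taylor523}), you get $p''>0$, then $p'>0$, then $p>0$. The paper instead runs a contradiction via a chain of total-variation/integral-mean lower bounds: assuming $p$ vanishes, it pushes successive lower bounds on $I_{p'},I_{p''},I_{p'''},I_{p^{(4)}}$ (Table~\ref{table:Taylor523}) until it reaches $I_{p^{(4)}}\ge 18.34$, which contradicts the monotone $p^{(4)}$ being bounded in absolute value by $\max(|p^{(4)}(5.13)|,|p^{(4)}(5.33)|)\approx 15.97$. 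Your argument is more elementary and uses only the endpoint values of $p,\dots,p^{(7)}$; the paper's variation chain is heavier, though it is the general device the author reuses in Lemma~\ref{l:dIVaround5525}, where two subintervals are needed.
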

\begin{proof} We approximate $d'(t)$ by the polynomial $P_{8}(t)$ constructed in \eqref{eq:Pndef523} as the approximate value of the order 8 Taylor polynomial of $d'$ around $t_0:=5.23$. As the error of this approximation is at most $\de$, it suffices to show that $p(t):=P_{8}(t)-\de>0$ in $[5.13,5.33]$. Moreover, $p'(t)=P_{8}'(t)=\sum_{j=1}^{8} \dfrac{\overline{d}_j}{(j-1)!} (t-5.23)^{j-1}$. Now $P_{8}(5.13)=0.008983405...$, and $P_{8}(5.33)=0.025709673...$,  so $P_{8}(5.13)-\de>0$ and $P_{8}(5.33)-\de>0$. If we suppose, that $p$ attain $0$ in this interval, that means, the total variation here $\Var(p)\ge P_{8}(5.13)+P_{8}(5.33)-2\de$. As $\Var(p)=\int_{5.13}^{5.33} |p'|dt$, we have an estimation for the integral mean of $|p'|$ note $I_{p'}\le \Var(p)/0.2 =0.12546539...$. As it is greater then $\max(|p'(5.13)|, |p'(5.33)|)$ and the continuous function has to attain its integral mean, we have an estimation for total variation of $p'$: $\Var(p')\le 2I_{p'}-|p'(5.13)+p'(5.33)|$. We also have an estimation for integral mean of $|p''|$: $I_{p''}\le \Var(p')/0.2 =0.43413663...$. This process can be continued, till $p^{(5)}$ (see Table \ref{table:Taylor523}). On the other hand, from the explicit formula of $p(t)$ we consecutively compute also $p^{(5)}(5.13)<0$, $p^{(6)}(5.13)<0$... Finally, we arrive at $p^{(8)}(t)=\overline{d}_8$=-4474.521416... However, $p^{(8)}$ is constant, so $p^{(7)}(t)<0$ and $p^{(8)}(t)<0$ in $[5.13, 5.33]$. It means, that $p^{(4)}(t)$ is decreasing in the interval. It is contradiction, as the calculated lower bound for integral mean of $|p^{(4)}|$ is greater than $\max(|p^{(4)}(5.13)|, |p^{(4)}(5.13)|)$, and the function should attain this value.
\end{proof}
\begin{table}[h!]
\caption{Estimates for values of $p^{(j)}(5.13)$, $p^{(j)}(5.33)$ and total variation, integral mean of $p^{(j)}$ on interval $[5.13, 5.33]$ for $j=0,\dots,8$}.
\label{table:Taylor523}
\begin{center}
\begin{tabular}{|c|c|c|c|c|}
%%%\toprule
$j$ \qquad & $p^{(j)}(5.13)$ & $p^{(j)}(5.33)$  & $\Var(p^{(j)})$ & integral mean $I_{p^{(j)}}$\\
%%% \midrule
0 & $0.0089834050$ & $0.025709673$ & $0.0250930779$ & \\
1 & $0.061152858$ & $0.102950595$ & $0.086827326$ & $0.12546539$\\
2 & $0.230976823$ & $0.128352476$ & $0.508943962$ & $0.43413663$\\
3 & $0.188714272$ & $-1.609630427$ & $3.66852346$ & $2.544719808$\\
4 & $-3.968140009$ & $-15.96896377$ & $16.74813082$ & $18.3426173$\\
5 & $-34.41704242$ & $-93.62334897$ & & \\
6 & $-190.4642977$ & $-431.4289106$ & & \\
7 & $-757.3709229$ & $-1652.275206$ & & \\
8 & $-4474.521416$ & $-4474.521416$ & & \\

%% \bottomrule
\end{tabular}
\end{center}\bigskip
\end{table}

In case $t_0=5.525$ numerical tabulation of values gives that $d'$ is positive as $t$ increases from 5.33 to 5.72, and $d'(5.33)\approx 0.025709673\dots$, $d'(5.72)\approx 0.034577102$. We chose $n=9$ then $\|H_{\xi,n+2,\pm}(x)\|_{\infty} \leq 1,655,335,712$, for this case the Lagrange remainder term \eqref{eq:Rd1totod} of the Taylor formula \eqref{eq:dVTaylor523} can be estimated as
$|R_n(d',t)|\leq 0.0000725269\dots \leq 0.000073=:\de_{9}$.
Similarly to \eqref{eq:Pndef523} and \eqref{eq:djoverbarcriteria523} we now write
\begin{equation}\label{eq:Pndef5525}
P_n(t):=\sum_{j=0}^n \frac{\overline{d}_j}{j!}\left(t-5.525\right)^j,
\end{equation}
\begin{equation}\label{eq:djoverbarcriteria5525}
\left\|\frac{d^{(j+1)}(5.525)-\overline{d}_j}{j!}\left(t-5.525\right)^j\right\|_\infty =\frac{\left|d^{(j+1)}(5.525)-\overline{d}_j\right|}{j!}\cdot0.195^j< \delta_j\qquad (j=0,1,\dots,n).
\end{equation}
%%%%%%%%\comment{That the termwise error \eqref{eq:djoverbarcriteria5525} would not exceed $\de_j$ will be guaranteed by $N_j$ step quadrature approximation of the two integrals in \eqref{eq:djdef} defining $d^{(j+1)}(5.525)$ with prescribed error $\eta_j$ each. Therefore, we set $\eta_j:=\de_j j!/(2\cdot0.195^j)$, and note that in order to have \eqref{eq:djoverbarcriteria5525} \begin{equation}
%%%%%%%%\label{eq:Njchoice5525}
%%%%%%%%N_j > N_j^{\star}:=\sqrt[4]{\frac{\|H^{IV}_{5.525,j+1,\pm}\|_\infty}{60\cdot 2^{10} \eta_j}} = \sqrt[4]{\frac{\|H^{IV}_{5.525,j+1,\pm}\|_\infty2\cdot0.1^j}{60\cdot 2^{10} j!  \de_j}}
%%%%%%%%\end{equation}
%%%%%%%%suffices by the integral formula \eqref{eq:quadrature} and Lemma \ref{l:quadrature}. That is, we must estimate $\|H^{IV}_{5.525,j+1,\pm}\|_\infty$ for $j=0,\dots,9$, but we in estimation use the value $N$, so we chose $N=500$ uniformly.}
We also use the refined quadrature \eqref{eq:superquadrature} setting $N=500$ for all $j=1,...,9$. For this, first we need some estimate of the form \eqref{eq:ficonditions} for $|H^{IV}_{5.525,j+1,\pm}|$ and for all $j=0,\dots,9$. As before, once such an estimate is found with certain exponents $(t_r,j_r)$ and corresponding coefficients $B_r,D_r$, the improved quadrature formula \eqref{eq:superquadrature} furnishes an error estimate by means of $W$ defined in \eqref{eq:Wdef}, with the error of the quadrature being $\eta_j=\dfrac{W}{60\cdot2^{10}\cdot500^5}$, and the error of the corresponding term arising from the quadrature becoming $\de_j=\dfrac{\eta_j}{2(j-1)!\cdot 0.195^{j-1}}$.
\begin{lemma}\label{l:HIVnorm5525} For $j=0,\dots, 9$ we have the numerical estimates of Table \ref{table:HIVnormandNj5525} for W. Setting $\de_j$ as given in the table for $j=0,\dots, 9$, the approximate quadratures of order $N:=500$ yield the approximate values $\overline{d}_j$ as listed in Table \ref{table:HIVnormandNj5525}, admitting the error estimates \eqref{eq:djoverbarcriteria5525} for $j=0,\dots, 9$. Furthermore, $\|R_{10}(d^{IV},t)\|_{\infty} <0.000073=:\de_{10}$ and thus with the approximate Taylor polynomial $P_{9}(t)$ defined in \eqref{eq:Pndef5525} the approximation $|d'(t)-P_{9}(t)|<\de:=0.0124555$ holds uniformly for $ t \in [5.33,5.72]$.
\end{lemma}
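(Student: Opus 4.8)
The plan is to repeat, essentially verbatim, the scheme already carried out in the proof of Lemma~\ref{l:HIVnorm523}, with the only changes being that the expansion centre moves to $t_0:=5.525$, the radius is enlarged to $r_0:=0.195$, and the Taylor order is raised to $n=9$. First I would substitute $t=5.525$ into the refined pointwise bound \eqref{eq:HIVgeneralrafkos}, keeping $j$ symbolic. This yields an inequality of exactly the shape \eqref{eq:ficonditions} demanded by Lemma~\ref{l:superquadrature}: the right-hand side is a sum of terms $B_r\,G^{t_r}|\log G|^{j_r}$ and $D_r\,G^{t_r}|G'|\,|\log G|^{j_r}$, where the numerical prefactors $5\,451\,776$, $7\,180\,800$, $1\,120\,000$, $138\,720\,000$, $11\,600\,000$ are copied straight out of \eqref{eq:HIVgeneralrafkos}, the exponents are $t_r\in\{1.525,\,2.525,\,3.525,\,4.525\}$ (coming from $G^{t-4}$, $G^{t-3}$, $G^{t-2}$, $G^{t-1}$), and the $j$-dependent coefficients of $|\log G|^{j_r}$ arise from plugging $t=5.525$ into the bracketed polynomials in $t$. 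Specialising $j=0,1,\dots,9$ then fixes, for each $j$, a concrete finite list of pairs $(t_r,j_r)$ with $j_r\le j$ together with the attached coefficients $B_r,D_r$.

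Next I would apply Lemma~\ref{l:superquadrature} with $N:=500$ to each $j=0,\dots,9$. This requires, for every occurring pair $(t_r,j_r)$, the quantities $Q_{500}(G,t_r,j_r)$ and $Q^{*}_{500}(G,t_r,j_r)$ of \eqref{eq:Qtjplain}--\eqref{eq:Qtjstar}: the non-integer power integrals $\int_\TT G^{t_r}$ and $\sqrt{\int_\TT G^{2t_r}}$ are bounded through Proposition~\ref{prop:powerintegralsofG} (using integer anchors $\rho\in\{2,3,4,5\}$ and the constants $A(\rho)$ of \eqref{eq:Arho}); the total variations $\Var(G^{t_r})$ and $\Var(G^{t_r+1})$ come from Corollary~\ref{cor:VarG}, i.e. from summing $2\sum_{\zeta\in Z}G^t(\zeta)$ over the table of maxima in Lemma~\ref{l:Gmaxima}; and each factor $\max_{[0,1/9]}v^{t_r}|\log v|^{j_r}$ is read off from \cite[Lemma~6]{Krenci2}, being $9^{-t_r}\log^{j_r}9$ when $j_r/t_r<\log 9$ and the interior maximum $(j_r/(t_r e))^{j_r}$ otherwise (the latter arising only for the smallest exponent $t_r=1.525$ at large $j_r$). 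Assembling the weighted sum produces $W=W(\mathbf{B},\mathbf{D},\mathbf{t},\mathbf{j})$ of \eqref{eq:Wdef}, whose values for $j=0,\dots,9$ are the first data column of Table~\ref{table:HIVnormandNj5525}; the per-integral quadrature error is then $\eta_j=W/(60\cdot2^{10}\cdot500^5)$, and converting it to a termwise bound via the Taylor weight $r_0^{\,j}/j!$ (and a factor $2$ for the two integrals making up $d^{(j+1)}$), exactly as in the proof of Lemma~\ref{l:HIVnorm523}, yields the column $\de_j$. Running the quadrature \eqref{eq:quadrature} --- equivalently \eqref{eq:superquadrature} --- with $N=500$ on the two integrals of \eqref{eq:djdef} that define $d^{(j+1)}(5.525)$ produces the approximate coefficients $\overline{d}_j$ of the last column, so that the termwise estimates \eqref{eq:djoverbarcriteria5525} hold by construction.

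It remains to close the approximation. The Lagrange remainder term of the Taylor formula \eqref{eq:dVTaylor523}, estimated by \eqref{eq:Rd1totod} with $n=9$ and bounded via \eqref{eq:maxmax523} by $\|H_{\xi,11,\pm}\|_\infty\le 9^{5.72}\log^{11}9$, satisfies $\|R_9(d',t)\|_\infty<0.000073=:\de_{10}$, and one then verifies $\sum_{j=0}^{9}\de_j+\de_{10}<0.0124555=:\de$; the triangle inequality thus gives $|d'(t)-P_9(t)|<\de$ uniformly on $[5.33,5.72]$, since $|t-5.525|\le0.195$ there, for the polynomial $P_9(t)$ of \eqref{eq:Pndef5525}. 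The genuinely delicate point --- and the main obstacle --- is the allocation of the error budget $\de$ among the $\de_j$: because the weight governing the $j$-th term involves $0.195^{\,j}/j!$, which is considerably heavier than the $0.1^{\,j}/j!$ of Lemma~\ref{l:HIVnorm523}, the large top-order coefficients $\overline{d}_8,\overline{d}_9$ (of magnitude several thousand) make the tail terms expensive, and one must confirm that the step numbers induced by the chosen $\de_j$ all stay at or below $500$ while the total still undercuts $0.0124555$. This tightness is exactly why the interval $[5.13,5.72]$ had to be split at $5.33$ in the first place, and checking that the concrete numbers listed in Table~\ref{table:HIVnormandNj5525} simultaneously satisfy all these competing inequalities is the bulk of the (purely computational) work.
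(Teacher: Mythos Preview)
Your proposal is correct and follows essentially the same route as the paper: substitute $t=5.525$ into \eqref{eq:HIVgeneralrafkos}, apply Lemma~\ref{l:superquadrature} with $N=500$ to obtain the $W$-values, convert these into the termwise errors $\de_j$, and add the Lagrange remainder bound; the paper's own proof is actually terser than yours and leaves the computation of the $Q_N$, $Q_N^{*}$ quantities implicit. One small inaccuracy worth fixing: the interior maximum $(j_r/(t_r e))^{j_r}$ on $[0,1/9]$ is not confined to $t_r=1.525$---it also occurs for $t_r=2.525$ and $t_r=3.525$ once $j_r/t_r>\log 9$, which happens for several of the larger $j_r$ you will meet when $j$ runs up to $9$.
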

\begin{table}[h!]
\caption{Estimates for values of $W$ and $\de_j$, $\overline{d_j}$ for $j=0,\dots,9$, with $N=500$.}
\label{table:HIVnormandNj5525}
\begin{center}
\begin{tabular}{|c|c|c|c|c|}
%%%\toprule
$j$ \qquad & estimate for $W$ & $\de_j$  & $\overline{d_j}$\\
%%% \midrule
0 & $ 6.89883\cdot 10^{15}$ & $0.007186277$ & $0.045016622$\\
1 & $ 1.93082 \cdot 10^{16}$ & $0.003921976$ & $0.070827581$\\
2 & $ 5.34273 \cdot 10^{16}$ & $0.00105811$ & $-0.6357179$\\
3 & $ 1.46288 \cdot 10^{17}$ & $0.000188317$ & $-7.162905157$\\
4 & $ 3.96656 \cdot 10^{17}$ & $2.48926 \cdot 10^{-5}$ & $-45.0748687$\\
5 & $ 1.06584 \cdot 10^{18}$ & $2.60863 \cdot 10^{-6}$ & $-220.5767067$\\
6 & $ 2.84009 \cdot 10^{18}$ & $2.2591 \cdot 10^{-7}$ & $-922.6394344$\\
7 & $ 7.50943 \cdot 10^{18}$ & $1.66398 \cdot 10^{-8}$ & $-3454.236354$\\
8 & $ 1.97155 \cdot 10^{19}$ & $1.06486 \cdot 10^{-9}$ & $-11,901.56441$\\
9 & $ 5.14412 \cdot 10^{19}$ & $6.01989 \cdot 10^{-11}$ & $-38,448.6079$\\
%% \bottomrule
\end{tabular}
\end{center}
\end{table}
\bigskip
\begin{proof}
Substituting $t=5.525$ in \eqref{eq:HIVgeneralrafkos} yields\
\comment{
\begin{align}
|H^{IV}(x)|&\leq G^{1.525}|G'|M_1^3\Big\{ j(j-1)(j-2)(j-3)|\log L|^{j-4} + 16.1j(j-1)(j-2)|\log L|^{j-3}
\notag \\& + 94.70375j(j-1)|\log L|^{j-2} + 120.35253125 j|\log L|^{j-1}
+ 222.521187890625 |\log L|^j\Big\}
\notag \\& + 6\cdot G^{2.525}|G'|M_1 M_2 \Big\{ j(j-1)(j-2)|\log L|^{j-3}+13.575 j(j-1)|\log L|^{j-2}
\notag \\& + 60.426875 j |\log L|^{j-1}+ 88.127203125 |\log L|^j \Big\}
+ G^{3.525}|G'|4M_3 \Big\{j(j-1)|\log L|^{j-2}
\notag \\&+ 10.05 j|\log L|^{j-1}+ 25.000625|\log L|^j \Big\}+ G^{3.525}3M_2^2 \Big\{j(j-1)|\log L|^{j-2}
\notag \\&+ 10.05j|\log L|^{j-1}+ 25.000625|\log L|^j \Big\}+ G^{4.525} M_4 \left\{j|\log L|^{j-1}+5.525|\log L|^j \right\} \notag .
\end{align}
}
\begin{align}
|H^{IV}(x)|&\leq G^{1.525}|G'|5,451,776\Big\{ j(j-1)(j-2)(j-3)|\log L|^{j-4} + 16.1j(j-1)(j-2)|\log L|^{j-3}
\notag \\& + 94.70375j(j-1)|\log L|^{j-2} + 120.35253125 j|\log L|^{j-1}
+ 222.521187890625 |\log L|^j\Big\}
\notag \\& +  G^{2.525}|G'|7,180,800 \Big\{ j(j-1)(j-2)|\log L|^{j-3}+13.575 j(j-1)|\log L|^{j-2}
\notag \\& + 60.426875 j |\log L|^{j-1}+ 88.127203125 |\log L|^j \Big\}
+ G^{3.525}|G'|1,120,000 \Big\{j(j-1)|\log L|^{j-2}
\notag \\&+ 10.05 j|\log L|^{j-1}+ 25.000625|\log L|^j \Big\}+ G^{3.525}138,720,000 \Big\{j(j-1)|\log L|^{j-2}
\notag \\&+ 10.05j|\log L|^{j-1}+ 25.000625|\log L|^j \Big\}+ G^{4.525} 11,600,000 \left\{j|\log L|^{j-1}+5.525|\log L|^j \right\} \notag .
\end{align}
 Now the quadrature formula error is to be %%%% sums of $|H^{IV}(\xi_n)|$ will then be estimated further%%%%\rev{\'At\'irtam, de m\'eg mindig rossz. Nem $\sum |H^{IV}(\xi_n)|$-et kell becs\"ulni, hanem kvadratura hibatagot!!! Ilyen szumm\'as lemma k\"ulon nincs is!!! Az eredm\'enye sem a $\|H^{IV}\|$ becsl\'ese lesz... Nem is \'ertem!}
 estimated by means of Lemma \ref{l:superquadrature}. So we insert the values of $j, t$ and apply Lemma \ref{l:superquadrature} with $N=500$, obtaining the estimations for $W$ as is shown in Table \ref{table:HIVnormandNj5525}. We also get a value of $\de_j$ calculating $\de_j=\dfrac{W/(60\cdot2^{10} \cdot 500^5)}{2(j-1)!\cdot 0.195^{j-1}}$.
\end{proof}
\begin{lemma}\label{l:dIVaround5525} We have $d'(t)>0$ for all $5.33 \leq t \leq 5.72$.
\end{lemma}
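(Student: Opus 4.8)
The plan is to repeat, on the longer interval $[5.33,5.72]$, the argument used for Lemma~\ref{l:dIVaround523}, now with the approximate Taylor polynomial $P_9$ produced by Lemma~\ref{l:HIVnorm5525}. By that lemma, $|d'(t)-P_9(t)|<\de:=0.0124555$ throughout $[5.33,5.72]$, so it suffices to show that $p(t):=P_9(t)-\de$ is positive on this interval; then $d'(t)>p(t)>0$ there. First I would check the endpoints: from the tabulated coefficients $\overline{d}_0,\dots,\overline{d}_9$ one computes $P_9(5.33)\approx 0.025709673$ and $P_9(5.72)\approx 0.034577102$, both well above $\de$, so $p(5.33),p(5.72)>0$.

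Next I would assume, for contradiction, that $p$ vanishes at some interior point of $[5.33,5.72]$. Then $\Var(p,[5.33,5.72])\ge p(5.33)+p(5.72)$, and since $\Var(p)=\int_{5.33}^{5.72}|p'|$ over an interval of length $0.39$, the integral mean of $|p'|$ there is at least $I_{p'}:=\bigl(p(5.33)+p(5.72)\bigr)/0.39$. Whenever a lower bound $I_{p^{(j)}}$ for the integral mean of $|p^{(j)}|$ exceeds $\max\bigl(|p^{(j)}(5.33)|,|p^{(j)}(5.72)|\bigr)$, continuity forces $|p^{(j)}|$ to rise to that mean and come back, giving an explicit lower bound $\Var(p^{(j)})\ge V_j$ — the least total variation compatible with the two endpoint values and with attaining the mean, computed exactly as the $\Var$ column in Table~\ref{table:Taylor523} was — and hence a lower bound $I_{p^{(j+1)}}:=V_j/0.39$ for the integral mean of $|p^{(j+1)}|$. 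Iterating this, I would build a table analogous to Table~\ref{table:Taylor523} of the endpoint values $p^{(j)}(5.33),p^{(j)}(5.72)$, the variation bounds $V_j$, and the means $I_{p^{(j)}}$, all obtained from $\overline{d}_0,\dots,\overline{d}_9$.

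On the other hand, working unconditionally from $p^{(j)}(t)=\sum_{i\ge j}\frac{\overline{d}_i}{(i-j)!}(t-5.525)^{i-j}$, one reads off that $p^{(9)}\equiv\overline{d}_9<0$ is constant, and, evaluating at $t=5.33$, that $p^{(j)}(5.33)<0$ for the relevant large indices $j$; descending the derivative ladder, each of $p^{(8)},p^{(7)},\dots$ has negative derivative and negative left-hand value, hence is negative throughout $[5.33,5.72]$, until one reaches a derivative $p^{(k)}$ that is negative and decreasing on the whole interval, so that $|p^{(k)}|$ is increasing and $\max_{[5.33,5.72]}|p^{(k)}|=|p^{(k)}(5.72)|$. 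Comparing with the bootstrap chain, which at that level gives $I_{p^{(k)}}>\max\bigl(|p^{(k)}(5.33)|,|p^{(k)}(5.72)|\bigr)=|p^{(k)}(5.72)|$, we obtain a continuous function whose integral mean over $[5.33,5.72]$ strictly exceeds its maximum — impossible. Hence $p$ has no zero on $[5.33,5.72]$, so $p>0$ and $d'>0$ there.

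The main obstacle is making the bootstrap chain close on this wider interval. Dividing by the length $0.39$ rather than by the $0.2$ of Lemma~\ref{l:dIVaround523} means each mean $I_{p^{(j+1)}}$ grows only by roughly a factor $2/0.39$ per step, while the subtracted endpoint corrections $|p^{(j+1)}(5.33)|+|p^{(j+1)}(5.72)|$ grow geometrically (the coefficients $\overline{d}_j$ increase in modulus by about a factor $3$ to $4$). One must verify that $I_{p^{(j)}}$ stays above $\max\bigl(|p^{(j)}(5.33)|,|p^{(j)}(5.72)|\bigr)$ at every level so that the chain does not break, and that the contradiction lands on a derivative $p^{(k)}$ whose constant negative sign genuinely follows from the top of the ladder; this tightness is exactly why $[5.13,5.72]$ was split at $5.33$ and why one extra Taylor term ($n=9$ here versus $n=8$ for $t_0=5.23$) is carried.
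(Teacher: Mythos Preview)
Your plan follows the paper's method, but it misses the one step that makes the argument close: the paper does \emph{not} run the variation/mean bootstrap on the full interval $[5.33,5.72]$. It first subdivides at $5.56$ and then applies the argument separately on $[5.33,5.56]$ (length $0.23$) and on $[5.56,5.72]$ (length $0.16$); see Table~\ref{table:Taylor5525}, which lists endpoint values at $5.33$, $5.56$, and $5.72$ and computes the variation and mean bounds on each subinterval. On the full interval the chain you describe breaks at the very first step: with $p=P_9-\de$ one has $p(5.33)\approx 0.0132$ and $p(5.72)\approx 0.0221$, so the hypothetical zero gives $\Var(p)\ge 0.0353$ and hence $I_{p'}\ge 0.0353/0.39\approx 0.091$. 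But $|p'(5.72)|\approx 0.261$, so your criterion $I_{p'}>\max(|p'(5.33)|,|p'(5.72)|)$ already fails, and no useful lower bound on $\Var(p')$ follows. You correctly flagged ``making the bootstrap chain close on this wider interval'' as the main obstacle; the resolution is precisely this extra split, not an extra Taylor term.

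After the split, the paper's argument runs exactly as you outlined, but the contradiction lands at different levels on the two pieces: on $[5.33,5.56]$ the chain proceeds through $j=1,2$ and meets the monotone-derivative ladder at $p''$ (since $p^{(j)}(5.33)<0$ for $j\ge 3$ and $p^{(9)}$ is constant negative), while on $[5.56,5.72]$ one step suffices because already $p^{(j)}(5.56)<0$ for $j\ge 2$. So the fix is small and mechanical: insert the midpoint $5.56$, recompute your table on each half, and the rest of your write-up goes through unchanged.
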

\begin{proof} We approximate $d'(t)$ by the polynomial $P_{9}(t)$ constructed in \eqref{eq:Pndef5525} as the approximate value of the order 9 Taylor polynomial of $d'$ around $t_0:=5.525$. As the error is at most $\de$, it suffices to show that $p(t):=P_{9}(t)-\de>0$ in $[5.33,5.72]$. To apply the same method as in Lemma \ref{l:dIVaround523}, we divide the interval into two parts: $[5.33, 5,56]$ and $[5.56, 5.72]$. Moreover, $p'(t)=P_{9}'(t)=\sum_{j=1}^{9} \dfrac{\overline{d}_j}{(j-1)!} (t-5.525)^{j-1}$. Now $P_{9}(5.33)=0.025709673...$, $P_{9}(5.56)=0.047052108...$, and $P_{9}(5.72)=0.034577105...$  so in these points $P_{9}-\de>0$. In the interval [5.33, 5.56] we get for the integral mean of $|p^{(j)}|$ (estimating with total variation, as in Lemma \ref{l:dIVaround523}), $I_{p^{(j)}}>\max(|p^{(j)}(5.33)|, |p^{(j)}(5.56)|)$ for $j=1,2$ (see Table \ref{table:Taylor5525}). The function has to attain this estimated integral mean value, so it cannot be monotonic. On the other hand $p^{(j)}(5.33)<0$ for $j=3,\dots,9$. But $p^{(9)}$ is a constant, that is negative in the entire interval, hence $p^{(j)}$ are also negative in the whole interval for $j=3,\dots,8$. It follows that $p''$ is decreasing in the interval, which is a contradiction.

In case of the interval [5.56, 5.72] the process is similar: $I_{p'}>\max(|p'(5.56)|, |p'(5.72)|)$, and $p^{(j)}(5.56)<0$ for $j=2,\dots,9$, while $p^{(9)}$ is a constant, so $p^{(j)}$ are also negative in the interval for $j=2,\dots,9$. So $p'$ should be monotonic in the interval, and it is a contradiction.
\end{proof}

\begin{table}[h!]
\caption{Estimates for values of $p^{(j)}(5.33)$, $p^{(j)}(5.56)$, $p^{(j)}(5.72)$ and total variation, integral mean of $p^{(j)}$ on $[5.33, 5.56]$ and $[5.56, 5.72]$ for $j=0,\dots,9$}.
\label{table:Taylor5525}
\begin{center}
\begin{tabular}{|p{5pt}|p{60pt}|p{60pt}|p{60pt}|p{60pt}|p{60pt}|p{60pt}|p{60pt}|}
%%%\toprule
$j$ \qquad & $p^{(j)}(5.33)$ & $p^{(j)}(5.56)$ & $p^{(j)}(5.72)$  & \small{$\Var(p^{(j)})$ in  [5.33, 5.56]} & \small{mean $I_{p^{(j)}}$ in [5.33, 5.56]} &
\small{$\Var(p^{(j)})$ in [5.56, 5.72]} & \small{mean $I_{p^{(j)}}$ in [5.56, 5.72]}
\\ \hline
0 & \small{$0.0257096753$} & \small{$0.047052108$} & \small{$0.034577105$} & \small{$0.0478507836$} &  & \small{$0.0567182135$} &\\
1 & \small{$0.102950466$} & \small{$0.043853873$} & \small{$-0.260773968$} & \small{$0.269289432$} & \small{$0.208046885$} & & \small{$0.354488834$} \\
2 & \small{$0.12835791$} & \small{$-0.915663374$} & \small{$-3.226753649$} & & \small{$1.170823618$} & & \\
3 & \small{$-1.609886707$} & \small{$-8.882443109$} & \small{$-21.52543175$} & & & & \\
4 & \small{$-15.96198625$} & \small{$-53.38561447$} & \small{$-110.7051556$} & & & & \\
5 & \small{$-93.94395303$} & \small{$-255.0722573$} & \small{$-483.1895366$} & & & & \\
6 & \small{$-427.8265683$} & \small{$-1051.102162$} & \small{$-1870.009287$} & & & & \\
7 & \small{$-1864.435452$} & \small{$-3894.340881$} & \small{$-6506.045571$} & & & & \\
8 & \small{$-4404.08587$} & \small{$-13,247.2656$} & \small{$-19,399.0429$} & & & & \\
9 & \small{$-38,448.6078$} & \small{$-38,448.6078$} & \small{$-38,448.6078$} & & & & \\

%% \bottomrule
\end{tabular}
\end{center}\bigskip
\end{table}\bigskip

The last step is to prove that $d$ is concave in the interval $[5.72, 6]$.
\begin{lemma}\label{l:d2ndclose6} We have $d''(t)<0$ for $5.72\leq t\leq 6$.
\end{lemma}
Numerical tabulation of values give that $d''$ is decreasing from $d''(5.72)\approx -0.260774...$ to even more negative values as $t$ increases from 5.72 to 6. In the interval $[5.72, 6]$ the second derivative of $d(t)$ has the Taylor-approximation
\begin{align}\label{eq:dVTaylor586}
d''(t)&=\sum_{j=0}^n \frac{d^{(j+2)}(5.86)}{j!}\left(t-5.86\right)^j +R_{n}(d'',5.86,t),\qquad {\rm where} \\\ \notag
& R_{n}(d'',5.86,t):=\frac{d^{(n+3)}(\xi)}{(n+1)!}\left(t-5.86\right)^{n+1}.
\end{align}
Therefore instead of \cite[(36)]{Krenci2} we can use
\begin{align}\label{eq:Rd2totod}
|R_n(d'',5.86,t)|& \leq \frac{\|H_{\xi,n+3,+}\|_{L^1[0,1/2]} + \|H_{\xi,n+3,-}\|_{L^1[0,1/2]}}{(n+1)!}\cdot 0.14^{n+1} \notag \\ &\leq  \frac{\frac12\|H_{\xi,n+3,+}\|_\infty + \frac12\|H_{\xi,n+3,-}\|_\infty }{(n+1)!}\cdot 0.14^{n+1}\\ & \leq \frac{\max_{|\xi-5.86|\leq 0.14} \|H_{\xi,n+3,+}\|_\infty + \max_{|\xi-5.86|\leq 0.14} \|H_{\xi,n+3,-}\|_\infty}{(n+1)!}\cdot 0.14^{n+1}.\notag
\end{align}
So once again we need to maximize \eqref{eq:Hdef}, that is functions of the type $|\log v|^m v^{\xi}$, on $[0,9]$. From \cite[Lemma 6]{Krenci2} it follows
\begin{equation}\label{eq:maxmax586}
\max_{|\xi-5.86|\leq 0.14} \|H_{\xi,n+3,\pm}(x)\|_{\infty} \leq  9^{6} \log^{n+3} 9.
\end{equation}
We chose $n=8$ then $\|H_{\xi,n+3,\pm}(x)\|_{\infty} \leq 3,062,485,120$, for this case the Lagrange remainder term \eqref{eq:Rd2totod} of the Taylor formula \eqref{eq:dVTaylor586} can be estimated as
$|R_n(d'',t)|\leq 0.011209281\dots <0.00035=:\de_{9}$.

As before, the Taylor coefficients $d_{j+2}(5.86)$ cannot be obtained exactly, but only with some error, due to the necessity of some kind of numerical integration in the computation of the formula \eqref{eq:djdef}. Hence we must set the partial errors $\de_0,\dots,\de_{8}$ with $\sum_{j=0}^{9}\de_j <\de:=0.2494$, say, so that $d''(t)< P_n(t) +\de$ for
\begin{equation}\label{eq:Pndef586}
P_n(t):=\sum_{j=0}^n \frac{\overline{d}_j}{j!}\left(t-5.86\right)^j.
\end{equation}
The analogous criteria to \cite[(37)]{Krenci2} now has the form:
\begin{equation}\label{eq:djoverbarcriteria586}
\left\|\frac{d^{(j+2)}(5.86)-\overline{d}_j}{j!}\left(t-5.86\right)^j\right\|_\infty =\frac{\left|d^{(j+2)}(5.86)-\overline{d}_j\right|}{j!}\cdot0.14^j< \delta_j\qquad (j=0,1,\dots,n).
\end{equation}
That the termwise error \eqref{eq:djoverbarcriteria586} would not exceed $\de_j$ will be guaranteed by $N_j$ step quadrature approximation of the two integrals in \eqref{eq:djdef} defining $d^{(j+2)}(5.86)$ with prescribed error $\eta_j$ each. Therefore, we set $\eta_j:=\de_j j!/(2\cdot0.14^j)$, and note that in order to have \eqref{eq:djoverbarcriteria586} \begin{equation}\label{eq:Njchoice586}
N_j > N_j^{\star}:=\sqrt[4]{\frac{\|H^{IV}_{5.86,j+2,\pm}\|_\infty}{60\cdot 2^{10} \eta_j}} = \sqrt[4]{\frac{\|H^{IV}_{5.86,j+2,\pm}\|_\infty2\cdot0.14^j}{60\cdot 2^{10} j!  \de_j}}
\end{equation}
suffices by the integral formula \eqref{eq:quadrature} and \cite[Lemma 5]{Krenci2}. That is, we must estimate $\|H^{IV}_{5.86,j+2,\pm}\|_\infty$ for $j=0,\dots,8$ and thus find appropriate values of $N_j^{\star}$.

\bigskip\begin{lemma}\label{l:HIVnorm586} For $j=0,\dots,8$ we have the numerical estimates of Table \ref{table:HIVnormandNj586} for the values of $\|H^{IV}_{5.86,j,\pm}\|_\infty$. Setting $\de_j$ as seeing in the table for $j=0,\dots,8$ and $\de_{9}=0.00035$, the approximate quadrature of order $N_j\geq N_j^{\star}$ with the listed values of $N_j^{\star}$ yield the approximate values $\overline{d}_j$ as listed in Table \ref{table:HIVnormandNj586}, admitting the error estimates \eqref{eq:djoverbarcriteria586} for $j=0,\dots,9$. Furthermore, $\|R_{9}(d'',t)\|_{\infty} <0.01121=:\de_{9}$ and thus with the approximate Taylor polynomial $P_{8}(t)$ defined in \eqref{eq:Pndef586} the approximation $|d''(t)-P_{8}(t)|<\de:=0.2494$ holds uniformly for $ t \in [5.72,6]$.
\begin{table}[h!]
\caption{Estimates for values of $\|H^{IV}_{5.86,j+2,\pm}\|_\infty$ and $\de_j$, $N_j^{\star}$ and $\overline{d_j}$ for $j=0,\dots,8$.}
\label{table:HIVnormandNj586}
\begin{center}
\begin{tabular}{|c|c|c|c|c|}
%%%\toprule
$j$ \qquad & $\|H^{IV}_{5.86,j+2,\pm}\|_\infty$ & $\de_j$ & $N_j^{\star}$ & $\overline{d_j}$\\
%%% \midrule
0 & $ 1.07968\cdot 10^{15}$ & 0.16 & 485 & -0.982761617\\
1 & $ 2.93801 \cdot 10^{15}$ & 0.062 & 483 & -7.57978318\\
2 & $ 7.91604 \cdot 10^{15}$ & 0.015 & 453 & -42.74047825\\
3 & $ 2.1135 \cdot 10^{16}$ & 0.002 & 446 & -200.2495965\\
4 & $ 5.59555 \cdot 10^{16}$ & 0.002 & 246 & -823.1734963\\
5 & $ 1.46998 \cdot 10^{17}$ & 0.002 & 128 & -3064.925687\\
6 & $ 3.83405 \cdot 10^{17}$ & 0.002 & 64 & -10,561.40925\\
7 & $ 9.93361 \cdot 10^{17}$ & 0.002 & 31 & -34,212.60072\\
8 & $ 2.55779 \cdot 10^{17}$ & 0.002 & 14 & -105,414.5993\\

%% \bottomrule
\end{tabular}
\end{center}
\end{table}
\end{lemma}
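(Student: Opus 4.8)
The argument will follow, almost verbatim, the pattern of the proof of Lemma \ref{l:HIVnorm5065}, only with the Taylor expansion now centred at $t_0=5.86$ and of radius $r_0=0.14$. First I would substitute $t=5.86$ into the general pointwise estimate \eqref{eq:HIVgeneralestimatewMmin}: collecting the polynomial-in-$t$ coefficients at $t=5.86$ and writing $v:=G(x)\in[0,9]$, $\ell:=|\log v|$, this yields an explicit bound of the form $|H^{IV}_{5.86,j,\pm}(x)|\le \sum c\, v^{s}\ell^{m}$ with $s\in\{1.86,2.86,3.86,4.86\}$, $m\le j$, and concrete numerical coefficients $c$ coming from the constants $959{,}512{,}576$, $1{,}263{,}820{,}800$, $335{,}840{,}000$, $11{,}600{,}000$ of \eqref{eq:HIVgeneralestimatewMmin}.

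Next I would invoke \cite[Lemma 6]{Krenci2} to locate the maximum of each term $v^{s}\ell^{m}$ on $[0,9]$. Since the degree-$8$ Taylor polynomial of $d''$ only involves the derivatives $d^{(j+2)}$ for $j\le 8$, the log index $m$ never exceeds $j+2\le 10$, while the smallest exponent appearing is $s=t-4=1.86$; hence $m/s\le 10/1.86<7.94\approx 1/\sigma_0$, so \emph{every} term attains its maximum at the right endpoint $v=9$. (This is what makes the present case cleaner than the $t_0=5.065$ case of Lemma \ref{l:HIVnorm5065}, where two exceptional terms had interior maxima.) Consequently $\|H^{IV}_{5.86,j,\pm}\|_\infty$ is obtained by simply plugging in $v=9$, $\ell=\log 9$, which produces the second column of Table \ref{table:HIVnormandNj586}.

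With these norms in hand I would fix the partial errors $\de_0,\dots,\de_8$ as in the table, put $\eta_j:=\de_j\,j!/(2\cdot 0.14^{j})$, read off $N_j^{\star}$ from \eqref{eq:Njchoice586}, and check $N_j^{\star}\le 485<500$ for $j=0,\dots,8$. Then the fourth-order quadrature \eqref{eq:quadrature} (that is, \cite[Lemma 5]{Krenci2}) applied with $N_j\ge N_j^{\star}$ nodes to the two integrals of \eqref{eq:djdef} defining $d^{(j+2)}(5.86)$ delivers the tabulated $\overline{d_j}$, and by the choice of $\eta_j$ the termwise error \eqref{eq:djoverbarcriteria586} stays below $\de_j$. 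For the tail I would take $n=8$, use $\|H_{\xi,n+3,\pm}\|_\infty\le 9^{6}\log^{n+3}9$ from \eqref{eq:maxmax586}, and feed it into \eqref{eq:Rd2totod} to get $\|R_{9}(d'',\cdot)\|_\infty<\de_{9}$ on $[5.72,6]$. Summing, $|d''(t)-P_{8}(t)|\le\sum_{j=0}^{8}\de_j+\de_{9}<0.2494$ uniformly on $[5.72,6]$, which is the claim; the approximate Taylor polynomial being the $P_8(t)$ of \eqref{eq:Pndef586}.

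The scheme is entirely deductive, so there is no conceptual obstacle; the delicate points are the endpoint-maximum check in the second paragraph — one must verify that the bound $m/s<1/\sigma_0$ really covers every occurring pair $(s,m)$, so that no $v^s\ell^m$ term hides an interior maximum — and the final error bookkeeping, namely confirming that with the $\de_j$ of the table one genuinely has $\sum_{j=0}^{9}\de_j<0.2494$ and each $N_j^\star\le 500$. These amount to recomputing the numerical constants of Table \ref{table:HIVnormandNj586} honestly and checking that every stated inequality is strict.
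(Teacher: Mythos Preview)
Your proposal is correct and follows essentially the same route as the paper's own proof: substitute $t=5.86$ into \eqref{eq:HIVgeneralestimatewMmin}, verify via \cite[Lemma~6]{Krenci2} that every occurring term $v^{s}\ell^{m}$ (with $s\ge 1.86$, $m\le 10$) attains its maximum at $v=9$, evaluate there to produce the table, and then apply the quadrature \eqref{eq:quadrature} with $N=500\ge N_j^{\star}$. Your observation that here, unlike in the $t_0=5.065$ case, \emph{all} terms are maximized at the right endpoint is exactly the simplification the paper exploits.
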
\bigskip
\begin{proof} We start with the numerical upper estimation of $H^{IV}_{5.86,j,\pm}(x)$ for $5.72\leq x\leq 6$. In the general formula \eqref{eq:HIVgeneralestimatewMmin} now we consider the case $t=5.86$.
\begin{align}
|H^{IV}(x)|&\leq  959,512,576 \cdot v^{1.86}\Big\{ j(j-1)(j-2)(j-3)\ell^{j-4} + 117.44j(j-1)(j-2)\ell^{j-3}
\notag \\ & \qquad +111.5576j(j-1)\ell^{j-2} +309.72742j\ell^{j-1}
+ 314.40339216 \ell^j\Big\}
\notag \\& +  1,263,820,800 \cdot v^{2.86} \Big\{ j(j-1)(j-2)\ell^{j-3}+14.58j(j-1)\ell^{j-2}
+69.8588j \ell^{j-1}
\notag \\&\qquad +109.931256\ell^j \Big\}
+  11,600,000 \cdot v^{4.86}\left\{j\ell^{j-1}+5.86\ell^j \right\}
\\& +  335,840,000 \cdot v^{3.86} \Big\{j(j-1)\ell^{j-2}+10.72j\ell^{j-1}+28.4796\ell^j \Big\} \notag .
\end{align}
Applying that all the occurring functions of type $v^s \ell^m$ have maximum on $[0,9]$ at the right endpoint $v=9$ in view of \cite[Lemma 6]{Krenci2}, we can further estimate
substituting $\ell=\log 9$ and  $v=9$. We collect the resulting numerical estimates of $\|H^{IV}\|_\infty$ in Table \ref{table:HIVnormandNj586} and list the corresponding values of $N_j^{\star}$ and $\overline{d_j}$, too, as given by the formulas \eqref{eq:Njchoice586} and the numerical quadrature formula \eqref{eq:quadrature} with step size $h=0.001$, i.e. $N=N_j=500$ steps.\end{proof}
\begin{lemma}\label{l:dIVaround586} We have $d''(t)<0$ for all $5.72 \leq t \leq 6$.
\end{lemma}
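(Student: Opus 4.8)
The plan is to mirror the argument of Lemma~\ref{l:dIVaround5065}, now with $d''$ in the role of $d^{IV}$. By Lemma~\ref{l:HIVnorm586} we have the uniform estimate $|d''(t)-P_{8}(t)|<\de:=0.2494$ for all $t\in[5.72,6]$, where $P_{8}$ is the approximate Taylor polynomial \eqref{eq:Pndef586} centred at $t_{0}=5.86$ (radius $0.14$) with the coefficients $\overline{d}_{j}$ of Table~\ref{table:HIVnormandNj586}. Consequently it suffices to prove that $p(t):=P_{8}(t)+\de<0$ on $[5.72,6]$, for then $d''(t)<P_{8}(t)+\de=p(t)<0$ throughout the interval.

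First I would evaluate $p$ together with its derivatives at the left endpoint $t=5.72$, which sits at distance $0.14$ from the centre, so that $P_{8}^{(j)}(5.72)=\sum_{i=j}^{8}\frac{\overline{d}_{i}}{(i-j)!}(-0.14)^{i-j}$ are explicit finite sums in the tabulated $\overline{d}_{i}$. The value $p(5.72)=P_{8}(5.72)+\de$ works out to roughly $-0.2608+0.2494<0$; the derivatives $P_{8}^{(j)}(5.72)$ for $j=1,\dots,7$ all come out negative (consistent with the signs of $d'''(5.72),d^{(4)}(5.72),\dots,d^{(9)}(5.72)$ visible in the neighbouring Table~\ref{table:Taylor5525}); and $P_{8}^{(8)}(t)\equiv\overline{d}_{8}=-105{,}414.5993<0$ is constant.

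Then I would propagate signs downward through the derivatives, exactly as in Lemma~\ref{l:dIVaround5065}. Since $P_{8}^{(8)}<0$ is constant, $P_{8}^{(7)}$ is strictly decreasing on $[5.72,6]$ and hence stays below $P_{8}^{(7)}(5.72)<0$; descending one order at a time, each $P_{8}^{(j)}$, $j=7,6,\dots,1$, is strictly decreasing on $[5.72,6]$ and negative at $5.72$, so it is negative throughout. In particular $p'(t)=P_{8}'(t)<0$, so $p$ is strictly decreasing, whence $p(t)\le p(5.72)=P_{8}(5.72)+\de<0$ for every $t\in[5.72,6]$, which proves the claim. Note that here, unlike in the middle subintervals of Lemmas~\ref{l:dIVaround523} and~\ref{l:dIVaround5525}, every required endpoint sign is favourable, so no variance / integral-mean refinement is needed.

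The delicate point is not the logic but the slenderness of the numerical margin: since $|d''(5.72)|\approx 0.2608$ while the total tolerated error is $\de=0.2494$, the slack in $p(5.72)$ is only about $0.011$. Thus the error allocation behind Lemma~\ref{l:HIVnorm586} --- the quadrature step numbers $N_{j}\ge N_{j}^{\star}$, the partial termwise errors $\de_{j}$ of Table~\ref{table:HIVnormandNj586}, and the Lagrange remainder $\de_{9}$ --- must be arranged so that $\sum_{j}\de_{j}$ stays safely below $|d''(5.72)|$; one must also verify the seven auxiliary inequalities $P_{8}^{(j)}(5.72)<0$ ($j=1,\dots,7$) that license the descent. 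Both are routine polynomial evaluations once the tables are in hand.
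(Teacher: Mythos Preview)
Your proposal is correct and follows essentially the same argument as the paper: define $p(t)=P_{8}(t)+\de$, verify that $p(5.72)<0$ and that $P_{8}^{(j)}(5.72)<0$ for $j=1,\dots,7$, note that $P_{8}^{(8)}\equiv\overline{d}_{8}<0$ is constant, and then descend through the derivatives to conclude $p<0$ on $[5.72,6]$. The paper additionally records the explicit numerical values $p^{(j)}(5.72)$ for $j=1,\dots,7$, but the structure of the argument is identical.
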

\begin{proof} We approximate $d''(t)$ by the polynomial $P_{8}(t)$ constructed in \eqref{eq:Pndef586} as the approximate value of the order 8 Taylor polynomial of $d''$ around $t_0:=5.86$. As the error is at most $\de$, it suffices to show that $p(t):=P_{8}(t)+\de<0$ in $[5.72,6]$. Now $P_{8}(5.72)=-0.2607741259...$ so $P_{8}(5.72)+\de<0$. Moreover, $p'(t)=P_{8}'(t)=\sum_{j=1}^{8} \dfrac{\overline{d}_j}{(j-1)!} (t-5.86)^{j-1}$ and $p'(5.72)=-3.226759...<0$. From the explicit formula of $p(t)$ we consecutively compute also $p''(5.72)=-21.525764...<0$, $p'''(5.72)=-110.671188...<0$, $p^{(4)}(5.72)=-483.626484...<0$, $p^{(5)}(5.72)=-1873.40227...<0$, $p^{(6)}(5.72)=-6804.70822...<0$, $p^{(7)}(5.72)=-19,454.5568...<0$. Finally, we arrive at $p^{(8)}(t)=\overline{d}_8$=-105,414.6... We have already checked that $p^{(j)}(5.72)<0$ for $j=0\dots 7$, so in order to conclude $p(t)>0$ for $5.72\leq t\leq 6$ it suffices to show $p^{(8)}(t)<0$ in the given interval. However, $p^{(8)}$ is constant, so $p(t)<0$ for all $t \in \RR$.  It follows that also $p(t)>0$ for all $5.72 \leq t\leq 6$.
\end{proof}

\begin{center}
\textsc{Acknowledgement}
\end{center}
The author is grateful to Professor Szil\'ard R\'ev\'esz for continuous guidance and encouragement. Also the author gratefully acknowledges the distinction and support, given to him in form of the Ames Award, for the paper \cite{Krenci}
\bigskip
\bigskip
\bigskip

%%%%%%%%%%%%%%%%%%%%%%%%%%%%%%%%%%%%%%%%%%%%%%%%%%%%%%%%%%%
%%%%%%%%%%%%%%%%%%%%%%%%%%%%%%%%%%%%%%%%%%%%%%%%%%%%%%%%%%%
%%%%%%%%%%%%%%%%%%%%%%%%%%%%%%%%%%%%%%%%%%%%%%%%%%%%%%%%%%%
\comment{
%%%%%%%%%%%%%%%%%%%%%%%%%%%%%%%%%%%%%%%%%%%%%%%%%%%%%%%%%%%
%%%%%%%%%%%%%%%%%%%%%%%%%%%%%%%%%%%%%%%%%%%%%%%%%%%%%%%%%%%
%%%%%%%%%%%%%%%%%%%%%%%%%%%%%%%%%%%%%%%%%%%%%%%%%%%%%%%%%%%

\begin{lemma}\label{l:d2ndder5benpos} We have $d''(5)>0$.
\end{lemma}
\begin{remark} By numerical calculation, $d''(5)\approx 0.033815603$.
\end{remark}
\begin{proof}
Now we also use the improved quadrature. We substitute the norm estimates of \eqref{eq:Mmk=5} into \eqref{eq:HIVgeneralformula} partially, without estimation by $M_1$, wherever $G'$ occurs, in order to take advantage of our quadrature utilizing expressions of the form $G^tG'\log^j G$. We thus obtain
\begin{align}\label{eq:HIVj2t5rafkos}\notag
|H^{IV}(x)|&\leq G |G'| M_1^3\Big\{142 + 308 |\log G|+ 120 |\log G|^2 \Big\}
+ 6\cdot G^{2}|G'| M_1 M_2 \Big\{24 + 94 |\log G|+ 60 |\log G|^2\Big\}
\\& + G^{3}|G'| 4M_3 \Big\{2+18|\log G|+ 20 |\log G|^2 \Big\} + G^{3} 3M_2^2 \Big\{2+18|\log G|+ 20 |\log G|^2 \Big\} \\&+ G^{4} M_4 \left\{2|\log G|+5|\log G|^2 \right\} .
\end{align}
Now we bring the value $N$ under 400. Similarly as previously, considering sums of $|H^{IV}(\xi_n)|$, this estimate splits to two type of terms, according to the presence of $|G'|$ or not; the last two -- denoted by $P_1$ --, those without $|G'|$, will be estimated by the terms of Lemma \ref{l:improvedquadrature}, and those with $|G'|$ -- the first three, denoted by $P_2$ -- by Lemma \ref{l:superquadrature}. So we insert values of $M_m$ from\eqref{eq:Mmk=5} and apply lemmas with value$N=400$, and calculating $P_2$we used estimation $|\log(G)|=\log(9)$, get

\begin{align*}\notag
P_1&=773218156.5 \cdot 112281.0877 + 1677121072 \cdot 247561.0633+ 653423794.3 \cdot 543327.2828
\\& + 3172023815.2 \cdot 580004.2369 + 673759943 \cdot 1274826.713+ 430059538.1 \cdot 2800456.322
\\& + 2222529.912 \cdot 3550834.04 + 20002769.21 \cdot 7802264.588 + 22225299.12 \cdot 17142841.5=3.56506 \cdot 10^{15}
\end{align*}
\begin{align*}\notag
P_2&=276648052.7 \cdot 99.37875 + 5470721106 \cdot 99.37875 + 13356003190 \cdot 99.37875
\\& + 50654825.06 \cdot 645.37875 + 278250066.4 \cdot 645.37875=2.11074 \cdot 10^{11}
\end{align*}
Calculating $P_2$ we used estimation $|\log(G)|=\log(9)$. Applying lemmas \ref{l:improvedquadrature} and \ref{l:superquadrature} we get
\begin{equation}\label{eq:HIVj2t5rafkonumeric}
\left| \int_0^{1/2} \f - \sum_{n=1}^{400} \left\{ \f\left(\frac{2n-1}{4\cdot400}\right) \frac{1}{2\cdot400} + \f'' \left(\frac{2n-1}{4\cdot400}\right) \frac{1}{192\cdot400^3} \right\} \right|
\leq \frac{P_1+400\cdot P_2}{60\cdot2^{10} 400^5}=0.00700848...
\end{equation}
This value should be under 0.016, as calculating the quadrature formula with $N=500$ gives the approximate value 0.033815603, and the estimation is applied for both $G_+$ and $G_-$. It obtains, and the limit is at $N=328$, than the error reaches 0.016.\end{proof}

\bigskip
\bigskip
\bigskip
\centerline{* * * * * }
\bigskip
\bigskip
\bigskip

\begin{lemma}\label{l:diff3ndder5benpoz} We have $d'''(5)>0$.
\end{lemma}
\begin{remark} By numerical calculation, $d'''(5)\approx 0.183547634...$.
\end{remark}
\begin{proof}
>From \eqref{eq:HIVgeneralestimateLARGE}\rev{Better \eqref{eq:HIVgeneralestimatewMmin}} with $t=5, j=3$
\begin{align}
|H^{IV}(x)|&\leq vM_1^4\Big\{  14\cdot6 +71\cdot6\ell +77\cdot6\ell^2
+ 120 \ell^3\Big\}
\notag \\& + 6\cdot v^2M_1^2 M_2 \Big\{ 6+3\cdot4\cdot6\ell
+47\cdot3 \ell^2+60\ell^3 \Big\}
%%%% + 4\cdot v^{t-2}M_1 M_3 \Big\{ j(j-1)\ell^{j-2}+(2t-1)j\ell^{j-1}+t(t-1)\ell^j\Big\}
+ v^4 M_4 \left\{3\ell^2+5\ell^3 \right\}
\\& + v^3 (3M_2^2+ 4M_1 M_3) \Big\{6\ell+9\cdot3\ell^2+20\ell^3 \Big\} \notag .
\end{align}
Using \eqref{eq:Mmk=5}, and estimating $\ell$ by $\log(9)$ and  $v$ by 9 (as from \eqref{l:lmvsestimate} it follows that the maxima of all the occurring functions of type  $v^s| \log v|^m$ here have maxima at the right endpoint $v=9$), we obtain: $|H^{IV}(x)|\leq 2.81977\cdot10^{14}$.

To bring this down below $\delta=0.091$, we need to chose the step number $N$ as large as to have
$$
\frac{2.82 \cdot 10^{14}}{60\cdot2^{10} N^4}<\delta
$$
i.e.
$$
N\geq N_0:=  \sqrt[4]{\frac{2.82 \cdot 10^{14}}{60\cdot 2^{10}\cdot0.091}}\approx 474....
$$
Calculating the quadrature formula with $N=500$, we obtain the
approximate value 0.18354763424..., whence  $d'''(5)> 0.18354763424...
-2 \cdot0.091> 0$.
\end{proof}

\bigskip
\bigskip
\bigskip
\centerline{* * * * * }
\bigskip
\bigskip
\bigskip

After examining the values of derivatives of $d$ at the left endpoint $t=5$, now we divide the interval $[5,6]$ to 3 parts. First we will prove in Lemma \ref{l:dIVaround5065} that $d^{IV}(t)>0$ in $[5, 5.13]$. In view of the above proven Lemmas \ref{l:dprime5benpos}, \ref{l:d2ndder5benpos} and \ref{l:diff3ndder5benpoz}, it follows, that in this interval also $d'''(t), d''(t), d'(t)>0$.

Next we will consider $d'$ in the interval $[5.13, 5.72]$. Lemma \ref{l:dprimearound5425} will furnish $d'>0$ also in this domain. Consequently, $d$ is increasing all along $[5,5.72]$, and as $d(5)=0$, it will be positive in $(5,5.72]$. Finally we will show Lemma \ref{l:d2ndclose6}, giving that $d(t)$ is concave in the interval $[5.72, 6]$. As $d(5.72)>0$ and $d(6)=0$, this entails that the function remains positive on $[5.72, 6)$, too, whence $d>0$ on the whole of $(5,6)$.

\bigskip
\centerline{* * * * * }
\bigskip

\centerline{$d^{IV}(t) : [5,5.13]$}

\bigskip
\centerline{* * * * * }
\bigskip

In the interval $[5, 5.13]$ the fourth derivative of $d(t)$ has the Taylor-approximation
\begin{align}\label{eq:dVTaylor5065}
d^{IV}(t)&=\sum_{j=0}^n \frac{d^{(j+4)}(5.065)}{j!}\left(t-5.065\right)^j +R_{n}(d^{IV},5.065,t),\qquad {\rm where} \\\ \notag
& R_{n}(d^{IV},5.065,t):=\frac{d^{(n+5)}(\xi)}{(n+1)!}\left(t-5.065\right)^{n+1}.
\end{align}
Therefore instead of \eqref{eq:Rd5t} we can use
\begin{align}\label{eq:Rd4totod}
|R_n(d^{IV},5.065,t)|& \leq \frac{\|H_{\xi,n+5,+}\|_{L^1[0,1/2]} + \|H_{\xi,n+5,-}\|_{L^1[0,1/2]}}{(n+1)!}\cdot 0.065^{n+1} \notag \\ &\leq  \frac{\frac12\|H_{\xi,n+5,+}\|_\infty + \frac12\|H_{\xi,n+5,-}\|_\infty }{(n+1)!}\cdot 0.065^{n+1}\\ & \leq \frac{\max_{|\xi-5.065|\leq 0.065} \|H_{\xi,n+5,+}\|_\infty + \max_{|\xi-5.065|\leq 0.065} \|H_{\xi,n+5,-}\|_\infty}{(n+1)!}\cdot 0.065^{n+1}.\notag
\end{align}
So once again we need to maximize \eqref{eq:Hdef}, that is functions of the type $|\log v|^m v^{\xi}$, on $[0,9]$. From \eqref{l:lmvsestimate} follows:
\begin{equation}\label{eq:maxmax5065}
\max_{|\xi-5.065|\leq 0.065} \|H_{\xi,n+5,\pm}(x)\|_{\infty} \leq  9^{5.13} \log^{n+5} 9
\end{equation}
We chose $n=5$ then $\|H_{\xi,n+5,\pm}(x)\|_{\infty} \leq 206067051$, for this case the Lagrange remainder term \eqref{eq:Rd4totod} of the Taylor formula \eqref{eq:dVTaylor5065} can be estimated as
$|R_n(d^{IV},t)|\leq 0.021585206\dots <0.0216=:\de_{6}$.

As before, the Taylor coefficients $d_{j+4}(5.065)$ cannot be obtained exactly, but only with some error, due to the necessity of some kind of numerical integration in the computation of the formula \eqref{eq:djdef}. Hence we must set the partial errors $\de_0,\dots,\de_{5}$ with $\sum_{j=0}^{6}\de_j <\de:=0.0216$, say, so that $d^{IV}(t)< P_n(t) +\de$ for
\begin{equation}\label{eq:Pndef5065}
P_n(t):=\sum_{j=0}^n \frac{\overline{d}_j}{j!}\left(t-5.065\right)^j.
\end{equation}
The analogous criteria to \eqref{eq:djoverbarcriteria} now has the form:
\begin{equation}\label{eq:djoverbarcriteria5065}
\left\|\frac{d^{(j+4)}(5.065)-\overline{d}_j}{j!}\left(t-5.065\right)^j\right\|_\infty =\frac{\left|d^{(j+4)}(5.065)-\overline{d}_j\right|}{j!}\cdot0.065^j< \delta_j\qquad (j=0,1,\dots,n).
\end{equation}
That the termwise error \eqref{eq:djoverbarcriteria425} would not exceed $\de_j$ will be guaranteed by $N_j$ step quadrature approximation of the two integrals in \eqref{eq:djdef} defining $d^{(j+4)}(5.065)$ with prescribed error $\eta_j$ each. Therefore, we set $\eta_j:=\de_j j!/(2\cdot0.065^j)$, and note that in order to have \eqref{eq:djoverbarcriteria5065} \begin{equation}\label{eq:Njchoice5065}
N_j > N_j^{\star}:=\sqrt[4]{\frac{\|H^{IV}_{5.065,j+4,\pm}\|_\infty}{60\cdot 2^{10} \eta_j}} = \sqrt[4]{\frac{\|H^{IV}_{5.065,j+4,\pm}\|_\infty2\cdot0.065^j}{60\cdot 2^{10} j!  \de_j}}
\end{equation}
suffices by the integral formula \eqref{eq:quadrature} and Lemma \ref{l:quadrature}. That is, we must estimate $\|H^{IV}_{5.065,j+4,\pm}\|_\infty$ for $j=0,\dots,5$ and thus find appropriate values of $N_j^{\star}$.

\bigskip\begin{lemma}\label{l:HIVnorm5065} For $j=0,\dots,5$ we have the numerical estimates of Table \ref{table:HIVnormandNj5065} for the values of $\|H^{IV}_{5.065,j,\pm}\|_\infty$. Setting $\de_j$ as seeing in the table for $j=0,\dots,5$ and $\de_{6}=0.0216$, the approximate quadrature of order $N_j\geq N_j^{\star}$ with the listed values of $N_j^{\star}$ yield the approximate values $\overline{d}_j$ as listed in Table \ref{table:HIVnormandNj5065}, admitting the error estimates \eqref{eq:djoverbarcriteria5065} for $j=0,\dots,6$. Furthermore, $\|R_{6}(d^{IV},t)\|_{\infty} <0.0216=:\de_{6}$ and thus with the approximate Taylor polynomial $P_{5}(t)$ defined in \eqref{eq:Pndef5065} the approximation $|d^{IV}(t)-P_{5}(t)|<\de:=0.1875$ holds uniformly for $ t \in [5,5.13]$.
\begin{table}[h!]\label{table:HIVnormandNj5065}
\caption{Estimates for values of $\|H^{IV}_{5.065,j+4,\pm}\|_\infty$ and values of $\de_j$, $N_j^{\star}$ and $\overline{d_j}$ for $j=0,\dots,5$.}
\begin{center}
\begin{tabular}{|c|c|c|c|c|}
%%%\toprule
$j$ \qquad & estimate for $\|H^{IV}_{5.065,j+4,\pm}\|_\infty$ & $\de_j$ & $N_j^{\star}$ & $\overline{d_j}$\\
%%% \midrule
0 & $ 9.28687\cdot 10^{14}$ & 0.13 & 492 & 0.381737508\\
1 & $ 2.5288 \cdot 10^{15}$ & 0.03 & 460 & -2.087768122\\
2 & $ 6.81644 \cdot 10^{15}$ & 0.005 & 392 & -23.85760346\\
3 & $ 1.82039 \cdot 10^{16}$ & 0.005 & 342 & -140.6261273\\
4 & $ 4.82014 \cdot 10^{16}$ & 0.0002 & 196 & -641.9545799\\
5 & $ 1.2663 \cdot 10^{17}$ & 0.0002 & 85 & -2521.387336\\

%% \bottomrule
\end{tabular}
\end{center}
\end{table}
\end{lemma}\bigskip
\begin{proof} We start with the numerical upper estimation of $H^{IV}_{5.065,j,\pm}(x)$ for $5\leq x\leq 5.13$. In the general formula \eqref{eq:HIVgeneralestimateLARGE}\rev{\eqref{eq:HIVgeneralestimatewMmin}} now we consider the case $t=5.065$.
\begin{align}
|H^{IV}(x)|&\leq v^{1,065}M_1^4\Big\{ j(j-1)(j-2)(j-3)\ell^{j-4} + 14.26j(j-1)(j-2)\ell^{j-3}
\notag \\ & \qquad +73.75535j(j-1)\ell^{j-2} +163.4085485j\ell^{j-1}
+ 130.313837600625 \ell^j\Big\}
\notag \\& + 6\cdot v^{2.065}M_1^2 M_2 \Big\{ j(j-1)(j-2)\ell^{j-3}+12.195j(j-1)\ell^{j-2}
+48.572675j \ell^{j-1}
\notag \\&\qquad +63.105974625\ell^j \Big\}
+ v^{4.065} M_4 \left\{j\ell^{j-1}+5.065\ell^j \right\}
\\& + v^{3.065} (3M_2^2+ 4M_1 M_3) \Big\{j(j-1)\ell^{j-2}+8.13j\ell^{j-1}+20.589225\ell^j \Big\} \notag .
\end{align}
Simply using the estimates \eqref{eq:Gpmadmnorm} of $M_1$, $M_2$, $M_3$ and $M_4$, together with applying that $\ell\leq \log(9)$ and $v\leq9$, we collect the resulting numerical estimates of $\|H^{IV}\|$ in Table \ref{table:HIVnormandNj5065} and list the corresponding values of $N_j^{\star}$ and $\overline{d_j}$, too, as given by the formula \eqref{eq:Njchoice5065} and the numerical quadrature formula \eqref{eq:quadrature} with step size $h=0.001$, i.e. $N=N_j=500$ steps.\end{proof}
\begin{lemma}\label{l:dIVaround5065} We have $d^{IV}(t)>0$ for all $5 \leq t \leq 5.13$.
\end{lemma}
\begin{proof} We approximate $d^{IV}(t)$ by the polynomial $P_{5}(t)$ constructed in \eqref{eq:Pndef5065} as the approximate value of the order 5 Taylor polynomial of $d^{IV}$ around $t_0:=5.065$. As the error is at most $\de$, it suffices to show that $p(t):=P_{5}(t)-\de>0$ in $[5,5.13]$. Now $P_{5}(5.13)=0.188694967...$ so $P_{5}(5.13)-\de>0$. Moreover, $p'(t)=P_{5}'(t)=\sum_{j=1}^{5} \dfrac{\overline{d}_j}{(j-1)!} (t-5.065)^{j-1}$ and $p'(5)=-3.966843184...<0$. From the explicit formula of $p(t)$ we consecutively compute also $p''(5)=-34.46983678...<0$, $p'''(5)=-187.6796057...<0$ and $p^{(4)}(5)=-805.8447568...<0$. Finally, we arrive at $p^{(5)}(t)=\overline{d}_5$=-2521.387336... We have already checked that $p^{(j)}(5)<0$ for $j=1\dots 4$, and $p(5.13)>0$, so in order to conclude $p(t)>0$ for $5\leq t\leq 5.13$ it suffices to show $p^{(5)}(t)<0$ in the given interval. However, $p^{(5)}$ is constant, so $p^{(5)}(t)<0$ for all $t \in \RR$.  It follows that also $p(t)>0$ for all $5 \leq t\leq 5.13$.
\end{proof}
\bigskip

\bigskip
\bigskip
\bigskip
\centerline{* * * * * }
\bigskip
\bigskip
\bigskip

\begin{lemma}\label{l:dprimearound5425} We have $d'(t)>0$ for all $t\in [5.13,5.72]$.
\end{lemma}
\begin{proof}
\end{proof}

\bigskip
\bigskip

\centerline{* * * * * }
\bigskip
\bigskip
\bigskip

\begin{lemma}\label{l:d2ndclose6} We have $d''(t)<0$ for $5.72\leq t\leq 6$.
\end{lemma}
In the interval $[5.72, 6]$ the second derivative of $d(t)$ has the Taylor-approximation
\begin{align}\label{eq:dVTaylor586}
d''(t)&=\sum_{j=0}^n \frac{d^{(j+2)}(5.86)}{j!}\left(t-5.86\right)^j +R_{n}(d'',5.86,t),\qquad {\rm where} \\\ \notag
& R_{n}(d'',5.86,t):=\frac{d^{(n+3)}(\xi)}{(n+1)!}\left(t-5.86\right)^{n+1}.
\end{align}
Therefore instead of \eqref{eq:Rd5t} we can use
\begin{align}\label{eq:Rd2totod}
|R_n(d'',5.86,t)|& \leq \frac{\|H_{\xi,n+3,+}\|_{L^1[0,1/2]} + \|H_{\xi,n+3,-}\|_{L^1[0,1/2]}}{(n+1)!}\cdot 0.14^{n+1} \notag \\ &\leq  \frac{\frac12\|H_{\xi,n+3,+}\|_\infty + \frac12\|H_{\xi,n+3,-}\|_\infty }{(n+1)!}\cdot 0.14^{n+1}\\ & \leq \frac{\max_{|\xi-5.86|\leq 0.14} \|H_{\xi,n+3,+}\|_\infty + \max_{|\xi-5.86|\leq 0.14} \|H_{\xi,n+3,-}\|_\infty}{(n+1)!}\cdot 0.14^{n+1}.\notag
\end{align}
So once again we need to maximize \eqref{eq:Hdef}, that is functions of the type $|\log v|^m v^{\xi}$, on $[0,9]$. From \eqref{l:lmvsestimate} follows:
\begin{equation}\label{eq:maxmax586}
\max_{|\xi-5.86|\leq 0.14} \|H_{\xi,n+3,\pm}(x)\|_{\infty} \leq  9^{6} \log^{n+3} 9
\end{equation}
We chose $n=8$ then $\|H_{\xi,n+3,\pm}(x)\|_{\infty} \leq 206067051$, for this case the Lagrange remainder term \eqref{eq:Rd2totod} of the Taylor formula \eqref{eq:dVTaylor586} can be estimated as
$|R_n(d'',t)|\leq 0.011209281\dots <0.01121=:\de_{6}$.

As before, the Taylor coefficients $d_{j+2}(5.86)$ cannot be obtained exactly, but only with some error, due to the necessity of some kind of numerical integration in the computation of the formula \eqref{eq:djdef}. Hence we must set the partial errors $\de_0,\dots,\de_{8}$ with $\sum_{j=0}^{9}\de_j <\de:=0.01121$, say, so that $d''(t)< P_n(t) +\de$ for
\begin{equation}\label{eq:Pndef586}
P_n(t):=\sum_{j=0}^n \frac{\overline{d}_j}{j!}\left(t-5.86\right)^j.
\end{equation}
The analogous criteria to \eqref{eq:djoverbarcriteria} now has the form:
\begin{equation}\label{eq:djoverbarcriteria586}
\left\|\frac{d^{(j+2)}(5.86)-\overline{d}_j}{j!}\left(t-5.86\right)^j\right\|_\infty =\frac{\left|d^{(j+2)}(5.86)-\overline{d}_j\right|}{j!}\cdot0.14^j< \delta_j\qquad (j=0,1,\dots,n).
\end{equation}
That the termwise error \eqref{eq:djoverbarcriteria425} would not exceed $\de_j$ will be guaranteed by $N_j$ step quadrature approximation of the two integrals in \eqref{eq:djdef} defining $d^{(j+2)}(5.86)$ with prescribed error $\eta_j$ each. Therefore, we set $\eta_j:=\de_j j!/(2\cdot0.14^j)$, and note that in order to have \eqref{eq:djoverbarcriteria586} \begin{equation}\label{eq:Njchoice586}
N_j > N_j^{\star}:=\sqrt[4]{\frac{\|H^{IV}_{5.86,j+2,\pm}\|_\infty}{60\cdot 2^{10} \eta_j}} = \sqrt[4]{\frac{\|H^{IV}_{5.86,j+2,\pm}\|_\infty2\cdot0.14^j}{60\cdot 2^{10} j!  \de_j}}
\end{equation}
suffices by the integral formula \eqref{eq:quadrature} and Lemma \ref{l:quadrature}. That is, we must estimate $\|H^{IV}_{5.86,j+2,\pm}\|_\infty$ for $j=0,\dots,8$ and thus find appropriate values of $N_j^{\star}$.

\bigskip\begin{lemma}\label{l:HIVnorm586} For $j=0,\dots,8$ we have the numerical estimates of Table \ref{table:HIVnormandNj586} for the values of $\|H^{IV}_{5.86,j,\pm}\|_\infty$. Setting $\de_j$ as seeing in the table for $j=0,\dots,8$ and $\de_{9}=0.01121$, the approximate quadrature of order $N_j\geq N_j^{\star}$ with the listed values of $N_j^{\star}$ yield the approximate values $\overline{d}_j$ as listed in Table \ref{table:HIVnormandNj586}, admitting the error estimates \eqref{eq:djoverbarcriteria586} for $j=0,\dots,9$. Furthermore, $\|R_{9}(d'',t)\|_{\infty} <0.01121=:\de_{9}$ and thus with the approximate Taylor polynomial $P_{8}(t)$ defined in \eqref{eq:Pndef586} the approximation $|d''(t)-P_{8}(t)|<\de:=0.26021$ holds uniformly for $ t \in [5.72,6]$.
\begin{table}[h!]\label{table:HIVnormandNj586}
\caption{Estimates for values of $\|H^{IV}_{5.86,j+2,\pm}\|_\infty$ and values of $\de_j$, $N_j^{\star}$ and $\overline{d_j}$ for $j=0,\dots,8$.}
\begin{center}
\begin{tabular}{|c|c|c|c|c|}
%%%\toprule
$j$ \qquad & estimate for $\|H^{IV}_{5.86,j+2,\pm}\|_\infty$ & $\de_j$ & $N_j^{\star}$ & $\overline{d_j}$\\
%%% \midrule
0 & $ 1.07968\cdot 10^{15}$ & 0.16 & 485 & -0.982761617\\
1 & $ 2.93801 \cdot 10^{15}$ & 0.062 & 483 & -7.57978318\\
2 & $ 7.91604 \cdot 10^{15}$ & 0.015 & 453 & -42.74047825\\
3 & $ 2.1135 \cdot 10^{16}$ & 0.002 & 446 & -200.2495965\\
4 & $ 5.59555 \cdot 10^{16}$ & 0.002 & 246 & -823.1734963\\
5 & $ 1.46998 \cdot 10^{17}$ & 0.002 & 128 & -3064.925687\\
6 & $ 3.83405 \cdot 10^{17}$ & 0.002 & 64 & -10561.40925\\
7 & $ 9.93361 \cdot 10^{17}$ & 0.002 & 31 & -34212.60072\\
8 & $ 2.55779 \cdot 10^{17}$ & 0.002 & 14 & -105414.5993\\

%% \bottomrule
\end{tabular}
\end{center}
\end{table}
\end{lemma}\bigskip
\begin{proof} We start with the numerical upper estimation of $H^{IV}_{5.86,j,\pm}(x)$ for $5.72\leq x\leq 6$. In the general formula \eqref{eq:HIVgeneralestimateLARGE}\rev{\eqref{eq:HIVgeneralestimatewMmin}} now we consider the case $t=5.86$.
\begin{align}
|H^{IV}(x)|&\leq v^{1,86}M_1^4\Big\{ j(j-1)(j-2)(j-3)\ell^{j-4} + 117.44j(j-1)(j-2)\ell^{j-3}
\notag \\ & \qquad +111.5576j(j-1)\ell^{j-2} +309.72742j\ell^{j-1}
+ 314.40339216 \ell^j\Big\}
\notag \\& + 6\cdot v^{2.86}M_1^2 M_2 \Big\{ j(j-1)(j-2)\ell^{j-3}+14.58j(j-1)\ell^{j-2}
+69.8588j \ell^{j-1}
\notag \\&\qquad +109.931256\ell^j \Big\}
+ v^{4.86} M_4 \left\{j\ell^{j-1}+5.86\ell^j \right\}
\\& + v^{3.86} (3M_2^2+ 4M_1 M_3) \Big\{j(j-1)\ell^{j-2}+10.72j\ell^{j-1}+28.4796\ell^j \Big\} \notag .
\end{align}
Simply using the estimates \eqref{eq:Gpmadmnorm} of $M_1$, $M_2$, $M_3$ and $M_4$, together with applying that $\ell\leq \log(9)$ and $v\leq9$, we collect the resulting numerical estimates of $\|H^{IV}\|$ in Table \ref{table:HIVnormandNj586} and list the corresponding values of $N_j^{\star}$ and $\overline{d_j}$, too, as given by the formulas \eqref{eq:Njchoice586} and the numerical quadrature formula \eqref{eq:quadrature} with step size $h=0.001$, i.e. $N=N_j=500$ steps.\end{proof}
\begin{lemma}\label{l:dIVaround586} We have $d''(t)>0$ for all $5.72 \leq t \leq 6$.
\end{lemma}
\begin{proof} We approximate $d''(t)$ by the polynomial $P_{8}(t)$ constructed in \eqref{eq:Pndef586} as the approximate value of the order 8 Taylor polynomial of $d''$ around $t_0:=5.86$. As the error is at most $\de$, it suffices to show that $p(t):=P_{8}(t)-\de>0$ in $[5.72,6]$. Now $P_{8}(5.72)=-0.2607741259...$ so $P_{8}(5.72)+\de<0$. Moreover, $p'(t)=P_{8}'(t)=\sum_{j=1}^{8} \dfrac{\overline{d}_j}{(j-1)!} (t-5.86)^{j-1}$ and $p'(5.72)=-3.226759...<0$. From the explicit formula of $p(t)$ we consecutively compute also $p''(5.72)=-21.525764...<0$, $p'''(5.72)=-110.671188...<0$, $p^{(4)}(5.72)=-483.626484...<0$, $p^{(5)}(5.72)=-1873.40227...<0$, $p^{(6)}(5.72)=-6804.70822...<0$, $p^{(7)}(5.72)=-19454.5568...<0$. Finally, we arrive at $p^{(8)}(t)=\overline{d}_8$=-105414.6... We have already checked that $p^{(j)}(5.72)<0$ for $j=0\dots 7$, so in order to conclude $p(t)>0$ for $5.72\leq t\leq 6$ it suffices to show $p^{(8)}(t)<0$ in the given interval. However, $p^{(8)}$ is constant, so $p(t)<0$ for all $t \in \RR$.  It follows that also $p(t)>0$ for all $5.72 \leq t\leq 6$.
\end{proof}

\bigskip
\bigskip
\bigskip

\section{Final remarks}\label{sec:final}

\bigskip
\bigskip
\bigskip

%%%%%%%%%%%%%%%%%%%%%%%%%%%%%%%%%%%%%%%%%%%%%%%%%%%%%%%%%%
%%%%%%%%%%%         THE END ...     %%%%%%%%%%%%%%%%%%%%%%
%%%%%%%%%%%%%%%%%%%%%%%%%%%%%%%%%%%%%%%%%%%%%%%%%%%%%%%%%%
}
%%%%%%%%%%%%%%%%%%%%%%%%%%%%%%%%%%%%%%%%%%%%%%%%%%%%%%%%%%%
%%%%%%%%%%%%%%%%%%%%%%%%%%%%%%%%%%%%%%%%%%%%%%%%%%%%%%%%%%%
%%%%%%%%%%%%%%%%%%%%%%%%%%%%%%%%%%%%%%%%%%%%%%%%%%%%%%%%%%%

\end{document}